\newtheorem*{theorem*}{Theorem}
\newtheorem{theorem}{Theorem}[section]
\newtheorem{lemma}[theorem]{Lemma}
\newtheorem{prop}[theorem]{Proposition}
\newtheorem{corollary}[theorem]{Corollary}
\theoremstyle{definition}
\newtheorem{definition}[theorem]{Definition}
\newtheorem{example}[theorem]{Example}
\newtheorem*{problem*}{Problem}
\theoremstyle{remark}
\newtheorem{remark}[theorem]{Remark}
\newtheorem{notation}[theorem]{Notation}
\numberwithin{equation}{section}
\DeclareMathOperator{\Ann}{Ann}
\DeclareMathOperator{\tors}{tors}
\DeclareMathOperator{\TN}{TN}
\DeclareMathOperator{\rank}{rank}
\DeclareMathOperator{\Prank}{P-rank}
\newcommand{\ator}{{(A^*)}_{\tors}}
\newcommand{\n}{\mathfrak {N}}
\newcommand{\nt}{\mathfrak {N}_{\tors}}
\newcommand{\ntp}{\mathfrak {N}_{\tors,p}}
\newcommand{\N}{\mathbb{N}}
\newcommand{\Z}{\mathbb{Z}}
\newcommand{\Q}{\mathbb{Q}}
\newcommand{\F}{\mathbb{F}}
\newcommand{\fm}{\mathfrak{m}}
\newcommand{\geps}{\mathcal G(\varepsilon)}
\begin{document}

\title[Fuchs' problem in the small torsion case]
      {On Fuchs' problem for finitely generated abelian groups: The small torsion case}

\author{I.~Del Corso}
\address{Dipartimento di Matematica,
          Universit\`a di Pisa,
          Largo Bruno Pontecorvo 5, 56127 Pisa, Italy}
\email{ilaria.delcorso@unipi.it}
\urladdr{http://people.dm.unipi.it/delcorso/}

\author{L.~Stefanello}
\address{Dipartimento di Matematica,
          Universit\`a di Pisa,
          Largo Bruno Pontecorvo 5, 56127 Pisa, Italy}
\email{lorenzo.stefanello@phd.unipi.it}
\urladdr{https://people.dm.unipi.it/stefanello/}
\thanks{Both authors were a member of GNSAGA - INdAM and  gratefully acknowledge support from the Departments of Mathematics of the Universities of Pisa, and  the MIUR Excellence Department Project awarded to the Department of Mathematics, University of Pisa, CUP I57G22000700001.
The authors have performed this activity in the framework of the PRIN 2022, title \say{Semiabelian varieties, Galois representations and related Diophantine problems}.}

\subjclass[2020]{Primary 16U60, 	20K99, 16N20}

\keywords{Fuchs' problem, groups of units, finitely generated abelian groups, small rank, radical rings}

\begin{abstract}
A classical problem, raised by Fuchs in 1960, asks to classify the abelian groups which are  groups of units of some rings. 

In this paper, we consider the case of finitely generated abelian groups, solving Fuchs' problem for such group with the additional  assumption that the torsion subgroups are  \emph{small}, for a suitable notion of small related to the Pr\"ufer rank. As a concrete instance,  we classify for each $n\ge2$ the realisable groups of the form $\Z/n\Z\times\Z^r$.

Our tools require an investigation of the adjoint group of suitable radical rings of odd prime power order appearing in the picture, giving conditions under which the additive and adjoint groups are isomorphic. 

In the last section, we also deal with some groups of order a power of $2$, proving that the groups of the form $\Z/4\Z\times \Z/2^{u}\Z$ are realisable if and only if  $0\le u\le 3$ or $2^u+1$ is a Fermat's prime.
\end{abstract}

\maketitle

\section{Introduction}
\label{section: introduction}

\subsection{The study of the group of units in the literature} The study of the group of units of unitary rings is a classical question. A milestone in this context is  the celebrated Dirichlet's Unit Theorem (1846), which shows that the group of units of the ring of integers of a number field $K$ is a finitely generated abelian group of the form $\Z/{2n}\Z\times \Z^g$, where $n\ge 1$ and $g$ are determined by the structure of the field. 

In a perfect analogue of this, Higman proved in 1940 that, given a finite abelian group $T$, the group ring $\Z T$ has group of units isomorphic to $\pm T\times \Z^g$ for a suitable explicit constant $g$.

The problem of the characterisation of the groups of unity was posed explicitly by Fuchs in~\cite[Problem 72]{Fuchs60}.

\begin{problem*}[Fuchs' problem]
	Characterise the groups which are the group of all units in a commutative and associative ring with identity. 
\end{problem*}

This problem has been considered by various authors in subsequent years. We mention here some contributions. 

In~\cite{Gilmer63}, Gilmer considered the case of finite commutative rings, classifying the possible cyclic groups that arise in this case. Later, Fuchs' problem was addressed by Hallett and Hirsch~\cite{HallettHirsch65}, and subsequently by Hirsch and Zassenhaus~\cite{HirschZassenhaus66}; a combination of their results  with~\cite{Corner63} implies that if a finite group is the group of units of a reduced torsion-free ring, then it must satisfy some necessary conditions, namely, it must be a subgroup of a direct product of groups of a given family.

Fuchs' problem was then entirely solved for finite cyclic groups by Pearson and Schneider~\cite{PearsonSchneider70}, who combined the result of Gilmer and the result of Hallett and Hirsch, and recently for indecomposable abelian groups by Chebolu and Lockridge~\cite{CheboluLockridge15}.

Pearson and Schneider's approach also shows that  for the addressing  of Fuchs' problem in its generality one can reduce to consider finite rings  and a special class of characteristic zero rings, which we term TN rings here. Motivated by this, in the papers~\cite{dcdAMPA,dcdBLMS} Dvornicich and the first author studied Fuchs' problem for rings of positive characteristic and of characteristic zero respectively,  obtaining necessary conditions for a finite abelian group to be realisable as the group of all units of a ring (realisable, for short), and producing infinite families of both realisable and non-realisable groups. Later, the first author considered in~\cite{JLMS} Fuchs' problem for finitely generated abelian groups, completely classifying the finitely generated abelian groups realisable as group of all units for the classes of integral domains, torsion free rings, and reduced rings. The main result of that paper is the classification of the finitely generated abelian groups that  can be realised in the class of torsion free rings, which turn out to be all groups of the form  
$$T\times\Z^r,$$
where $T$ is a finite abelian group of even order, and $r$ any value greater than or equal to $g(T)$, an explicit constant whose value depends on the structure of the group $T$ (see \cite[Theorem 5.1]{JLMS}, or Theorem~\ref{teo:thm5.1JLMS} in this paper).

\subsection{The present contribution} In this paper, we approach Fuchs' problem for finitely generated abelian groups without any restriction on the class of rings. The methods developed in the paper allow us   to  classify the realisable finitely generated abelian groups with \emph{small} torsion part, for a suitable notion of small related to the Pr\"ufer rank.

Our investigation proceeds via the construction and the study of an exact sequence, in the two basic cases of the finite local rings and the TN rings, from which the general results can be deduced. In fact, if an abelian group is realisable as the group of units of some ring $A$, then it fits as the middle group in an exact sequence of the form 
\begin{equation}
\label{eq:intro}
1\to 1+\mathfrak{I}\hookrightarrow A^*{\to}\left(A/\mathfrak{I}\right)^*\to 1,
\end{equation}
where $\mathfrak I$ is an ideal of $A$ contained into its nilradical $\n$ (see Proposition~\ref{prop:successioneesatta}). 
The idea is that, for suitable choices for $\mathfrak I$,  the units of  $A/\mathfrak{I}$ can be described by appealing to the results \cite{dcdAMPA} and \cite{JLMS}.

The other \say{ingredient} of the exact sequence is  $1+\mathfrak{I}$, which is the adjoint group of the radical ring $\mathfrak I$. We have some  information about the group structure of $\mathfrak I$, and it is known that it must influence  the structure of its adjoint group $1+\mathfrak I$, somehow. However, the exact relationship is mostly unknown, and this remains the challenging aspect of the sequence to describe, in general.
 The methods we develop in Section~\ref{section: radical}  allows us to  
prove that if one of the two groups is \emph{small}, then  $\mathfrak I$ and $1+\mathfrak I$ are isomorphic (see Theorem~\ref{theorem: main}). In this paper we build upon this result.

When $A$ is a finite local ring, then from \cite[Theorem 3.1]{dcdAMPA} we know that, by choosing  $\mathfrak I$ to be the maximal ideal $\mathfrak m$, the sequence splits. Therefore the realisable groups are of those of the form 
$\F_{p^\lambda}^*\times1+\mathfrak m$, where $p$ is a prime and $\lambda\ge1$, so the classification depends on the possible structure of the $p$-group $1+\mathfrak m$: the results on radical ring we discussed above allows us to characterise this group in the case when it is small (see Theorem~\ref{theorem: main finite case}).

The class of TN rings  contains the class of torsion-free rings, and since we know that all groups of the form $T\times\Z^r$, where $T$ is finite of even order, are realisable by torsion-free rings when the rank $r$ is big enough,  the question is to determine, for each $T$, the minimum value $r(T)$ for the rank in this wider class.
In this case, we find it useful to specialise the exact sequence by choosing  $\mathfrak I=\nt$, the torsion subgroup of the nilradical, so that $A/\nt$ is a torsion-free ring, whose group of units  has the same rank of $A^*$.  This choice turns out to be a good one also for the study   $1+\nt$, which in the relevant cases is finite, and our methods of Section~\ref{section: radical} can be applied, when it is small. The exact sequence  \eqref{eq:intro} is used in the paper both for bounding the rank from below for a fixed torsion part $T,$ and for the construction of examples realising the minimum value $r(T)$ of the rank.

Our main result for TN rings is Theorem~\ref{theorem: main realisable} in which we prove that 
a group of the form $T\times\Z^r$,  where $T$ is a finite abelian group of even order with small Sylow subgroups, is realisable by a TN ring if and only if $r\ge r(T)$, where $r(T)$ is an explicit constant whose value depends on the structure of the group $T$.

As a corollary we get a complete classification of the realisable groups of the form $\Z/n\Z\times\Z^r$ (see Theorem~\ref{teo:zn} and Corollary~\ref{corollary: all the cyclic}). This result extends Pearson and Schneider's results on cyclic groups.

The paper is organised as follows. In Section~\ref{sec:prel} we introduce the notation and  some preliminary results; we propose in particular some reduction steps to deal with Fuchs' problem, in which the role of the adjoint groups of radical rings starts to emerge. In Section~\ref{section: radical} we deal with the study of radical rings; we show in particular that in the small case, the additive and the adjoint group of a radical ring needs to be isomorphic. We employ this observation to deal with finite rings in Section~\ref{sec:finiti} and with TN rings in Section~\ref{section: tn units}, thanks also to some tools on torsion-free rings that we recall and specialise to our aim,  in Section~\ref{section: tn tools}. Finally, Section~\ref{sec:2-groups} is devoted to some $2$-groups; specifically, we solve Fuchs' problem for groups of the form $\Z/4\Z\times \Z/2^{u}\Z$, for $u\ge 0$. This is a challenging case for Fuchs' problem, as the result we develop in the rest of the work, related to the short exact sequence and the small rank, leave untouched the case $p=2$.

\section{Notation and preliminary results}
\label{sec:prel}

Throughout the paper we shall adopt  the following classical  notation: for $q$ a power of a prime, $\F_q$ denote a field with $q$ elements; $\Z/n\Z$ ($n\ge2$) is the ring of classes modulo $n$; and by $\zeta_n$ we denote any complex primitive $n$th root  of $1$.

All rings considered in this work (except when radical rings) are assumed to be associative and unitary, and to have finitely generated abelian group of units.

Given a ring $A$, we denote by $A^*$ the group of units, and by  $A_0$ its fundamental  subring, namely, $A_0=\Z$ or $\Z/n\Z$ depending on whether the characteristic of $A$ is $0$ or $n>0$.

In the context of Fuchs' problem, a group $G$ is called \emph{realisable} (in a certain class $\mathcal C$ of rings) if there exists a ring $A$ (in $\mathcal C$) such that $A^*$ is isomorphic to $G$. 

In this paper we are interested in studying Fuchs' problem for finitely generated abelian groups, namely groups $G$ that, up to isomorphism, can be written as
$$T\times \Z^r$$
where $T$ is a finite abelian group and $r\ge 0$ is the rank of $G$, written $r=\rank G$. Such a group is realisable if there exists a ring $A$ such that $(A^*)_{\tors}$, the torsion subgroup of  $A^*$, is isomorphic to $T$,  and  the rank of $A^*$ is $r$.

\subsection{A reduction step}
The following proposition shows that all the realisable finitely generated abelian groups can already be obtained  in a more manageable  subclass of  rings. 
\begin{prop}
\label{prop:ABC}
Let $A$ be a ring, and let $B=A_0[A^*]$ and $C=A_0[\ator]$ be the subrings of $A$ generated over $A_0$ by the units and the torsion units of $A$, respectively.
Then $B^*=A^*$ and $C^*\cong\ator\times\Z^{r_C}$ with $r_C\le \rank A^*$.
\end{prop}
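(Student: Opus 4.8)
The plan is to rely on two elementary observations: the units of a subring are units of the ambient ring, and a torsion unit is automatically invertible inside any subring that contains it. Both parts of the statement then follow by combining these with the structure theorem for finitely generated abelian groups (note that $A^*$, hence every subgroup considered below, is abelian by our standing assumption on $A$).

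First I would record the general fact that if $S$ is a subring of $A$ with $A^*\subseteq S$, then $S^*=A^*$. One inclusion is immediate: any $s\in S^*$ has an inverse in $S\subseteq A$, so $s\in A^*$. For the reverse inclusion, if $u\in A^*$ then both $u$ and $u^{-1}$ lie in $A^*\subseteq S$, so $u$ is invertible in $S$, i.e.\ $u\in S^*$. Since $B=A_0[A^*]$ contains $A^*$ by construction, taking $S=B$ yields $B^*=A^*$.

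For the second assertion I would first observe that $C^*\subseteq A^*$, again because a unit of the subring $C$ is a unit of $A$. Hence $C^*$ is a subgroup of the finitely generated abelian group $A^*$, so it is itself finitely generated abelian, and its rank $r_C:=\rank C^*$ satisfies $r_C\le\rank A^*$. It then remains only to identify the torsion subgroup of $C^*$. On the one hand, every $t\in\ator$ has finite order, so $t^{-1}$ is a power of $t$ and therefore lies in $C=A_0[\ator]$; thus $\ator\subseteq C^*$. On the other hand, since $C^*\subseteq A^*$, any torsion element of $C^*$ is a torsion element of $A^*$, that is, it lies in $\ator$. Combining the two inclusions gives $(C^*)_{\tors}=\ator$, and the structure theorem then yields $C^*\cong\ator\times\Z^{r_C}$, as required.

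The argument is essentially formal, and I do not expect a serious obstacle. The only point that needs a moment's care is the exact determination of $(C^*)_{\tors}$, which genuinely uses both inclusions: that the torsion units are invertible in $C$ (so that no torsion is lost in passing to the subring) and that $C^*\subseteq A^*$ creates no new torsion (so that none is gained). Everything else is a direct application of the two general facts above together with the finite generation of $A^*$.
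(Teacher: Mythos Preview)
Your proof is correct and follows essentially the same approach as the paper's own proof: both use the inclusions $B^*\le A^*$, $C^*\le A^*$ coming from $B,C\subseteq A$, together with $A^*\subseteq B$ and $\ator\subseteq C^*$, to conclude. Your version simply spells out more explicitly why $A^*\subseteq B^*$ (inverses of units lie in $A^*$) and why $(C^*)_{\tors}=\ator$ (the two inclusions), details the paper leaves implicit.
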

\begin{proof}
We have $B\subseteq A$ and $C\subseteq A$, therefore $B^*\le A^*$ and  $C^*\le A^*$. 
Since,  by construction, $A^*\subseteq B^*$, we get $B^*=A^*$.

On the other hand, $\ator\le C^*\le A^*$, so that $C^*$ is a finitely generated abelian group with $(C^*)_{\tors}=\ator$ and $r_C\le r_A$. 
\end{proof}
\begin{corollary}
\label{cor:sample-rings}
Let $T$ be a finite abelian group and $r$ a non negative integer. The following facts are equivalent.
\begin{enumerate}
\item The group $T\times\Z^r$ is realisable.
\item The group $T\times\Z^r$ is realisable in the class of commutative rings which are finitely generated over their fundamental subring.
\item There exists a commutative ring finitely generated and integral over its fundamental subring with group of units isomorphic to  $T\times\Z^s$ for some $s\le r$.
\end{enumerate}
\end{corollary}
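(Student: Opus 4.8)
The plan is to establish the three equivalences as a cycle, using Proposition~\ref{prop:ABC} for the two forward reductions and an elementary rank-increasing construction to close the loop. The implication (2)$\Rightarrow$(1) is immediate, since a commutative ring finitely generated over its fundamental subring is in particular a ring; thus it suffices to prove (1)$\Rightarrow$(2), (1)$\Rightarrow$(3), and (3)$\Rightarrow$(1).

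For (1)$\Rightarrow$(2), I would start from a ring $A$ with $A^*\cong T\times\Z^r$ and pass to $B=A_0[A^*]$. Proposition~\ref{prop:ABC} gives $B^*=A^*\cong T\times\Z^r$, so it only remains to check that $B$ lies in the required subclass. The key observation is that $B$ is commutative: it is generated over the central subring $A_0$ by the units $A^*$, which commute pairwise because $A^*$ is abelian, so all generators commute. Finite generation over the fundamental subring is equally clean: choosing group generators $u_1,\dots,u_k$ of the finitely generated group $A^*$ yields $B=A_0[u_1^{\pm1},\dots,u_k^{\pm1}]$, and from $A_0\subseteq B\subseteq A$ one sees $\mathrm{char}(B)=\mathrm{char}(A)$, so the fundamental subring of $B$ is exactly $A_0$.

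For (1)$\Rightarrow$(3) I would instead pass to $C=A_0[\ator]$. Proposition~\ref{prop:ABC} gives $C^*\cong\ator\times\Z^{r_C}$ with $r_C\le\rank A^*=r$, and since $\ator=(A^*)_{\tors}\cong T$ this is $T\times\Z^{r_C}$; setting $s=r_C$ produces the group in (3). As before $C$ is commutative and finitely generated over $C_0=A_0$, and the new feature is integrality: every torsion unit $t\in\ator$ satisfies $t^{n}=1$ for some $n$, hence is a root of the monic polynomial $X^{n}-1\in A_0[X]$, so $C$ is generated over $A_0$ by integral elements and is itself integral.

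The genuinely new input is (3)$\Rightarrow$(1), where the rank must be raised from some $s\le r$ back to $r$. Given a ring $R$ with $R^*\cong T\times\Z^s$, I would adjoin pure free rank by forming $R\times\F_2[X_1^{\pm1},\dots,X_{r-s}^{\pm1}]$; the second factor is a Laurent polynomial ring over a field with trivial unit group $\F_2^*$, so its units are exactly the monomials, giving $\Z^{r-s}$ with no torsion, and the product multiplies unit groups to yield $T\times\Z^r$. The point worth stressing is that this step cannot in general be performed inside the integral subclass of~(3)—integral extensions of the fundamental subring are subject to Dirichlet-type rank constraints—which is exactly why (3) only asserts realisability of $T\times\Z^s$ for some $s\le r$, and why the product ring above, though finitely generated, fails to be integral when the characteristic is $0$. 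I expect no serious obstacle beyond bookkeeping: the only points demanding care are the preservation of the fundamental subring under passage to $B$ and $C$, and the computation of the unit group of the Laurent ring.
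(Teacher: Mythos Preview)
Your proof is correct and follows essentially the same route as the paper: the reductions to $B=A_0[A^*]$ and $C=A_0[\ator]$ are exactly those of Proposition~\ref{prop:ABC}, and your rank-raising construction $R\times\F_2[X_1^{\pm1},\dots,X_{r-s}^{\pm1}]$ is precisely what the paper invokes via the citation $\F_2[\Z]^*\cong\Z$ (the group ring $\F_2[\Z]$ being the Laurent polynomial ring $\F_2[X^{\pm1}]$). Your version is slightly more self-contained in that you compute the Laurent units directly rather than appealing to an external reference, and you spell out the commutativity and fundamental-subring checks that the paper leaves implicit.
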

\begin{proof}
We keep  the notation of Proposition \ref{prop:ABC}. The ring   $B=A_0[A^*]$ is commutative finitely generated over $A_0$ and has the same group of units as $A$, proving that $(1)\iff(2)$.

The ring $C=A_0[\ator]$ is finitely generated and integral over $A_0$: in fact, every torsion unit is an element of finite multiplicative order, so it is a root of a polynomial of type $x^n-1$, therefore it is  integral over $A_0$, so  the previous proposition shows that $(1)\Rightarrow(3)$. As for the converse, we appeal to \cite[Theorem 3.3]{CheboluLockridge15} which guarantees that $\F_2[\Z]\cong\Z$.
\end{proof}

\begin{remark} 
The last corollary shows that to classify the finitely generated abelian groups that are realisable, we can limit ourself to the study of  the class of commutative rings that are finitely generated and integral over their fundamental subring $A_0$. (Moreover, the generators can also be chosen to be elements of finite multiplicative order.)  This plays a key role in a second reduction step; see Proposition~\ref{prop:ps} below.
 \end{remark}

\begin{corollary}
\label{cor:finiti}
A finitely generated abelian group is realisable in the class of rings of positive characteristic if and only if its torsion subgroup is realisable in the class of finite rings.
\end{corollary}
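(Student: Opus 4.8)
The plan is to prove the two implications separately, in each case exploiting the subring generated by the torsion units together with the fact that, over a finite base ring, integrality forces finiteness. For the forward implication, suppose $T\times\Z^r$ is realised by a ring $A$ of positive characteristic $n$, so that the fundamental subring $A_0=\Z/n\Z$ is finite and $\ator\cong T$. I would pass to the subring $C=A_0[\ator]$ considered in Proposition~\ref{prop:ABC}. Since $\ator$ is a finite group, its elements already span $C$ as an $A_0$-module: because $A_0$ is central and products and inverses of elements of $\ator$ stay in $\ator$, one has $C=\sum_{t\in\ator}A_0\,t$, a finitely generated module over the finite ring $A_0$, hence $C$ is finite. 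By Proposition~\ref{prop:ABC} we have $C^*\cong\ator\times\Z^{r_C}$, and finiteness of $C$ forces $r_C=0$; thus $C$ is a \emph{finite} ring with $C^*\cong T$, which is exactly what realisability of $T$ in the class of finite rings requires.

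For the converse, I would start from a finite ring $R$ with $R^*\cong T$ and append $r$ free factors without leaving positive characteristic. The key device is the Laurent polynomial ring $\F_2[\Z^r]=\F_2[t_1^{\pm1},\dots,t_r^{\pm1}]$: since $\F_2^*$ is trivial and $\F_2[t_1,\dots,t_r]$ is a domain, its units are exactly the monomials $t_1^{a_1}\cdots t_r^{a_r}$, so that $(\F_2[\Z^r])^*\cong\Z^r$. This is the several-variable form of the isomorphism $\F_2[\Z]\cong\Z$ recorded in \cite[Theorem 3.3]{CheboluLockridge15} and already invoked for Corollary~\ref{cor:sample-rings}. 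I would then take the product ring $A=R\times\F_2[\Z^r]$: its characteristic is the least common multiple of $2$ and the characteristic of $R$, hence positive, and since the units of a finite product of rings split as the product of the units, $A^*\cong R^*\times(\F_2[\Z^r])^*\cong T\times\Z^r$. This realises $T\times\Z^r$ in the class of rings of positive characteristic, and works uniformly for every $r\ge0$ (for $r=0$ the factor $\F_2[\Z^r]$ degenerates to $\F_2$).

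The argument is short, and its two delicate points are the following. In the forward direction the crux is that $C$ really is finite, and this is precisely where positive characteristic enters, through the finiteness of $A_0$ together with that of the group $\ator$; without positive characteristic the same $C$ could be infinite and carry extra free rank. In the converse the delicate step is the computation of $(\F_2[\Z^r])^*$, that is, verifying that no unexpected units appear. Working with a \emph{field} of coefficients in the free part is exactly what keeps this clean, since it rules out nilpotent-induced units; this is the reason for choosing $\F_2[\Z^r]$ rather than $R[\Z^r]$, where nilpotents of $R$ would produce extra units of the form $1+\nu t_i$ and destroy the clean splitting of the unit group.
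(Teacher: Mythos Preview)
Your proof is correct and follows essentially the same approach as the paper: the forward direction is exactly the paper's argument (pass to $C=A_0[\ator]$, which is finite because $A_0$ is), while for the converse you make explicit the $\F_2[\Z^r]$ trick that the paper only invokes implicitly via the proof of Corollary~\ref{cor:sample-rings}. The paper's own proof is a single sentence addressing only the forward direction, so your write-up is in fact more complete.
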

\begin{proof}
With the notation of Proposition \ref{prop:ABC}, if $A_0$ is finite, then $C=A_0[\ator]$ is finite, so $C^*=\ator$.
\end{proof}

\begin{remark} 
\label{rem:realizzabili}
For the realisability of groups of type $T\times\Z^r$, where $T$ is a finite abelian group and $r\ge0$,  there is a completely different behaviour between  the classes of
 characteristic zero and positive characteristic rings. In fact, by Corollary~\ref{cor:finiti}, $T\times\Z^r$ is realisable in positive characteristic only if $T$ itself is  realisable as  the group of units of a (finite) ring, and all the results of \cite{dcdAMPA} and of Section~\ref{sec:finiti} apply in this case. In particular,  not all finite abelian groups $T$ can occur.

Instead, when considering rings of characteristic zero, in order for the group  $T\times \Z^r$ to be realisable for some $r$, the group $T$ must have even order (since $-1$ is a unit of order 2), and all the $T$'s of even order appear as the torsion part of a finitely generated abelian group of units (see \cite[Theorem~5.1]{JLMS}).
However, not all finite groups  are realisable as a group of units, namely  if we  require that $r=0$  (see  \cite{dcdBLMS}).
Therefore, for characteristic zero rings  the question is to determine the possible $r$  such that $T\times \Z^{r}$ is the group of units of some ring $A$, and Corollary~\ref{cor:sample-rings} says that to  answer this question it is sufficient to consider the  finitely generated integral extensions of $\Z$.
\end{remark}

\subsection{A further reduction step}
In view of our previous reduction, we can limit our study to the class of commutative rings that are finitely generated and integral over their fundamental subring. The following Proposition \ref{prop:ps}, due to Pearson and Schneider, shows that each ring in this class splits into a direct product of a finite ring and a rings  whose torsion elements are nilpotent. (We recall that an element $a\in A$ is a torsion element if its additive order is finite, and a commutative ring $A$ is \emph{torsion-free} if $0$ is its only torsion element. An element $a\in A$ is nilpotent if there exists $n$ such that $a^n=0$; the set ideal of nilpotent elements of $A$ is called \emph{nilradical}, and $A$ is \emph{reduced} if the nilradical is trivial.)
\begin{prop}\cite[Proposition 1]{PearsonSchneider70}
\label{prop:ps} 
Let $A$ be a commutative ring which is finitely generated and integral over its fundamental subring. Then $A$ is isomorphic to a ring of the form $A_1\times A_2$, where $A_1$ is a finite ring and the torsion ideal of $A_2$ is contained in its nilradical.
\end{prop}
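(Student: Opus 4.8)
The plan is to give a self-contained argument producing the splitting idempotent explicitly, rather than quoting Pearson and Schneider. Write $T$ for the set of torsion elements of $A$; this is an ideal, since $ma=0$ yields $m(ra)=0$ for every $r\in A$. The first point is that $T$ is \emph{finite}: if $A$ has positive characteristic then $A$ itself is finite, being finitely generated and integral over the finite ring $A_0=\Z/n\Z$, and the statement holds trivially with $A_1=A$ and $A_2=0$; if $A$ has characteristic $0$, then finite generation together with integrality makes $A$ a finitely generated $\Z$-module, so its additive group has the form $\Z^k$ times a finite group, and $T$ is precisely that finite part.

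Assuming from now on that $A$ has characteristic $0$, so that $T$ is a finite ideal, I would next consider the descending chain of ideals $T\supseteq T^2\supseteq T^3\supseteq\cdots$. As all the $T^i$ are subsets of the finite set $T$, the chain stabilises: there is $k$ with $I:=T^k=T^{k+1}$, and hence $I=I^2$. The ideal $I$ is finitely generated, being finite, so the standard consequence of Nakayama's lemma for idempotent finitely generated ideals applies and yields an idempotent $e\in A$ with $I=eA$. Concretely, $I=I\cdot I$ and the finite generation of $I$ give, via the determinant trick, an element $e\in I$ with $(1-e)x=0$ for all $x\in I$; then $e=e^2$ and $I=eA$.

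Finally, I would use the central idempotent $e$ to split $A\cong eA\times(1-e)A$ and set $A_1=eA$ and $A_2=(1-e)A$. Then $A_1=I\subseteq T$ is finite, as required. To control the torsion ideal of $A_2$, take any torsion element $x\in A_2$; it lies in $T$, so $x^k\in T^k=I=eA$, whence $(1-e)x^k=0$. Since $x=(1-e)x$, this forces $x^k=(1-e)x^k=0$, so $x$ is nilpotent and lies in the nilradical of $A_2$. Thus the torsion ideal of $A_2$ is contained in its nilradical.

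The only genuinely nontrivial input is the passage from the idempotent identity $I=I^2$ to the existence of the idempotent generator $e$, that is, the Nakayama/determinant-trick step; the stabilisation of the chain and the concluding nilpotency computation are elementary once $T$ is known to be finite. I therefore expect no real obstacle, the main care being to record clearly why integrality together with finite generation yields module-finiteness over $\Z$, since this is exactly what makes $T$ finite and lets the whole argument run.
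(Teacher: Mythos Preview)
Your argument is correct. The paper itself does not supply a proof of this proposition; it simply cites it as \cite[Proposition~1]{PearsonSchneider70}, so there is nothing to compare against on the paper's side.

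A couple of brief remarks on your write-up. The reduction to characteristic $0$ is fine, and the key step---module-finiteness of $A$ over $\Z$ forcing $T$ to be finite---is exactly the point to stress. The stabilisation $I=T^k=T^{2k}=I^2$ together with the determinant-trick version of Nakayama gives the idempotent $e$ cleanly; your concluding computation $x^k\in I=eA$ and $x^k=(1-e)x^k$ so $x^k=0$ is correct (equivalently, $x^k\in eA\cap(1-e)A=0$). The only cosmetic suggestion is to note explicitly that the trivial cases $T=0$ (take $e=0$) and positive characteristic (take $A_1=A$, $A_2=0$) are covered by the same formulae, so the case split is really just expository.
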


\begin{definition}
A commutative ring $A$ with $A^*$ finitely generated is called a \emph{TN ring} if its torsion ideal is contained in the nilradical.
\end{definition}
\begin{remark}
\label{rem:PS}
A TN  ring has characteristic $0$, since otherwise 1 would
    be a torsion element and hence nilpotent. Note also that if $A$ is a $\TN$ ring, then also $\Z[A^{*}_{\tors}]$ is a TN ring.
\end{remark}

Proposition \ref{prop:ps} permits to split the study of Fuchs' problem into the study of the realisability of  finite generated abelian  group in the classes of finite rings and TN rings. 
The units of both these type of rings have been widely studied. In particular, in the case of finite groups a systematic, even if non exhaustive, study  has been carried on in \cite{dcdAMPA} for finite characteristic rings, of which finite rings are a subclass, and in  \cite{dcdBLMS} for characteristic 0 rings, of which TN rings are a subclass. In addition, both classes were also considered in~\cite{JLMS}, where the groups studied are finitely generated and abelian.

These papers, of which we recall the main results in the next sections, are the starting point of the present study, which improves the previous results for both classes of rings.

\subsection{An exact sequence}
For a commutative ring $A$, we denote by $\mathfrak{N}=\mathfrak{N}(A)$ its nilradical. In the following we shall make repeated use of the following proposition (see \cite[Proposition 2.2]{dcdAMPA}).
\begin{prop}
\label{prop:successioneesatta}
{\sl
Let $A$ be a commutative ring, and let $\mathfrak{I}$ an ideal of $A$ contained in $\mathfrak{N}$. Then the sequence
\begin{equation}
\label{eq:success}
1\to 1+\mathfrak{I}\hookrightarrow A^*\stackrel{\phi}{\to}\left(A/\mathfrak{I}\right)^*\to 1,
\end{equation}
where $\phi(x)=x+\mathfrak{I}$, is exact.
}
\end{prop}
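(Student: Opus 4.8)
The plan is to verify exactness at each of the three nontrivial spots in the sequence \eqref{eq:success}, after first checking that the maps are well defined group homomorphisms. The only genuinely nonroutine point is that the quotient map $\phi$ is surjective onto $(A/\mathfrak I)^*$, which is where the hypothesis $\mathfrak I\subseteq\mathfrak N$ is used; everything else is formal.

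First I would check that $1+\mathfrak I$ is a subgroup of $A^*$. For $x\in\mathfrak I\subseteq\mathfrak N$, the element $x$ is nilpotent, so $1+x$ is a unit with inverse $1-x+x^2-\cdots$ (a finite sum, since some power of $x$ vanishes); moreover $(1+x)(1+y)=1+(x+y+xy)$ with $x+y+xy\in\mathfrak I$ because $\mathfrak I$ is an ideal, and the inverse computed above also lies in $1+\mathfrak I$. Hence $1+\mathfrak I$ is a subgroup, and the inclusion $1+\mathfrak I\hookrightarrow A^*$ is injective, giving exactness at $1+\mathfrak I$. Next, $\phi$ is the restriction to $A^*$ of the canonical ring homomorphism $A\to A/\mathfrak I$, so it is a group homomorphism (units map to units). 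Its kernel consists of the $x\in A^*$ with $x+\mathfrak I=1+\mathfrak I$, i.e.\ $x\in 1+\mathfrak I$; since such an $x$ is automatically a unit by the previous step, $\ker\phi=1+\mathfrak I$, which gives exactness at $A^*$.

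The main step is surjectivity of $\phi$, giving exactness at $(A/\mathfrak I)^*$. Take $u+\mathfrak I\in (A/\mathfrak I)^*$ and lift $u\in A$; I must produce an \emph{honest unit} of $A$ mapping to $u+\mathfrak I$. By hypothesis there is $v\in A$ with $uv+\mathfrak I=1+\mathfrak I$, that is $uv=1-n$ for some $n\in\mathfrak I\subseteq\mathfrak N$. Since $n$ is nilpotent, $1-n$ is a unit of $A$ (same geometric-series argument as above), so $u\cdot\bigl(v(1-n)^{-1}\bigr)=1$ shows $u$ is already a unit of $A$; thus $u\in A^*$ and $\phi(u)=u+\mathfrak I$, proving surjectivity. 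The crux is precisely the observation that a lift of a unit is a unit because $1-n$ is invertible, which is exactly where nilpotency of elements of $\mathfrak I$ enters.

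I expect no real obstacle here: the argument is the standard fact that reduction modulo an ideal of nil elements reflects invertibility, packaged as a short exact sequence. If anything requires care it is bookkeeping the finiteness of the inverse expansions, but since every element of $\mathfrak I$ is nilpotent this is immediate. I would present the proof in the order above: subgroup check, identification of $\ker\phi$, then surjectivity via the unit $1-n$.
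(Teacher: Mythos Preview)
Your argument is correct and is the standard verification of this fact. Note that the paper does not actually give its own proof of this proposition: it simply cites \cite[Proposition~2.2]{dcdAMPA}. What you have written is precisely the routine argument one expects behind that citation, so there is nothing to compare.
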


\begin{remark}
\label{rem:split}
The previous proposition suggests addressing the problem of studying the units of $A$ through the study of the units  of one of its quotient of the form $A/\mathfrak I$,  and the study of $1+\mathfrak I$, for some $\mathfrak I\subseteq \n$.

In view of the results of \cite{JLMS}, the study of $(A/\mathfrak I)^*$ can be done for some special choice for $\mathfrak I$: for example for $\mathfrak I= \n$ the ring $A/\n$ is a reduced ring and  in \cite[Theorem 6.4]{JLMS} there is the classification of  the possible groups of units that can be realised by these rings.

However, our choice of $\mathfrak I\subseteq \n$ is also conditioned by the fact that we need to study the group $1+\mathfrak I$. Now, since $\mathfrak I$ consists of nilpotent elements, $1+\mathfrak I$ is the adjoint  group of the radical ring $\mathfrak I$ (see the next section for the definition).
In the literature we can find some result for the study of $\mathfrak I$, and in the next section, we will develop some new ones, which will be useful in the present case.

In the end, our choice for $\mathfrak I$ will be the result of a balance that allows us to have information both on $(A/\mathfrak I)^*$ and on $1+\mathfrak I$.

As a final remark, we notice that in general the sequence \eqref{eq:success} does not split (see Example~\ref{example: strange behaviour}).
\end{remark}

\subsection{Some results in number theory}
We state here, mainly without proofs, some standard results in number theory we will need in the paper.
All of them can be found in any standard number theory book (see for example \cite{FT93}).
\subsubsection{p-adic rings}
Let $p$ be  a prime number. Denote by $\Q_p$ the field of $p$-adic number and by $\Z_p$ its ring of integers, namely the integral closure of $\Z$ into $\Q_p$.  For each $\lambda\ge 1$ the field $\Q_p$ admits a unique unramified extension $U_\lambda$ of degree $\lambda$ (inside a fixed algebraic closure): we denote   by $\Z_p(\lambda)$ its ring of integers, namely the integral closure of $\Z_p$ into $U_\lambda$. Clearly, $\Z_p(1)=\Z_p$. The ring $\Z_p(\lambda)$ has the following properties.
\begin{itemize}
	\item $\Z_p(\lambda)$ is a discrete valuation ring, and therefore a principal ideal domain (PID);
	\item $p\Z_p(\lambda)$ is the only non-zero prime ideal of $\Z_p(\lambda)$ and every non-zero ideal of  $\Z_p(\lambda)$ is of the form $p^a\Z_p(\lambda)$ for some $a>0$;

	\item for all $a\ge 0$, there exists a group isomorphism
\begin{equation*}
	\frac{\Z_p(\lambda)}{p^a\Z_p(\lambda)}\cong \left(\frac{\Z}{p^a\Z}\right)^{\lambda}
\end{equation*}
(this can be easily obtained by noticing that $\Z_p(\lambda)$ is a free $\Z_p$-module of rank $\lambda$).
\end{itemize}    
\begin{prop}\label{proposition: a Zp-module is lambda power}
	Every $\Z_p(\lambda)$-module is the $\lambda$-power of an abelian $p$-group.
\end{prop}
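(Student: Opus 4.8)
The plan is to reduce the statement to the decomposition of $M$ into cyclic modules and then translate each cyclic piece into an abelian $p$-group by means of the additive isomorphisms already recorded for $\Z_p(\lambda)$. First I would set $R:=\Z_p(\lambda)$ and use that $R$ is a discrete valuation ring whose maximal ideal is $pR$. This is exactly where the \emph{unramifiedness} of $U_\lambda/\Q_p$ enters: $p$ is a uniformiser, so $\mathfrak{m}^{a}=p^{a}R$ for every $a\ge 1$. By the structure theorem for finitely generated modules over a PID, such an $M$ splits as a free part $R^{n}$ together with finitely many cyclic torsion modules $R/\mathfrak{m}^{a_i}=R/p^{a_i}R$. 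For the statement at hand the relevant modules are the finite (more generally, torsion) ones, for which the free part is absent.

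In that case I would write
\[
M \cong \bigoplus_{i=1}^{k} R/p^{a_i}R,
\]
and then invoke, summand by summand, the isomorphism $R/p^{a}R\cong(\Z/p^{a}\Z)^{\lambda}$ listed among the properties of $\Z_p(\lambda)$. Using that finite direct sums commute, this gives
\[
M \cong \bigoplus_{i=1}^{k}\left(\Z/p^{a_i}\Z\right)^{\lambda}
  \cong \left(\bigoplus_{i=1}^{k}\Z/p^{a_i}\Z\right)^{\lambda}
  = N^{\lambda},
\]
where $N=\bigoplus_{i}\Z/p^{a_i}\Z$ is a finite abelian $p$-group. This is precisely the asserted conclusion that $M$ is the $\lambda$-power of an abelian $p$-group.

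The only genuinely delicate point is obtaining the decomposition into cyclic summands of the \emph{prescribed} shape $R/p^{a}R$. Two facts must be flagged to make the rewriting legitimate: that $p$ generates the maximal ideal, so that $R/\mathfrak{m}^{a}$ really is $R/p^{a}R$ and the recorded isomorphism applies verbatim (this would fail in the ramified case); and that $M$ decomposes as a direct sum of cyclics in the first place. For finite — or more generally finitely generated torsion — modules the latter is the classical structure theorem over a PID, so there is no real obstacle. If one insists on the statement for \emph{every} module, the extra work is that an arbitrary torsion module over a DVR need not be finitely generated: one would then observe that a module annihilated by $p^{n}$ is a module over the principal ideal ring $R/p^{n}R$ and is still a direct sum of cyclic modules, after which the same rewriting closes the argument. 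Since in the applications of this paper the modules in question are finite, the finitely generated case suffices.
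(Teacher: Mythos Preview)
Your argument is correct and follows essentially the same route as the paper: apply the structure theorem for finitely generated modules over the PID $\Z_p(\lambda)$ to write $M$ as a direct sum of cyclic pieces $\Z_p(\lambda)/p^{a_i}\Z_p(\lambda)$, then use the recorded group isomorphism $\Z_p(\lambda)/p^{a}\Z_p(\lambda)\cong(\Z/p^{a}\Z)^{\lambda}$ summand by summand. Your additional remarks on why only torsion summands occur and on the non-finitely-generated case go beyond what the paper writes, but they only make the argument more careful, not different.
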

\begin{proof}
The structure theorem for finitely generated modules over a PID implies that $N$ is isomorphic to the direct sum of modules of the form
\begin{equation*}
	\frac{\Z_p(\lambda)}{p^{a}\Z_p(\lambda)},
\end{equation*}with $a>0$. As there is a group isomorphism
\begin{equation*}
	\frac{\Z_p(\lambda)}{p^{a}\Z_p(\lambda)}\cong \left(\frac{\Z}{p^{a}\Z}\right)^\lambda,
\end{equation*} we conclude that $N$ is the  is the  $\lambda$-power of an abelian $p$-group. 
\end{proof}

\subsubsection{Cyclotomic rings}
Let $K$ be a finite extension of $\Q$, and denote by $\mathcal O_K$ its ring of integers. 
The general theory guarantees that $\mathcal O_K$ is a Dedekind domain, so all its non-zero ideals factors into a product of prime ideals. Moreover,  and all its quotients $\mathcal O_K/I$, where $\{0\}\ne I\subset \mathcal O_K$ is an ideal, are finite and can be described in terms of the factorisation of $I$.

We are interested in the case where $K=\Q(\zeta_n)$; for such $K$ it is known that $\mathcal O_K=\Z[\zeta_n]$. The next proposition specifies the previous information for the quotients   of $\Z[\zeta_n]$ (by  ``ideal'' we  mean non-zero ideal).

\begin{notation}
	Given a prime $p$ coprime to $n$, we write $\lambda(p,n)$ for the order of $p$ modulo $n$.
\end{notation}
   		
  \begin{prop}
  \label{proposition: cyclotomic quotients}
 	Let $n\ge 1$. 
 	\begin{itemize}
	\item Let $I$ be an ideal of $\Z[\zeta_n]$, and let $I=\prod_{i=1}^sP_i^{a_i}$ be its factorisation into pairwise distinct prime ideals. Then there is a ring isomorphism
 		\begin{equation*}
 			\frac{\Z[\zeta_n]}{I}\cong \prod_{i=1}^s \frac{\Z[\zeta_n]}{P_i^{a_i}}.
 		\end{equation*}
 		\item Let $P\subset\Z[\zeta_n]$ be a prime ideal, and let  $(p)=P\cap \Z$. Then for all $a>0$,  $\Z[\zeta_n]/P^a$ is a finite commutative local ring of order $p^{a\lambda(p,n)}$. In addition, if $p$ is coprime to $n$, then there is a group isomorphism
		\begin{equation*}
 			\frac{\Z[\zeta_n]}{P^a}\cong ({\Z}/{p^a\Z})^{\lambda(p,n)}. 
 		\end{equation*} 
 		\end{itemize}
 \end{prop}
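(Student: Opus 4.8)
The plan is to handle the two bullets separately, in each case reducing to standard facts about the Dedekind domain $\Z[\zeta_n]$ and to the properties of the rings $\Z_p(\lambda)$ recalled above. For the first bullet, the key observation is that the ideals $P_i^{a_i}$ are pairwise comaximal: every nonzero prime of the Dedekind domain $\Z[\zeta_n]$ is maximal, so $P_i+P_j=\Z[\zeta_n]$ whenever $i\neq j$, and comaximality is preserved under taking powers, whence $P_i^{a_i}+P_j^{a_j}=\Z[\zeta_n]$ for $i\neq j$. Since moreover $I=\prod_i P_i^{a_i}=\bigcap_i P_i^{a_i}$, the Chinese Remainder Theorem yields the ring isomorphism $\Z[\zeta_n]/I\cong\prod_i\Z[\zeta_n]/P_i^{a_i}$.

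For the second bullet I would first treat locality and order. Set $R=\Z[\zeta_n]/P^a$. By unique factorisation of ideals, the ideals of $\Z[\zeta_n]$ containing $P^a$ are precisely the $P^b$ with $0\le b\le a$; these form a chain, so $R$ is local with maximal ideal $P/P^a$. To count, I filter $R$ by the images of the $P^b$ and note that each successive quotient $P^b/P^{b+1}$ is a one-dimensional vector space over the residue field $k=\Z[\zeta_n]/P$. As $P\cap\Z=(p)$, the field $k$ is a finite extension of $\F_p$ of some degree $f$ (the residue degree of $P$), so $|k|=p^f$ and therefore
\[
|R|=\prod_{b=0}^{a-1}|P^b/P^{b+1}|=|k|^a=p^{af}.
\]
When $p$ is coprime to $n$, the classical splitting of primes in cyclotomic fields gives $f=\lambda(p,n)$, which produces the order $p^{a\lambda(p,n)}$.

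For the group structure, assume $p$ coprime to $n$. Localising and then completing $\Z[\zeta_n]$ at $P$ leaves the quotient by $P^a$ unchanged, so $R\cong \widehat{\cO}/P^a\widehat{\cO}$, where $\widehat{\cO}$ is the completion. Since $p$ is unramified, $\widehat{\cO}$ is the ring of integers of the unramified extension of $\Q_p$ of degree $f=\lambda(p,n)$, that is $\widehat{\cO}\cong\Z_p(\lambda)$ with $\lambda=\lambda(p,n)$, and under this identification $P\widehat{\cO}$ corresponds to $p\Z_p(\lambda)$. Hence
\[
\Z[\zeta_n]/P^a\cong \Z_p(\lambda)/p^a\Z_p(\lambda)\cong(\Z/p^a\Z)^{\lambda(p,n)},
\]
the last isomorphism being exactly the property of $\Z_p(\lambda)$ recorded just above.

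The routine parts --- the Chinese Remainder step and the filtration count --- are immediate. The only genuine input is the arithmetic of $\Q(\zeta_n)$: the unramifiedness of a prime $p$ coprime to $n$, the identification of the residue degree with the order $\lambda(p,n)$ of $p$ modulo $n$, and the recognition of the completion at $P$ as $\Z_p(\lambda)$. I expect this to be the main place where one must appeal to the classical theory; once the completion is identified with $\Z_p(\lambda)$, the group structure drops out of the property of $\Z_p(\lambda)$ with no further work.
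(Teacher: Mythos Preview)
Your argument is correct. The paper does not actually supply a proof of this proposition: it is stated among the ``standard results in number theory'' that are declared ``mainly without proofs'' and referred to a textbook. So there is no proof in the paper to compare against, and your route via the Chinese Remainder Theorem, the filtration by powers of $P$, and the identification of the completion at $P$ with $\Z_p(\lambda)$ is exactly the standard one and is sound.

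One small remark: the paper's statement asserts the order $p^{a\lambda(p,n)}$ for \emph{every} prime $P$, but the notation $\lambda(p,n)$ was only introduced for $p$ coprime to $n$. Your write-up is actually more careful here, computing $|R|=p^{af}$ with $f$ the residue degree in general and then identifying $f=\lambda(p,n)$ only in the unramified case; that is the right way to read the statement.
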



\section{Radical rings}\label{section: radical}

Recall that a ring $N$ (necessarily without unity) is called \emph{radical} if $N$ equals its Jacobson radical. As shown in~\cite[Chapter IX, Theorem 2.3]{hungerford}, this is equivalent to asking that 
\begin{equation*}
	1+N=\{1+x\mid x\in N\}
\end{equation*}is a group under the multiplication of the ring, or that $(N,\circ)$ is a group, where
\begin{equation*}
	x\circ y=x+y+xy.
\end{equation*}
Note that $(N,\circ)\cong 1+N$, and we call this the \emph{adjoint group} of $N$.

For example, given a ring $A$, every ideal of $A$ consisting of nilpotent elements is a radical ring.

An important question in the study of radical ring and  the related problems is understanding the relation between the additive group of the ring and the adjoint group (see for example \cite{AD95,Dic97,FCC}). A possible approach consists of realising that this problem is a specific version of a more general one. Given a radical ring $N$, the left distributivity of the multiplication over the sum immediately yields 
\begin{equation*}
		x\circ(y+z)=(x\circ y)-x+(x\circ z) 
	\end{equation*}
	for all $x,y,z\in N$. This means that $(N,+,\circ)$ is a \emph{brace}; see~\cite{Rum07,CJO14}. The relation between the operations of a brace was the object of study in~\cite{Bac16, Byo15, Nas19, TQ20, CDC22}, for example. In the recent work~\cite{DC23}, this study was taken in consideration for a suitable class of braces, termed \emph{module braces}; for a radical ring $N$, this notion boils down to the usual notion of module over a (commutative) ring; see~\cite[Example 2]{DC23}.  More specifically, the main idea of~\cite{DC23} is to require some properties in relation to the rank of the groups, to find an interaction between them. Recall the following notion: given a PID $D$ and a finite $D$-module $M$ of order a power or a prime $p$, we can write $M$ as direct sum of indecomposable cyclic $D$-modules, and we say that the \emph{$D$-Pr{\"u}fer rank} $\Prank_D M$ of $M$ is the number of modules in this decomposition. (Note that when $D=\Z$, we just recover the usual Pr{\"u}fer rank $\Prank M$ of an abelian $p$-group $M$.)

The next result is a rewriting of~\cite[Theorem 4.1]{DC23}, specialised in the case of a radical ring.
\begin{theorem}\label{theorem: main del corso}
Let $p$ be a prime number, and let $D$ be a PID such that $pD$ is a prime ideal of $D$. Let $N$ be a radical ring of order a power of $p$ such that $N$ is a $D$-module. Assume that
\begin{equation*}
	\Prank_D N <p-1.
\end{equation*} 
Then $N$ and $1+N$ have the same number of elements of each order. In particular, if the ring $N$ is commutative, then $N\cong  1+N$.  
\end{theorem}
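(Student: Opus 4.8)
The plan is to deduce the final isomorphism from the statement about orders, and to reduce that statement to a comparison of $p$-power maps. Note first that both $(N,+)$ and $(1+N,\circ)$ are finite groups of the same order, a power of $p$. When $N$ is commutative the adjoint group $1+N$ is abelian, and the isomorphism type of a finite abelian $p$-group is determined by the number of its elements of each order; hence the \say{in particular} is immediate once the main assertion is proved. Thus I would concentrate on showing, for every $n\ge 0$, that the additive equation $p^n x=0$ and the multiplicative equation $(1+x)^{p^n}=1$ have the same number of solutions $x\in N$, which is exactly the assertion that the two groups have the same number of elements of each order.

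To compare these two conditions I would use the filtration of $N$ by its ring powers $N=N^1\supseteq N^2\supseteq\cdots\supseteq N^c\supseteq N^{c+1}=0$. Each $N^i$ is a two-sided ideal, so $1+N^i$ is a normal subgroup of $1+N$, and since $x\circ y=x+y+xy\equiv x+y\pmod{N^{i+1}}$ for $x,y\in N^i$, the graded quotients agree: $(1+N^i)/(1+N^{i+1})\cong N^i/N^{i+1}$ as abelian groups. In particular the additive and the adjoint groups have isomorphic associated graded, and I would run an induction on the nilpotency index $c$, peeling off the top power $N^c$, which satisfies $N^cN=NN^c=0$ and is therefore central, giving matching central extensions
\[
1\to (N^c,+)\to (N,\circ)\to (N/N^c,\circ)\to 1,\qquad 1\to (N^c,+)\to (N,+)\to (N/N^c,+)\to 1.
\]
The discrepancy between the two extensions is measured by the difference of the two $p^n$-power maps, namely by $(1+x)^{p^n}-1-p^nx=\sum_{k=2}^{p^n}\binom{p^n}{k}x^k$, so the whole problem comes down to controlling this nonlinear correction.

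This is exactly where the hypothesis $\Prank_D N<p-1$ should be used, and it is the step I expect to be the main obstacle. The coefficients $\binom{p^n}{k}$ with $p\nmid k$ are divisible by $p^n$ and contribute nothing new, whereas the terms with $p\mid k$ — first of all $x^p$ — are the dangerous ones. The bound on the $D$-Pr\"ufer rank limits the number of $D$-generators of $N$, hence of the degree-one part $N/N^2$ that generates the associated graded ring, and my expectation is that this forces the symmetric degree-$k$ expressions for $2\le k\le p-1$, together with the $p$-th power operation, to be absorbed by the additive $p^n$-power map without altering the count of elements of any given order. In the equal-characteristic situation $pN=0$ the mechanism is transparent: there $\Prank_D N<p-1$ bounds the nilpotency index below $p$, so $x^p=0$ and the Frobenius identity $(1+x)^p=1+x^p=1$ shows that both groups have exponent $p$. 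The genuine difficulty is to make this uniform across all $n$ and, above all, in the \emph{non-commutative} case, where the $\circ$-power map is not a homomorphism and the counting cannot rely on the structure theorem for finite abelian groups; this uniform control of the nonlinear terms under the rank bound is the technical core, and I would either reproduce it directly or invoke the general module-brace statement of which this theorem is the radical-ring specialisation.
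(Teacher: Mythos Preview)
The paper does not give its own proof of this statement: Theorem~\ref{theorem: main del corso} is quoted as a specialisation of \cite[Theorem~4.1]{DC23} and left without argument. Your closing line --- that you would ultimately ``invoke the general module-brace statement of which this theorem is the radical-ring specialisation'' --- is therefore exactly what the paper does, and from that point of view your proposal and the paper agree.

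The sketch preceding that fallback is a reasonable outline but not a proof. The equal-characteristic case $pN=0$ is essentially correct: the strictly descending chain $N\supsetneq N^2\supsetneq\cdots$ of $D/pD$-subspaces, together with $\Prank_D N<p-1$, forces $N^{p-1}=0$, and then $(1+x)^p=1+x^p=1$ so both groups have exponent $p$. (You are silently using that each $N^i$ is a $D$-submodule, i.e.\ that the $D$-action is compatible with the ring product; this is part of what ``$D$-module'' means in the module-brace setting, but it should be said.) In the general case, however, you only offer an ``expectation''. Matching the central extensions $1\to N^c\to (N,+)\to (N/N^c,+)\to 1$ and $1\to N^c\to (N,\circ)\to (N/N^c,\circ)\to 1$ with isomorphic kernel and (by induction) quotients of the same order-profile does \emph{not} by itself give equal order-profiles for the middle terms: one must control how lifts of $p^n$-torsion elements behave, and that is precisely governed by the correction $(1+x)^{p^n}-1-p^nx$ landing in $N^c$ or not. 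You identify this term but do not show how the rank bound tames it when $N$ contains elements of additive order $>p$, nor how the argument survives non-commutativity. So as an independent proof there is a genuine gap at the point you yourself flag as ``the technical core''; as a citation, your proposal coincides with the paper's treatment.
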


\subsection{The case of small radical rings}
In the following we give a converse of  Theorem \ref{theorem: main del corso}.
Let $D$ be a principal ideal domain, and let $p$ be a prime such that $pD$ is a prime ideal of $D$. Assume that $D/pD$ is finite, and write $\lambda=[D/pD:\F_p]$, so that $D/pD\cong \F_{p^\lambda}$.

Given a $D$-module $M$ of order a power of $p$, write $\Omega(M)$ for the \emph{socle} of $M$:
\begin{equation*}
	\Omega(M)=\{x\in M\mid px=0\}.
\end{equation*}
Note that $\Omega(M)$ is a $D$-module annihilated by $p$, hence an $\F_{p^\lambda}$-vector space.

\begin{lemma}\label{lemma: technical socle}
	Let $M$ be a $D$-module of order a power of $p$. Then
	\begin{equation*}
		\dim_{\F_{p^{\lambda}}}\Omega(M)=\Prank_D\Omega(M)=\Prank_D M.
	\end{equation*} 
\end{lemma}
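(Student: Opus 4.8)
The plan is to reduce everything to the structure theorem for finitely generated modules over the PID $D$ and then to compute the socle one cyclic summand at a time. First I would record that $M$, having order a power of $p$, is a finite abelian $p$-group, so $p^m x=0$ for all $x\in M$, where $p^m=|M|$; since the abelian-group multiplication by $p$ coincides with the $D$-action of $p\cdot 1_D$, this says that $(p\cdot 1_D)^m$ annihilates $M$. Writing $M\cong\bigoplus_{i=1}^k D/\mathfrak q_i^{a_i}$ as a direct sum of indecomposable cyclic $D$-modules (so that $k=\Prank_D M$ by definition), each prime-power ideal $\mathfrak q_i^{a_i}$ must then contain $(pD)^m$. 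Since $pD$ is prime and $\mathfrak q_i$ is prime, the containment $(pD)^m\subseteq\mathfrak q_i^{a_i}$ means $\mathfrak q_i^{a_i}$ divides $(pD)^m$, and unique factorisation of ideals forces $\mathfrak q_i=pD$. Thus every summand has the form $D/p^{a_i}D$ with $a_i\ge 1$, and $M\cong\bigoplus_{i=1}^k D/p^{a_i}D$. (The degenerate possibility $pD=0$, i.e. $D=\F_{p^{\lambda}}$ a field, is immediate on its own, since then $M$ is an $\F_{p^{\lambda}}$-vector space and all three quantities equal $\dim_{\F_{p^{\lambda}}}M$.)

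Next I would compute the socle of a single cyclic summand. In $D/p^aD$ one has $px=0$ if and only if $px\in p^aD$, i.e. $x\in p^{a-1}D$ because $D$ is a domain; hence $\Omega(D/p^aD)=p^{a-1}D/p^aD$, and multiplication by $p^{a-1}$ yields a $D$-module isomorphism $D/pD\cong p^{a-1}D/p^aD$. Therefore each $\Omega(D/p^aD)$ is isomorphic to $D/pD\cong\F_{p^{\lambda}}$, a one-dimensional $\F_{p^{\lambda}}$-vector space. Since the socle commutes with finite direct sums, $\Omega(M)\cong\bigoplus_{i=1}^k\Omega(D/p^{a_i}D)\cong (D/pD)^k$.

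Finally, the three quantities are read off from this decomposition. One has $\dim_{\F_{p^{\lambda}}}\Omega(M)=k$ because $\Omega(M)$ is a direct sum of $k$ one-dimensional $\F_{p^{\lambda}}$-spaces; $\Prank_D\Omega(M)=k$ because $\Omega(M)\cong(D/pD)^k$ already displays $\Omega(M)$ as a sum of $k$ indecomposable cyclic $D$-modules; and $\Prank_D M=k$ by the initial decomposition. The well-definedness of $\Prank_D$, that is, its independence of the chosen decomposition, is the uniqueness clause of the structure theorem and may be cited.

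I expect the only genuinely delicate point to be the step forcing every cyclic summand to be $p$-primary, namely the passage from \say{$M$ has order a power of $p$} to \say{$\mathfrak q_i=pD$}: here one must identify the integer-$p$ torsion with the $D$-annihilation by $p\cdot 1_D$ and rule out any other prime $\mathfrak q$ of $D$. The annihilation argument above handles this uniformly, with no case analysis on the residue cardinalities $|D/\mathfrak q|$; everything else is a direct computation with cyclic modules.
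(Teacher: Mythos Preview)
Your proof is correct and follows essentially the same route as the paper: decompose $M$ via the structure theorem over the PID $D$, identify each cyclic summand as $D/p^{a_i}D$, and compute the socle summand by summand to obtain $\Omega(M)\cong (D/pD)^k$. Your justification that every summand is $p$-primary (via the annihilator containing $(pD)^m$) is a bit more explicit than the paper's, which simply observes that $pD$ is the unique prime of $D$ lying over $(p)$, but the core computation and conclusion are identical.
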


\begin{proof}
The assumption on $D$ shows that $pD$ is the unique prime ideal of $D$ such that $pD\cap \Z=(p)$; therefore, applying the structure theorem for finitely generated modules over a PID, we find that we can write $M$ as product of indecomposable cyclic $D$-modules 
	\begin{equation*}
		M\cong \bigoplus_{i=1}^r D/p^{a_i}D,
	\end{equation*}
	for some $a_i>0$. 
	In particular, the group isomorphisms
	\begin{equation*}
		\Omega(M)\cong \bigoplus_{i=1}^r p^{a_i-1}D/p^{a_i}D\cong \bigoplus_{i=1}^r D/pD.
	\end{equation*}
 yield the desired equalities. 
\end{proof}

\begin{corollary}
	\label{corollary: lambda rank}
	Let $M$ be a $D$-module of order a power of $p$. Then
	\begin{equation*}
		\Prank M=\lambda\cdot \Prank_{D} M.
	\end{equation*}
\end{corollary}
\begin{proof}
	By Lemma~\ref{lemma: technical socle},
	\begin{equation*}
		\Prank M=\dim_{\F_p} \Omega(M)=\lambda\cdot \dim_{\F_{p^{\lambda}}} \Omega(M)=\lambda\cdot \Prank_{D} M.\qedhere
	\end{equation*}
\end{proof}
\begin{definition}
\label{def:lambdasmall}
Let $M$ be an abelian $p$-group, and let $\lambda$ be a  positive integer. We say that $M$ is \emph{$\lambda$-small} if $\Prank M <\lambda(p-1)$. 
\end{definition}
We arrive to our main result, in which we rewrite and provide a converse to Theorem~\ref{theorem: main del corso}.

\begin{theorem}\label{theorem: main}
Let $D$ be a principal ideal domain, and let $p$ be a prime such that $pD$ is a prime ideal of $D$ and $D/pD$ is finite; write $\lambda=[D/pD:\F_p]$.
	Let $N$ be a commutative radical ring of order a power of $p$ such that $N$ is a $D$-module. If one between $N$ and $1+N$ is $\lambda$-small, then $N\cong 1+N$.  
\end{theorem}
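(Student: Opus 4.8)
The plan is to deduce both implications from Theorem~\ref{theorem: main del corso}. First, Corollary~\ref{corollary: lambda rank} shows that for any $D$-module $M$ of $p$-power order one has $\Prank M=\lambda\cdot\Prank_D M$, so the condition that $M$ be $\lambda$-small, namely $\Prank M<\lambda(p-1)$, is exactly $\Prank_D M<p-1$. Hence if $N$ itself is $\lambda$-small, Theorem~\ref{theorem: main del corso} applies verbatim (using that $N$ is commutative) and yields $N\cong 1+N$. The whole difficulty is the other case: assuming $1+N$ is $\lambda$-small, I would reduce to the previous one by proving that then $N$ is $\lambda$-small as well.

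To carry out this reduction I would exhibit a large elementary abelian subgroup inside the socle $\Omega(1+N)$. The starting observation is that $V=\Omega(N)=\{v\in N\mid pv=0\}$ is not merely a subgroup but an ideal of $N$, since $pv=0$ forces $p(vn)=(pv)n=0$; thus $V$ is a finite commutative nilpotent $\F_{p^\lambda}$-algebra of dimension $\Prank_D N$, on which the Frobenius $\Phi\colon v\mapsto v^p$ is additive. The key elementary computation is that for $v\in V$ one has $(1+v)^p=1+v^p$, because $pv=0$ kills every intermediate binomial term $\binom{p}{i}v^{i}$ with $1\le i\le p-1$ (each is a multiple of $pv^{i}=0$). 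Setting $W=\ker\Phi=\{v\in V\mid v^p=0\}$, a $D$-submodule killed by $p$ and hence an $\F_{p^\lambda}$-subspace, one checks that $1+W$ is closed under multiplication (for $v,w\in W$ the product is $1+u$ with $u=v+w+vw\in V$ and $u^p=v^p+w^p+(vw)^p=0$), hence an elementary abelian subgroup of $\Omega(1+N)$ of order $p^{\lambda\dim_{\F_{p^\lambda}}W}$. This already gives
\begin{equation*}
\Prank(1+N)\ge \lambda\dim_{\F_{p^\lambda}}W.
\end{equation*}

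The heart of the argument, and the step I expect to be the main obstacle, is the lower bound
\begin{equation*}
\dim_{\F_{p^\lambda}}W=\dim_{\F_{p^\lambda}}\ker\Phi\ge\min(\Prank_D N,\;p-1).
\end{equation*}
I would prove this using the nilpotency filtration $V\supseteq V^2\supseteq\cdots\supseteq V^{k}\supsetneq V^{k+1}=0$. If $V^p=0$ then $\Phi=0$ and $W=V$, so the bound is clear. Otherwise, for $j_0=\lfloor k/p\rfloor+1$ one has $pj_0>k$, whence $V^{j_0}\subseteq\ker\Phi$; since for a nilpotent algebra the inclusions $V^i\supsetneq V^{i+1}$ are strict as long as $V^i\ne0$, a dimension count along the chain from $V^{j_0}$ down to $V^{k}\ne 0$ gives $\dim_{\F_{p^\lambda}}V^{j_0}\ge k-\lfloor k/p\rfloor\ge p-1$. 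The genuine subtlety is that $1+N$ carries no $D$-module structure of its own, so one cannot bound $\Prank(1+N)$ directly with the tools of Section~\ref{section: radical}; the whole point is to transport the question back to the $D$-module $N$, where the nonlinear term $v^p$—precisely the term responsible for $N\not\cong 1+N$ in the large-rank regime—is controlled by the filtration.

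Combining the two displays, $\Prank(1+N)\ge\min(\Prank N,\lambda(p-1))$. If $1+N$ is $\lambda$-small then $\Prank(1+N)<\lambda(p-1)$, which forces the minimum to be $\Prank N$, so $\Prank N<\lambda(p-1)$ and $N$ is $\lambda$-small. The first case then gives $N\cong 1+N$, completing the proof.
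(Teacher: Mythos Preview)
Your proof is correct and follows essentially the same approach as the paper: reduce the case ``$1+N$ is $\lambda$-small'' to the case ``$N$ is $\lambda$-small'' by exhibiting a large elementary abelian subgroup $1+W$ of $\Omega(1+N)$, where $W=\{v\in\Omega(N)\mid v^p=0\}$ is the Frobenius kernel on the socle, and then bounding $\dim W$ from below via the nilpotent filtration of $V=\Omega(N)$. The only difference is in how that last bound is obtained: the paper uses the rank--nullity identity $\dim W=\dim V-\dim V^{(p)}$ together with $V^{(p)}\subseteq V^p$ and the chain $V\supsetneq V^2\supsetneq\cdots\supsetneq V^p$ (each step dropping the $\F_{p^\lambda}$-dimension by at least one), giving $\dim_{\F_p} W\ge\lambda(p-1)$ directly when $V^p\ne0$; you instead locate an explicit subspace $V^{j_0}\subseteq W$ and count down the tail of the filtration. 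Both arguments yield the same inequality, and the paper's version is marginally shorter since it avoids introducing $j_0$ and the floor computation.
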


\begin{proof}
If we suppose that $N$ is $\lambda$-small, them result follows from Theorem~\ref{theorem: main del corso} and Corollary~\ref{corollary: lambda rank}

Conversely, suppose that $1+N$ is $\lambda$-small.	To lighten the notation, write $\dim$ for $\dim_{\F_p}$ and $M=\Omega(N)$. Then $M$ is an $\F_p$-vector space, and given $M^{(p)}=\{x^p\mid x\in M\}$ and $M_p=\{x\in M\mid x^p=0\}$, there exists an obvious isomorphism
	\begin{equation*}
		M/M_p\cong M^{(p)}
	\end{equation*}
	as $\F_p$-vector spaces, which implies
	\begin{equation*}
		\Prank N=\dim M=\dim M_p+\dim M^{(p)}.
	\end{equation*}
	Now note that $M_p$ is contained in $\Omega(1+N)$, by the well-known formula given in~\cite[Lemma 3]{FCC}. We derive that 
	\begin{equation*}
		\Prank N\le \Prank (1+N)+\dim M^{(p)}.
	\end{equation*}
When $M^{(p)}=0$, the inequality given implies that also $N$ is $\lambda$-small, as claimed. 

So suppose that $M^{(p)}\ne0$, hence also $M^p\ne 0$. (Here for all $i$, we denote by $M^i$ the ideal generated by the product of $i$ elements of $M$.) As $M$ is a finite radical ring, $M$ is nilpotent (see~\cite[Chapter IX,  Proposition 2.13]{hungerford}), so we have the chain of strict inequalities
\begin{equation*}
	\dim M^p< \dim M^{p-1}<\cdots<\dim M.
\end{equation*}
As each $M^i$ is an $\F_{p^{\lambda}}$-vector space, we deduce that
\begin{equation*}
	\dim M^p\le \dim M-\lambda(p-1),
\end{equation*}
and combining this with the previous equation, we conclude that
\begin{equation*}
	\Prank (1+N)\ge \lambda(p-1),
\end{equation*}
a contradiction. 
\end{proof}

\begin{remark}
	While in principle the previous result also holds for $p=2$, it is never applicable, as if $N$ is $\lambda$-small, then $\Prank N < \lambda$, that is, $\Prank_D N< 1$; this forces $N$ to be trivial.
\end{remark}

\begin{remark}
	This result can be applied in the framework of Hopf--Galois structures, to which radical rings and braces are tightly connected (see~\cite{FCC},~\cite{SV18}, and~\cite{ST23b}). Indeed, on the model of the main result of~\cite{FCC}, Theorem~\ref{theorem: main} proposes some circumstances under which the isomorphism class of the Galois group of a finite Galois extension and the type of a Hopf--Galois structure have to be the same.
\end{remark}


\section{Finite rings}
\label{sec:finiti}

Let $A$ be a finite commutative ring. As $A$ is Artinian, the structure theorem for Artinian rings says that we can write $A$ as the direct product of finite local rings.
In particular, in order to find the groups realisable in the class of finite commutative rings, we can focus our attention on finite commutative local rings. 

Given a finite local ring $A$, we write $\fm$ for its unique maximal ideal. Note that  the characteristic of $A$ is the power of a prime $p$, and  there exists $\lambda\ge 1$ such that
\begin{equation*}
	A/\fm\cong \F_{p^{\lambda}}.
\end{equation*} 
In this is the case, we write that $A$ is of $(p,\lambda)$-type. We will study the question of the realisability of  the groups in the class of finite commutative local rings of $(p,\lambda)$-type;  the finite abelian groups realisable in the class of finite rings will be the direct product of a finite number of them.

Let $A$ be a finite commutative local rings of $(p,\lambda)$-type. By~\cite[Theorem~3.1]{dcdAMPA}, 
\begin{equation}
\label{formula}
A^*\cong\F_{p^\lambda}^*\times1+\mathfrak m
\end{equation}
so the question reduces to find the possible structure of $1+\fm$, the adjoint group of the radical ring $\fm$.

The  question seems to be difficult in general. Some information can be derived from the knowledge of the group  structure of $\fm$, which can be studied with methods developed in \cite{DC23}. First, we state an useful lemma~\cite[Lemma 4.8]{DC23}. 
\begin{lemma}\label{lemma: restriction of scalars}
	Let $A$ be a finite commutative local ring of $(p,\lambda)$-type. Then every $A$-module is also a $\Z_p(\lambda)$-module by restriction of scalars.
\end{lemma}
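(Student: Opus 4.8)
The plan is to reduce the statement about the finite local ring $A$ to a statement about modules over the $p$-adic ring $\Z_p(\lambda)$, where the machinery of Section~\ref{sec:prel} on $p$-adic rings is available. The key structural facts are that $A$ has residue field $\F_{p^\lambda}$ and that its characteristic is a power of $p$, so the fundamental subring $A_0=\Z/p^a\Z$ for some $a\ge1$. The goal is to equip $A$, or rather any $A$-module, with a compatible $\Z_p(\lambda)$-action. First I would identify the natural ring homomorphism that makes this possible: since $A$ is local with residue field $\F_{p^\lambda}$ and characteristic $p^a$, one expects a (unique) copy of the unramified ring $\Z_p(\lambda)/p^a\Z_p(\lambda)$, whose residue field is exactly $\F_{p^\lambda}$, sitting inside $A$ and containing $A_0$.

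The heart of the argument is the existence of a ring homomorphism $\Z_p(\lambda)\to A$ lifting the identity on residue fields. I would construct this via a Hensel/Teichmüller-type lifting argument: the residue field $\F_{p^\lambda}$ is generated over $\F_p$ by a root of an irreducible polynomial of degree $\lambda$, and because $A$ is a complete (indeed finite, hence Artinian and $p$-adically complete) local ring, Hensel's lemma lets one lift this root to $A$. This produces a homomorphism from the unramified extension, i.e.\ from $\Z_p(\lambda)$, into $A$; since $\mathrm{char}(A)=p^a$, the map factors through $\Z_p(\lambda)/p^a\Z_p(\lambda)$, giving $A$ the structure of a $\Z_p(\lambda)$-algebra. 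Once $A$ is a $\Z_p(\lambda)$-algebra, every $A$-module becomes a $\Z_p(\lambda)$-module simply by restriction of scalars along this map, which is exactly the claim. Since this is stated as a rewriting of~\cite[Lemma 4.8]{DC23}, the cleanest route is to invoke that reference, after checking that the hypotheses there (a finite commutative local ring with the given residue data) match the present $(p,\lambda)$-type setting.

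The step I expect to be the main obstacle is the construction of the structure map $\Z_p(\lambda)\to A$ itself, namely verifying that the Teichmüller lift exists and is a ring homomorphism compatible with $A_0\hookrightarrow A$. This is the only genuinely non-formal point: one must check that lifting a generator of $\F_{p^\lambda}$ over $\F_p$ through the surjection $A\twoheadrightarrow A/\fm$ can be done so that the minimal polynomial lifts to one satisfied in $A$, using that $\fm$ is nilpotent (as $A$ is finite) so that Hensel's lemma applies with the residue field already containing the needed root. Everything after that—the factoring through $\Z_p(\lambda)/p^a\Z_p(\lambda)$ and the restriction of scalars for an arbitrary $A$-module—is routine. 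Given that the result is explicitly attributed to~\cite[Lemma 4.8]{DC23}, in the write-up I would keep the verification brief and defer the detailed lifting argument to that reference.
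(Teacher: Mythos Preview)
Your proposal is correct and matches the paper's approach: the paper gives no proof at all and simply cites \cite[Lemma 4.8]{DC23}, exactly as you suggest in your final paragraph. Your sketch of the underlying argument---lifting a generator of $\F_{p^\lambda}$ via Hensel's lemma in the finite (hence complete) local ring $A$ to produce a ring map $\Z_p(\lambda)\to A$, then restricting scalars---is the standard content behind that reference and is accurate.
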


\begin{corollary}\label{cor: m is lambda power}
	Let $p$ be a prime, and let $A$ be a finite commutative local ring of $(p,\lambda)$-type. Then $\mathfrak{m}$ is the $\lambda$-power of an abelian $p$-group.
\end{corollary}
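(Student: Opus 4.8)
The plan is to combine the two immediately preceding results: Lemma~\ref{lemma: restriction of scalars}, which says that $\mathfrak m$, being an $A$-module, is a $\Z_p(\lambda)$-module by restriction of scalars, and Proposition~\ref{proposition: a Zp-module is lambda power}, which says that every $\Z_p(\lambda)$-module is the $\lambda$-power of an abelian $p$-group. So the proof is essentially a one-line chaining of these facts, and the only content is to check that the hypotheses match up.

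First I would note that $\mathfrak m$ is an ideal of $A$, hence an $A$-module; since $A$ is a finite commutative local ring of $(p,\lambda)$-type, Lemma~\ref{lemma: restriction of scalars} applies and shows that $\mathfrak m$ is a $\Z_p(\lambda)$-module. Then I would invoke Proposition~\ref{proposition: a Zp-module is lambda power} directly: as a $\Z_p(\lambda)$-module, $\mathfrak m$ is the $\lambda$-power of an abelian $p$-group. The only small verification needed is that the conclusion is about the \emph{additive} group structure, so I would remark that the isomorphism produced is a group isomorphism (which is exactly what Proposition~\ref{proposition: a Zp-module is lambda power} delivers, since its statement is phrased at the level of the underlying abelian group).

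There is no real obstacle here: the work has all been front-loaded into Lemma~\ref{lemma: restriction of scalars} and Proposition~\ref{proposition: a Zp-module is lambda power}. The one thing worth confirming is that $\mathfrak m$ indeed has order a power of $p$, so that the finiteness and $p$-power hypotheses of Proposition~\ref{proposition: a Zp-module is lambda power} (implicit in ``abelian $p$-group'') are met. This follows because $A$ is finite of characteristic a power of $p$, so $\mathfrak m$ is a finite $p$-group additively. Concretely I would write:

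\begin{proof}
By Lemma~\ref{lemma: restriction of scalars}, the ideal $\mathfrak m$, viewed as an $A$-module, is a $\Z_p(\lambda)$-module by restriction of scalars. Since $A$ is finite of characteristic a power of $p$, the additive group of $\mathfrak m$ is a finite abelian $p$-group. The conclusion now follows at once from Proposition~\ref{proposition: a Zp-module is lambda power}, which asserts that every $\Z_p(\lambda)$-module is the $\lambda$-power of an abelian $p$-group.
\end{proof}
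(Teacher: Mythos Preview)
Your proposal is correct and follows exactly the same approach as the paper: invoke Lemma~\ref{lemma: restriction of scalars} to see that $\mathfrak m$ is a finite $\Z_p(\lambda)$-module, then apply Proposition~\ref{proposition: a Zp-module is lambda power}. The paper's proof is slightly terser (it omits the explicit remark about $\mathfrak m$ being a finite $p$-group), but the argument is identical.
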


\begin{proof}
	By Lemma~\ref{lemma: restriction of scalars}, $\fm$ is a finite $\Z_p(\lambda)$-module. Then the result follows from Proposition~\ref{proposition: a Zp-module is lambda power}.
\end{proof}

The last corollary gives us the information that  $1+\fm$ is an abelian $p$-group of order a power of $p^\lambda$. 
However, it turns out that $\fm$ and $1+\fm$ are not necessarily isomorphic  and,  for $\lambda>1$, the group  $1+\fm $ is not necessarily the $\lambda$-power of a $p$-group (see \cite[Example~2]{dcdAMPA}).
 On the other hand,  in view of the the constraints on the filtration of the group $1+\fm$ given in~\cite[Theorem~3.1]{dcdAMPA},  it can not be any $p$-group if $\lambda>1$,  nor any group of order a power of $p^\lambda$ (see \cite[Proposition~4.1 and Example~1]{dcdAMPA}).

Nevertheless, for $p$ odd, it is possible to construct examples of rings for which both $\fm$ and $1+\fm$ are isomorphic to the $\lambda$-power of a given abelian $p$-group. The following proposition is~\cite[Proposition 4.3]{dcdAMPA}. 
\begin{prop}\label{proposition: odd prime realisability}
Let $p$  be an odd prime. For all $\lambda\ge 1$ and for every finite abelian $p$-group $P$, there exists a finite local commutative ring $A$ of $(p,\lambda)$-type such that
	\begin{equation*}
		A^\ast\cong \F_{p^{\lambda}}^\ast \times P^{\lambda}.
	\end{equation*}
	\end{prop}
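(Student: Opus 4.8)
The plan is to construct an explicit ring $A$ of $(p,\lambda)$-type realising the prescribed group of units. Since by \eqref{formula} we have $A^*\cong\F_{p^\lambda}^*\times(1+\fm)$, and we want the second factor to be $P^\lambda$, the entire problem reduces to exhibiting a finite local commutative ring of $(p,\lambda)$-type whose maximal ideal $\fm$ satisfies $1+\fm\cong P^\lambda$. The natural source of local rings of $(p,\lambda)$-type is the unramified extension $\Z_p(\lambda)$ introduced in the preliminaries: its residue field is $\F_{p^\lambda}$, and its quotients $\Z_p(\lambda)/p^a\Z_p(\lambda)$ are finite local rings of the right type. More flexibly, I would take a power series or truncated polynomial construction over $\Z_p(\lambda)$ designed so that the structure of $\fm$ matches the structure of $P^\lambda$.

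First I would reduce to the case where $P$ is cyclic. Writing $P=\bigoplus_{j} \Z/p^{b_j}\Z$, a local ring realising $\F_{p^\lambda}^*\times(\Z/p^{b_j}\Z)^\lambda$ for each single $\Z/p^{b_j}\Z$ does not immediately combine, since the direct product of local rings is not local. So instead I would look for one local ring whose $1+\fm$ directly produces the full $P^\lambda$. The cleanest candidate is $A=\Z_p(\lambda)[x_1,\dots,x_k]/J$ for a suitable ideal $J$ chosen so that $\fm=(p,x_1,\dots,x_k)$ has the required module structure and, crucially, so that the exponential/logarithm correspondence turns the additive structure of $\fm$ into the multiplicative structure of $1+\fm$. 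This is where the hypothesis $p$ odd is essential: for $p$ odd the truncated exponential and logarithm give mutually inverse maps between $\fm$ (with $+$) and $1+\fm$ (with $\cdot$) on the range where they converge, yielding a genuine isomorphism of groups; for $p=2$ the factor $\tfrac12$ obstructs this, which is exactly why the statement excludes $p=2$.

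The key steps, in order, are: (i) choose the defining ideal $J$ so that $A$ is local of $(p,\lambda)$-type with $A/\fm\cong\F_{p^\lambda}$ and so that $\fm$, as a $\Z_p(\lambda)$-module, is the $\lambda$-power of $P$ in the sense of Proposition~\ref{proposition: a Zp-module is lambda power} and Corollary~\ref{cor: m is lambda power}; (ii) verify nilpotency of $\fm$ so that the truncated exponential and logarithm series terminate and are well defined on $\fm$; (iii) show that for $p$ odd these series are inverse bijections $\fm\leftrightarrow 1+\fm$ carrying $+$ to $\cdot$, hence $1+\fm\cong(\fm,+)\cong P^\lambda$ as groups; (iv) assemble with \eqref{formula} to conclude $A^*\cong\F_{p^\lambda}^*\times P^\lambda$.

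The main obstacle will be step (i): arranging a single defining ideal $J$ so that the additive (equivalently $\Z_p(\lambda)$-module) structure of $\fm$ is exactly $P^\lambda$ while keeping the ring local with the correct residue field and ensuring $\fm$ stays nilpotent. One must control the interaction between the generator $p$ (which contributes the $\Z_p(\lambda)$-module relations coming from the base) and the nilpotent generators $x_i$; getting the exponents $b_j$ of the summands of $P$ to appear correctly requires a careful choice of the relations, for instance relations of the form $x_i^{m_i}=p^{c_i}\cdot(\text{unit})$ or $x_i^{m_i}=0$ tuned to reproduce each cyclic factor. Once the module structure of $\fm$ is pinned down, steps (ii)--(iv) are the standard odd-$p$ exponential argument and should be routine. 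I would therefore devote most of the effort to verifying the local structure and the module type of $\fm$ in step (i), and only then invoke the exponential correspondence of step (iii), which is precisely the mechanism that fails at $p=2$.
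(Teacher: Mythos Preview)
The paper does not prove this proposition itself; it is quoted verbatim from \cite[Proposition~4.3]{dcdAMPA}, so there is no in-paper argument to compare against. I evaluate your plan on its own terms.

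Your outline is sound in spirit, but steps (ii)--(iii) contain a real gap that would block the argument as written. Nilpotency of $\fm$ makes the series $\sum_{n\ge 0} x^n/n!$ a \emph{finite} sum, but it does not make the individual terms \emph{well-defined}: for $n\ge p$ the denominator $n!$ is divisible by $p$, which is not a unit in $A$. If you force $\fm^p=0$ so that only the terms with $n<p$ survive, then since $p\cdot 1\in\fm$ you get $p^{p-1}\fm\subseteq\fm^p=0$, so every element of $\fm$ has additive order at most $p^{p-1}$; this rules out every $P$ of exponent larger than $p^{p-1}$. What actually makes the exponential work for odd $p$ is $p$-adic convergence, not nilpotency: on $p\Z_p(\lambda)$ one has $v_p(x^n/n!)\to\infty$, so $\exp$ and $\log$ are mutually inverse group isomorphisms $p\Z_p(\lambda)\leftrightarrow 1+p\Z_p(\lambda)$, and these descend to every quotient. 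You need to arrange your ring so that this $p$-adic mechanism applies, not merely so that $\fm$ is nilpotent.

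With that correction the cleanest realisation of step (i) needs no auxiliary generators at all. Write $P=\prod_{i=1}^k \Z/p^{b_i}\Z$, set $R_i=\Z_p(\lambda)/p^{b_i+1}$, and form the fibred product
\[
A=R_1\times_{\F_{p^\lambda}}\cdots\times_{\F_{p^\lambda}} R_k
\]
over the common residue field. Then $A$ is finite local of $(p,\lambda)$-type with $\fm_A=\prod_i pR_i$, and the $p$-adic logarithm on each factor gives $1+pR_i\cong pR_i\cong(\Z/p^{b_i}\Z)^\lambda$, whence $1+\fm_A\cong P^\lambda$ and $A^\ast\cong\F_{p^\lambda}^\ast\times P^\lambda$ by \eqref{formula}. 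Your suggested relations of the form $x_i^{m_i}=p^{c_i}\cdot(\text{unit})$ can be tuned to present the same ring, but the fibred-product description makes both the locality and the applicability of the $p$-adic exponential transparent, and it is in this form that the argument in \cite{dcdAMPA} is most naturally read.
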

	
	The previous proposition shows that when $\lambda=1$ and $p$ is odd then $1+\fm$ can be any abelian $p$-group, completely solving the question of groups realising in this case.

\subsection{The case of small groups} 
In the case when   $\fm$ or $1+\fm$  is small, by applying Theorem~\ref{theorem: main} and Corollary~\ref{cor: m is lambda power} we can describe all the possibility for the group of units of $A$.

\begin{prop}\label{proposition: main finite}
	Let $A$ be a finite commutative local ring of $(p,\lambda)$-type. Suppose that one between $\fm$ and $1+\fm$ is $\lambda$-small. Then $\fm\cong 1+\fm$. In particular, $1+\fm$ is the $\lambda$-power of a $1$-small abelian $p$-group.
\end{prop}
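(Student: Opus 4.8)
The plan is to reduce the statement to a direct application of the two main results already established, namely Theorem~\ref{theorem: main} and Corollary~\ref{cor: m is lambda power}. First I would observe that $A$, being a finite commutative local ring of $(p,\lambda)$-type, is in particular an $A$-module over itself, and that its maximal ideal $\fm$ consists of nilpotent elements (since $\fm$ is the nilradical of a finite local ring). Hence $\fm$ is a commutative radical ring of order a power of $p$, and its adjoint group is precisely $1+\fm$. By Lemma~\ref{lemma: restriction of scalars}, $\fm$ carries the structure of a $\Z_p(\lambda)$-module by restriction of scalars. Thus the ring $D=\Z_p(\lambda)$ plays the role of the PID in Theorem~\ref{theorem: main}: indeed $p\Z_p(\lambda)$ is the unique non-zero prime ideal, $\Z_p(\lambda)/p\Z_p(\lambda)\cong\F_{p^\lambda}$ is finite, and the residue degree over $\F_p$ is exactly $\lambda$, matching the parameter in the hypothesis.

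With this identification in place, the hypothesis that one between $\fm$ and $1+\fm$ is $\lambda$-small is exactly the hypothesis of Theorem~\ref{theorem: main} (with $N=\fm$), so I would invoke that theorem directly to conclude $\fm\cong 1+\fm$ as abelian groups. This gives the first assertion.

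For the \say{In particular} clause, I would combine this isomorphism with Corollary~\ref{cor: m is lambda power}, which tells us that $\fm$ is the $\lambda$-power of an abelian $p$-group, say $\fm\cong P^\lambda$ for some abelian $p$-group $P$. Since $1+\fm\cong\fm\cong P^\lambda$, it remains only to check that $P$ is $1$-small, that is, $\Prank P<p-1$. This follows from the relation between the ordinary Pr\"ufer rank and the $D$-Pr\"ufer rank: by Corollary~\ref{corollary: lambda rank} applied to $\fm$ (viewed as a $\Z_p(\lambda)$-module), one has $\Prank\fm=\lambda\cdot\Prank_{D}\fm$, while $\Prank\fm=\lambda\cdot\Prank P$ since $\fm\cong P^\lambda$. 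The $\lambda$-smallness of $\fm$ (or equivalently of $1+\fm$, which has the same rank) reads $\Prank\fm<\lambda(p-1)$, and dividing by $\lambda$ yields $\Prank P<p-1$, i.e. $P$ is $1$-small.

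I do not anticipate a serious obstacle here, since the proposition is essentially a packaging of earlier results for the concrete ring $D=\Z_p(\lambda)$; the only point requiring a little care is verifying that $\Z_p(\lambda)$ genuinely satisfies all the hypotheses on $D$ demanded by Theorem~\ref{theorem: main} (that it is a PID with $p\Z_p(\lambda)$ prime and finite residue field of the correct degree), which is guaranteed by the properties of $\Z_p(\lambda)$ recalled in Section~\ref{sec:prel}. The bookkeeping on the ranks in the last clause is the one spot where one must be careful to distinguish $\Prank$ from $\Prank_D$, but it is routine given Corollary~\ref{corollary: lambda rank}.
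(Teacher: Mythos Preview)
Your proposal is correct and follows the same approach as the paper: invoke Lemma~\ref{lemma: restriction of scalars} to view $\fm$ as a $\Z_p(\lambda)$-module, apply Theorem~\ref{theorem: main}, and then use Corollary~\ref{cor: m is lambda power}. You are in fact more careful than the paper in the final clause, explicitly checking via the rank relation that the $p$-group $P$ with $\fm\cong P^\lambda$ is $1$-small; the paper's proof simply states that $1+\fm$ is a $\lambda$-power and leaves the $1$-smallness of $P$ implicit.
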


\begin{proof}
	By Lemma~\ref{lemma: restriction of scalars}, $\fm$ is a $\Z_p(\lambda)$-module. In particular, we can apply Theorem~\ref{theorem: main} to prove that if one between $\fm$ and $1+\fm$ is $\lambda$-small, then $\fm\cong 1+\fm$. By Corollary~\ref{cor: m is lambda power}, we conclude that if this is the case, then $1+\mathfrak{m}$ is the $\lambda$-power of an abelian $p$-group.
\end{proof}

The last proposition allows us to prove that, under some conditions, the rings obtained via Proposition~\ref{proposition: odd prime realisability} are the only ones we can obtain, so we can give some answer to Fuchs' problem.

\begin{theorem}\label{theorem: main finite case}
	Let $\lambda$ be a positive integer, and let $p$ be an odd prime. Then the finite abelian groups with $\lambda$-small Sylow $p$-subgroup appearing as groups of units of finite local commutative rings of $(p,\lambda)$-type are exactly those of the form
	\begin{equation*}
		\F_{p^{\lambda}}^\ast \times P^{\lambda},
	\end{equation*}
	where $P$ is a $1$-small abelian $p$-group.
\end{theorem}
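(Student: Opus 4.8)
The plan is to prove both directions of the classification, showing that the groups of the prescribed form are exactly the realisable ones under the $\lambda$-smallness hypothesis. Let me sketch the argument.

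**Planning the proof.**

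The statement is a biconditional: I need to show that a finite abelian group $G$ with $\lambda$-small Sylow $p$-subgroup arises as $A^*$ for some finite local commutative ring of $(p,\lambda)$-type if and only if $G \cong \F_{p^\lambda}^* \times P^\lambda$ for some $1$-small abelian $p$-group $P$.

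For the direction showing such groups are realisable, I would invoke Proposition~\ref{proposition: odd prime realisability} directly. Given a $1$-small abelian $p$-group $P$, that proposition produces a finite local commutative ring $A$ of $(p,\lambda)$-type with $A^* \cong \F_{p^\lambda}^* \times P^\lambda$. I should verify that the Sylow $p$-subgroup of this group, namely $P^\lambda$, is indeed $\lambda$-small: by Corollary~\ref{corollary: lambda rank} (or directly from the definition), $\Prank P^\lambda = \lambda \cdot \Prank P < \lambda(p-1)$ precisely because $P$ is $1$-small, i.e.\ $\Prank P < p-1$. So these groups do belong to the class under consideration, and the construction realises them.

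The reverse direction is where the real work lies. Suppose $A$ is a finite local commutative ring of $(p,\lambda)$-type whose group of units has $\lambda$-small Sylow $p$-subgroup. By the decomposition~\eqref{formula}, $A^* \cong \F_{p^\lambda}^* \times (1+\fm)$, and since $\F_{p^\lambda}^*$ has order $p^\lambda - 1$ coprime to $p$, the Sylow $p$-subgroup of $A^*$ is exactly $1+\fm$. Hence the hypothesis says $1+\fm$ is $\lambda$-small. Now I apply Proposition~\ref{proposition: main finite}: since one of $\fm$, $1+\fm$ is $\lambda$-small, we get $\fm \cong 1+\fm$, and moreover $1+\fm$ is the $\lambda$-power of a $1$-small abelian $p$-group $P$. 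This is precisely the asserted form, completing the reverse direction.

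**Main obstacle.** The conceptual heavy lifting has already been absorbed into Proposition~\ref{proposition: main finite} (which rests on Theorem~\ref{theorem: main}) and Proposition~\ref{proposition: odd prime realisability}, so the present theorem is essentially an assembly of these. The step requiring the most care is the bookkeeping around smallness: I must check that $\lambda$-smallness of $A^*$'s Sylow subgroup transfers correctly to $1+\fm$ (using that $\F_{p^\lambda}^*$ contributes no $p$-torsion), and that $1$-smallness of $P$ is the right normalisation so that $P^\lambda$ is $\lambda$-small. The point worth stating explicitly is the equivalence $1+\fm$ is $\lambda$-small $\iff$ $P$ is $1$-small, which follows from $\Prank(P^\lambda) = \lambda \cdot \Prank P$ via Corollary~\ref{corollary: lambda rank}. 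Apart from this, the oddness of $p$ is used only through Proposition~\ref{proposition: odd prime realisability} for the existence half; the smallness hypothesis is what pins down the structure in the reverse half.
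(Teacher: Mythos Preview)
Your proposal is correct and follows essentially the same route as the paper: one direction via Proposition~\ref{proposition: odd prime realisability} together with the check that $P^{\lambda}$ is $\lambda$-small, and the other via the decomposition $A^{*}\cong \F_{p^{\lambda}}^{*}\times (1+\fm)$ combined with Proposition~\ref{proposition: main finite}. If anything, you are slightly more explicit than the paper in justifying that the Sylow $p$-subgroup of $A^{*}$ is exactly $1+\fm$ and in spelling out the equivalence between $1$-smallness of $P$ and $\lambda$-smallness of $P^{\lambda}$.
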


\begin{proof}
First, let $P$ be a  $1$-small abelian $p$-group. By Proposition~\ref{proposition: odd prime realisability}, there exists a finite commutative local ring $A$ of $(p,\lambda)$-type such that  
\begin{equation*}
		A^{\ast}\cong \F_{p^{\lambda}}^\ast \times P^{\lambda}.
\end{equation*}
Clearly the Sylow $p$-subgroup of $A^{\ast}$ is $\lambda$-small. 	
	
Conversely, suppose that a group $G$ with $\lambda$-small Sylow $p$-subgroup is realisable as group of units of a finite local commutative ring  with $A$ of $(p,\lambda)$-type. Then 
	\begin{equation*}
		G\cong A^\ast\cong \F_{p^{\lambda}}^\ast \times 1+\fm.
	\end{equation*}
	By assumption,  $1+\fm$ is $\lambda$-small; hence $\fm\cong 1+\fm$ is the $\lambda$-power of a  $1$-small abelian $p$-group by Proposition~\ref{proposition: main finite}.
\end{proof}

\begin{remark}
\label{rem:finitechar}
 By Corollary~\ref{cor:finiti}, the finitely generated abelian groups  that can be realised as the groups of unit of a finite characteristic ring are exactly those whose torsion subgroup is realisable. Therefore, the previous result gives families of both realisable and non-realisable finitely generated abelian groups of any rank, in the class of finite characteristic rings.
 \end{remark}

The following example shows how our result can be used to prove that some group  is, or is not, realisable in the class of (finite) rings.

\begin{example}
	Consider the group
	\begin{equation*}
		G=(\Z/5\Z)^2\times \Z/600\Z. 
	\end{equation*}
	We claim that $G$ is not realisable as the group of units of any ring. To show this, we first suppose that $G$ is realisable in the class of finite rings. We write
	\begin{equation*}
		G\cong  \Z/8\Z\times\Z/3\Z\times (\Z/5\Z)^2\times \Z/25\Z,
	\end{equation*}
	and assume, by contradiction, that 
	 $G=A^*$ for some finite ring $A$. Let $A\cong A_1\times\dots\times A_s$, where each  $A_i$ is  local of $(p_i,\lambda_i)$-type. Then
	 \begin{equation*}
	 	G\cong \prod_{i=1}^s(\F_{p_i^{\lambda_i}}^*\times 1+\mathfrak m_i).
	 \end{equation*}
When $p$ is odd  $\F_{p^\lambda}^*$ has even cardinality, so, having $G$ a cyclic Sylow, $2$-subgroup  at most one of the $p_i$ can be  odd.

Looking at the Sylow $5$-subgroup  of $G$ we contend that one of the $p_i$ must be 5.
In fact, otherwise there must be three primes $p_i$ such that 5 divides $|\F_{p_i^{\lambda_i}}^*|$, and at least two of them, say $p_1$ and $p_2$, need to be the prime 2. This means that $2^{\lambda_1}-1$ and 
$2^{\lambda_2}-1$ should divide $600$, and therefore $75$, and be divisible by $5$, and it is immediate to see that the only possibility  is $2^{\lambda_1}-1=2^{\lambda_2}-1=15$; this is not possible since $G$ has just one factor of order 3.

So, $G$ should have a factor of the form
	\begin{equation*}
		\F_{5^{\lambda}}^*\times H,
	\end{equation*}
	where $H$ is a $5$ group. This forces $\lambda=2$ and $H$ to be a factor of
	\begin{equation*}
		(\Z/5\Z)^2\times\Z/25\Z.
	\end{equation*}
In particular, 
		\begin{equation*}
		\Prank H\le 3<\lambda_5(5-1). 
	\end{equation*}
We can apply Theorem~\ref{theorem: main finite case} to deduce that $H$ is the $2$-power of an abelian $5$-group, that is, $H$ is trivial or $H=(\Z/5\Z)^2$. 

But the $\Z/25\Z$ cannot be a factor of the group $\F_{2^{\lambda}}^*$, so $G$ is not realisable by finite rings.
	
Finally, we note that by Proposition~\ref{prop:k<2} below, no factor of $G$ may be realised as the group of units of a TN ring, deriving our claim. 
\end{example}

\section{TN rings: The tools}\label{section: tn tools}

 The aim of this section is to introduce the tools for the study of the group of units  of  TN rings. This machinery will  then be used in the next section to classify, within a certain class of \emph{small} finitely generated abelian groups,  those that are realisable in the class of TN ring. 
 
 We begin our analysis with torsion-free rings, as if $A$ is a $\TN$ ring, then $A/\nt$ is a torsion-free ring, and its group of units appears in the short exact sequence~\eqref{eq:success}. 
  
 \subsection{A review on torsion-free rings}
 
We collect here some facts about torsion-free rings proved in \cite{JLMS}. These results will be useful for the discussion of the general case. 

The following theorem is \cite[Theorem~5.1]{JLMS}.
\begin{theorem}\label{teo:thm5.1JLMS}
Let $T$ be a finite abelian group of even order. 	The group $T\times \Z^r$ is realisable in the class of torsion-free rings
if and only if $r\ge g(T)$, where $g(T)$ is a constant depending on the structure of $T$. 
\end{theorem}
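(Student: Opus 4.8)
The plan is to prove both directions of the equivalence, with the forward (necessity) direction being the genuinely delicate part. For the easy direction, I would establish that $r \ge g(T)$ suffices: given a finite abelian group $T$ of even order, I want to exhibit a torsion-free ring $A$ with $A^* \cong T \times \Z^r$ for every $r \ge g(T)$. The natural candidate is the group ring $\Z T$, or rather a suitable order inside the $\Q$-algebra $\Q T$. Since $T$ is abelian, $\Q T \cong \prod_i \Q(\zeta_{n_i})$ decomposes as a product of cyclotomic fields, and the group of units of an order in such a product is, by Dirichlet's unit theorem applied to each factor (cf.\ the Higman-type computation mentioned in the introduction), of the form $W \times \Z^{g}$, where $W$ collects the roots of unity and $g$ is the total free rank. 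The constant $g(T)$ should be defined precisely as this minimal achievable rank, so that $\Z T$ (or the relevant order) realises $T \times \Z^{g(T)}$; to push the rank up to any $r > g(T)$ I would adjoin free units, for instance by tensoring with a ring like $\Z[x]/(\text{suitable polynomial})$ or passing to $A \times \Z[\Z]$-type constructions that contribute extra copies of $\Z$ to the unit group without disturbing the torsion part.

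The harder direction is necessity: if $T \times \Z^r$ is realisable by a torsion-free ring then $r \ge g(T)$. Here I would use the reduction machinery already set up in the paper. By Corollary~\ref{cor:sample-rings} and Proposition~\ref{prop:ps} I may assume the realising ring $A$ is commutative, finitely generated and integral over $A_0 = \Z$, and being torsion-free it is in particular reduced, hence embeds into a finite product of the localisations/quotients that are integral domains. The torsion units $T = \ator$ consist of roots of unity, so the subring $\Z[\ator]$ they generate maps onto a product of cyclotomic rings $\Z[\zeta_{n_i}]$; the structure of $T$ forces a lower bound on how many infinite-order units must appear, because each cyclotomic factor $\Q(\zeta_n)$ contributes a fixed number of independent units by Dirichlet, and capturing all of $T$ as torsion units requires enough cyclotomic factors. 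The key point is that $\rank A^* \ge \rank C^*$ where $C = \Z[\ator]$, and $\rank C^* \ge g(T)$ by a direct Dirichlet computation on the cyclotomic structure, so $r \ge g(T)$.

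The main obstacle I anticipate is the precise bookkeeping in the necessity direction: one must show that no clever torsion-free ring can realise $T$ with fewer than $g(T)$ free units. The subtlety is that the ring $A$ need not be a product of cyclotomic rings on the nose, and torsion units could conceivably be distributed across several idempotent components in a way that shares or economises on the free rank. To control this I would decompose $A \otimes \Q$ into its simple (field) components, track which roots of unity live in which component, and argue that the minimal configuration is exactly the one computed from the cyclotomic decomposition of $\Q T$. Establishing that $g(T)$ as defined by the construction coincides with the lower bound — i.e.\ that the two sides of the inequality meet — is where the real content lies, and it is presumably here that the detailed analysis of \cite[Theorem~5.1]{JLMS} concentrates its effort. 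Since the statement cites that theorem verbatim, I would expect the proof to reproduce or invoke exactly this Dirichlet-plus-decomposition argument, with $g(T)$ given by an explicit formula in terms of the invariant factors of $T$.
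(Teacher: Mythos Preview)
The paper does not prove this theorem: it is quoted verbatim as \cite[Theorem~5.1]{JLMS}, and the constant $g(T)$ is only described here (after the statement) in the special case where $T_2$ is cyclic, by reference to \cite[Equation~(12)]{JLMS}. So there is no ``paper's own proof'' to compare against; the paper treats this result as a black box and only records, in Examples~\ref{example: torsion-free not 1} and~\ref{example: torsion-free 1}, the explicit torsion-free rings from \cite{JLMS} that realise the minimum rank $g(T)$ when $T_2$ is cyclic, together with the Laurent-polynomial trick (Proposition~\ref{prop: laurent}) to push the rank up.

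Your sketch is in the right spirit for what the actual proof in \cite{JLMS} does --- embed the ring in a product of cyclotomic fields and invoke Dirichlet --- but the construction you propose for sufficiency is off. The integral group ring $\Z T$ has $(\Z T)^*_{\tors}\cong \pm T\cong \Z/2\Z\times T$ by Higman, not $T$, so it does not realise $T$ as the torsion part. The rings actually used (see Examples~\ref{example: torsion-free not 1} and~\ref{example: torsion-free 1}) are carefully chosen \emph{subrings} of products $\prod_i\Z[\zeta_{n_i}]$, selected so that the diagonal $-1$ is already accounted for and the torsion units are exactly $T$; the combinatorics of which $n_i$ are needed, and how many, is what produces the formula for $g(T)$. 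Your idea for increasing the rank via ``$A\times\Z[\Z]$-type constructions'' is essentially Proposition~\ref{prop: laurent}. For necessity your outline is broadly correct but hides the real work: one must show that \emph{every} admissible embedding of a torsion-free ring with torsion units $T$ into a product of cyclotomic rings forces at least $g(T)$ independent units, and that the minimum is attained; this is the content of \cite[\S4--5]{JLMS}.
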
	
The constant $g(T)$ is specified in~\cite[Equation~(12)]{JLMS}.

We now restrict our study to groups $T$ with a cyclic Sylow $2$-subgroup. Consider the decomposition of $T$ into cyclic groups of order the power of a prime
\begin{equation}
\label{eq:T}
	T\cong \Z/2^{\varepsilon}\Z \times \prod_{i=1}^s \Z/p_i^{a_i}\Z,
\end{equation}
where $\varepsilon\ge1$, $s\ge 0$, and $p_i$ is an odd prime with $a_i\ge 1$ for all $i$. Write $s_0$ for the cardinality of the set $\{p_1,\ldots,p_s\}$. The value of  $g(T)$ in case is 
\begin{equation*}
	g(T)=\sum_{i=1}^s \left(\frac{\phi(2^{\varepsilon}p_i^{a_i})}{2}-1\right)+c(T),
\end{equation*}
where 
\begin{equation*}
	c(T)=\begin{cases}
		\left(\frac{\phi(2^{\varepsilon})}{2}-1\right)^{*}&\text{if $s_0\ne 1$},\\
		0 & \text{if $s_0= 1$}.
	\end{cases}
\end{equation*}
Here $\phi$ denotes the Euler's totient function and $\left(\frac{\phi(2^{\varepsilon})}{2}-1\right)^{*}$ denotes $\frac{\phi(2^{\varepsilon})}{2}-1$ except when $\varepsilon=1$, in which case it is equal to $0$ by convention. 

Note that, by the description of $g(T)$, if $T'$ is a subgroup of $T$ of even order, then 
\begin{equation}\label{eq:g-crescente}
	g(T')\le g(T)
\end{equation}
(this is no more true when the Sylow $2$-subgroup of $T$ is not cyclic).

Following \cite{JLMS}, we present explicitly two class of torsion-free ring useful to realise minimum values in the next sections.  In the following example we keep the notation given in~\eqref{eq:T}.
\begin{example}\label{example: torsion-free not 1}
	Let
\begin{equation*}
	\mathcal{M}=\prod_{i=1}^s \Z[\zeta_{2^{\varepsilon}p_i^{a_i}}] \times \Z[\zeta_{2^{\varepsilon}}]. 
\end{equation*}
For all $i=1,\ldots,s$,  take $\beta_i=(1,\ldots, 1,\zeta_{p_i^{a_i}},1,\ldots,1)$, where  $\zeta_{p_i^{a_i}}$ appears in the $i$th component, and define $B=\Z[\zeta_{2^{\varepsilon}}][\beta_1,\ldots,\beta_s]$ (here we are identifying $B$ with a subring of $\mathcal{M}$ via the diagonal embedding of $\Z[\zeta_{2^\varepsilon}]$). This is a reduced torsion-free ring.

As clarified in~\cite[Subsection 5.2]{JLMS}, the following holds:
\begin{equation*}
	B^{\ast}\cong
	\begin{cases}
		T\times \Z^{g(T)} &\text{if $s_0\ne 1$},\\
		T\times \Z^{g(T)+g(\Z/2^{\varepsilon}\Z)} &\text{if $s_0= 1$}. 
	\end{cases}
\end{equation*}
In particular, in the case $s_0 \ne 1$, the ring $B$ realises the minimum value of the rank of a torsion-free ring whose group of torsion units as in \eqref{eq:T}.
\end{example}

\begin{example}\label{example: torsion-free 1}
	Suppose that $s_0=1$, so that 
	\begin{equation}
	\label{eq:s0=1}
		T\cong \Z/2^{\varepsilon}\Z\times \prod_{i=1}^v \Z/p^{a_i}\Z,
	\end{equation}
	with $1\le a_1\le\cdots\le a_v$. Let
\begin{equation*}
	\mathcal M=\prod_{i=1}^v \Z[\zeta_{2^{\varepsilon}p^{a_i}}]. 
\end{equation*}
For all $i=2,\ldots,v$,  take $\beta_i=(1,\ldots, 1,\zeta_{p^{a_i}}1,\ldots,1)$, where  $\zeta_{p^{a_i}}$ appears in the $i$th component, and define $B=\Z[\zeta_{2^{\varepsilon}p^{a_1}}][\beta_2,\ldots,\beta_v]$ (here we are identifying $B$ with a subring of $\mathcal M$ via the diagonal embedding of $\Z[\zeta_{2^\varepsilon p^{a_1}}]$). This is a reduced torsion-free ring. Then as clarified in~\cite[Subsection 5.2]{JLMS}
\begin{equation*}
	B^{\ast}\cong T\times \Z^{g(T)}. 
\end{equation*}
In particular, this ring realises the minimum value of the rank of a torsion-free ring whose group of torsion units $T$ is as in \eqref{eq:s0=1}. 
\end{example}

\begin{remark}\label{remark: these are connected}
	Recall that a commutative ring is \emph{connected} if it has no idempotents different from $0$ and $1$. We claim that any torsion-free ring  with a unique element of multiplicative order $2$ is connected, so this is true for the rings obtained in Examples~\ref{example: torsion-free not 1} and~\ref{example: torsion-free 1}.
	
	In fact, let by $\epsilon$ an idempotent of $B$. Then $(1-2\epsilon)^2=1$, and therefore
	\begin{equation*}
		1-2\epsilon\in\{1, -1\}.
	\end{equation*} 
	This forces $2\epsilon=0$ or $2(\epsilon-1)=0$. Since $B$ is torsion-free, we can concldue that $\epsilon=0$ or $\epsilon=1$, as claimed. 
\end{remark}

If $B$ is a connected ring, then the group of units of the ring of Laurent polynomials $B[x, x^{-1}]$ is  the group $\langle B^*, x\rangle\cong B^*\times \Z$. This construction preserves the property of the ring of being connected and also of being  TN. Therefore, an immediate inductive argument  gives the next proposition, which allows us to  \say{increase the rank} of the group of units while remaining in the same class of rings.

 \begin{prop}\label{prop: laurent}
 	Let $B$ be a torsion-free reduced connected ring, and let $r\ge 1$. Write
 	\begin{equation*}
 		C=B[x_1,\ldots,x_r,x_1^{-1},\ldots x_r^{-1}].
 	\end{equation*}
 	Then
 	\begin{equation*}
 		C^{*}\cong B^*\times \Z^r.
 	\end{equation*}
 \end{prop}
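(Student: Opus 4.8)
The plan is to prove the key claim stated in the paragraph preceding the proposition---namely that if $B$ is connected, then $B[x,x^{-1}]^* = \langle B^*, x\rangle \cong B^* \times \Z$---and then to iterate it. The entire weight of the proposition rests on computing the units of a Laurent polynomial ring over $B$, so I would isolate this as the base case of an induction on $r$.

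\medskip

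For the base case, write an arbitrary element of $B[x,x^{-1}]$ as a finite Laurent series $f = \sum_{i=m}^{n} b_i x^i$ with $b_m, b_n \neq 0$. I want to show that if $f$ is a unit, then $f = b x^k$ for some unit $b \in B^*$ and some $k \in \Z$. The standard route is to reduce to the case where $B$ is an integral domain, where the leading/trailing coefficient argument is clean: in a domain, the product of two Laurent polynomials has degree (top minus bottom) equal to the sum of the two spans, so a unit must have span zero, i.e.\ be a single monomial $bx^k$, and then $b$ must be a unit of $B$. To descend from the connected case to the domain case, I would pass to quotients by minimal primes. Since $B$ is connected and reduced, the idempotent argument of Remark~\ref{remark: these are connected} shows $B$ has no nontrivial idempotents; the intersection of the minimal primes is the nilradical, which is $0$ since $B$ is reduced, so $B$ embeds into the product $\prod_{\mathfrak{p}} B/\mathfrak{p}$ of domains. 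A unit $f$ maps to a unit, hence a monomial, in each factor; connectedness is precisely what forces the exponents $k$ and the supports to be consistent across all factors, so that $f$ is globally a single monomial $bx^k$ with $b \in B^*$. Once this is established, the map $B[x,x^{-1}]^* \to \Z$ sending $bx^k \mapsto k$ is a surjective homomorphism with kernel $B^*$, and it splits via $k \mapsto x^k$, giving $B[x,x^{-1}]^* \cong B^* \times \Z$.

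\medskip

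For the inductive step, I would use that $C = B[x_1,\dots,x_r,x_1^{-1},\dots,x_r^{-1}] = B'[x_r,x_r^{-1}]$ where $B' = B[x_1,\dots,x_{r-1},x_1^{-1},\dots,x_{r-1}^{-1}]$. By the inductive hypothesis $B'$ is again a torsion-free reduced connected ring with $(B')^* \cong B^* \times \Z^{r-1}$; the preservation of reducedness and torsion-freeness is immediate (a Laurent polynomial ring over a reduced, respectively torsion-free, ring is again reduced, respectively torsion-free), and connectedness is preserved because by the base-case computation the only idempotents of $B'$ are the constant idempotents $0$ and $1$ inherited from $B$. Applying the base case to $B'$ then yields $C^* \cong (B')^* \times \Z \cong B^* \times \Z^r$.

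\medskip

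The main obstacle is the descent to the domain case in the base case: the leading-coefficient argument that kills the span of a unit is genuinely easy only over a domain, and over a general reduced ring one must argue that the monomials arising in the various domain quotients $B/\mathfrak{p}$ glue to a single global monomial. This is exactly where connectedness (no nontrivial idempotents) is used and cannot be dropped---over a disconnected ring such as $B_1 \times B_2$ one gets extra units like $(x,1)$ whose image is $(bx^k, b')$ with different exponents in the two factors. I would therefore state carefully that connectedness guarantees the common exponent $k$ and the agreement of supports, and verify that the resulting element is indeed a unit of the expected monomial form. Everything else---the splitting of the exponent map and the bookkeeping of the induction---is routine.
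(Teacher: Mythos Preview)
Your proposal is correct and follows essentially the same route as the paper: establish the base case $B[x,x^{-1}]^*\cong B^*\times\Z$ for a connected (reduced, torsion-free) ring, observe that the Laurent extension preserves the hypotheses, and induct on $r$. The paper simply asserts the base case without argument, so your reduction to domain quotients via minimal primes and the use of connectedness to force a common exponent supplies detail the paper omits; otherwise the structure is identical.
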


For a next use, we recall the following classification result for finite abelian groups; see~\cite[Theorem B]{dcdBLMS}.

\begin{theorem}\label{theorem: finite torsion-free}
	The finite abelian groups realisable in the class of torsion-free rings are exactly those of the form
	\begin{equation*}
		(\Z/2\Z)^a\times (\Z/4\Z)^b\times (\Z/3\Z)^c,
	\end{equation*}
	where  $a,b,c\in \N$, $a+b\ge 1$, and $a\ge 1$ if $c\ge 1$.
	\end{theorem}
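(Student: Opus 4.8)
The plan is to reduce any torsion-free ring with finite group of units to an order in a product of cyclotomic fields, read off the admissible unit groups, and then realise each of them by an explicit ring, using a fibre product to handle the cases with few elements of order $2$.

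\emph{Necessity.} Suppose $G=A^*$ is finite with $A$ torsion-free. By Proposition~\ref{prop:ABC} we may replace $A$ by $B=\Z[A^*]$, which has the same unit group and is generated over $\Z$ by roots of unity, hence is integral and module-finite over $\Z$. If $B$ had a non-zero nilpotent $x$, then, the characteristic being $0$, the elements $1+nx$ ($n\in\Z$) would be pairwise distinct units, contradicting finiteness of $G$; so $\n(B)=0$ and $B$ is reduced. Therefore $B$ is an order in $B\otimes\Q\cong\prod_{j}K_j$, where each $K_j=\Q(\pi_j(G))$ is generated by the roots of unity $\pi_j(G)$, hence is a cyclotomic field. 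Since the unit group of an order in a product of number fields has $\Z$-rank equal to $\sum_j\rank\cO_{K_j}^*$ (Dirichlet), finiteness of $G$ forces each $K_j$ to have unit rank $0$; among cyclotomic fields this happens exactly for $\Q(\zeta_m)$ with $\phi(m)\in\{1,2\}$, that is, for $K_j\in\{\Q,\Q(i),\Q(\zeta_3)\}$.

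Thus $G=B^*$ embeds into $\prod_j\mu(K_j)$, where $\mu(\Q)\cong\Z/2\Z$, $\mu(\Q(i))\cong\Z/4\Z$ and $\mu(\Q(\zeta_3))\cong\Z/6\Z$. Splitting $G$ into its $2$- and $3$-Sylow subgroups, the $3$-part is elementary abelian, say $(\Z/3\Z)^c$, while the $2$-part has exponent dividing $4$, hence is of the form $(\Z/2\Z)^a\times(\Z/4\Z)^b$; this gives the stated shape $G\cong(\Z/2\Z)^a\times(\Z/4\Z)^b\times(\Z/3\Z)^c$. The unit $-1$ has order $2$, so $a+b\ge1$. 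Suppose now $c\ge1$: then $G$ contains an element of order $3$, which can only come from a factor $\Q(\zeta_3)$, so some component $K_{j_0}=\Q(\zeta_3)$ occurs. If $-1=u^2$ for some $u\in G$, projecting to $K_{j_0}$ would give a unit of $\Z[\zeta_3]$ squaring to $-1$; but the units of $\Z[\zeta_3]$ are the $\pm\zeta_3^k$, whose squares are the $\zeta_3^{2k}\ne-1$, a contradiction. Hence $-1$ is an element of order $2$ that is not a square, which forces a direct factor $\Z/2\Z$, i.e. $a\ge1$.

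\emph{Sufficiency.} The rings $\Z$, $\Z[i]$ and $\Z[\zeta_3]$ are torsion-free with unit groups $\Z/2\Z$, $\Z/4\Z$ and $\Z/6\Z\cong\Z/2\Z\times\Z/3\Z$. When $c=0$, a product of $a$ copies of $\Z$ and $b$ copies of $\Z[i]$ realises $(\Z/2\Z)^a\times(\Z/4\Z)^b$ for any $a+b\ge1$. When $c\ge1$ (so $a\ge1$) the difficulty is to obtain $c$ independent elements of order $3$ while keeping only $a$ independent elements of order $2$; for this take the fibre product
\begin{equation*}
	A=\{(r_1,\ldots,r_c)\in\Z[\zeta_3]^c : r_i\equiv r_j \bmod (1-\zeta_3)\ \text{for all } i,j\},
\end{equation*}
a torsion-free subring of $\Z[\zeta_3]^c$. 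Since $\Z[\zeta_3]/(1-\zeta_3)\cong\F_3$ and every unit $\pm\zeta_3^k$ reduces to its sign in $\F_3^*$, a tuple $(u_1,\ldots,u_c)$ of units lies in $A^*$ exactly when all the $u_i$ share the same sign; this yields $A^*\cong\Z/2\Z\times(\Z/3\Z)^c$. Multiplying $A$ by $a-1$ copies of $\Z$ and $b$ copies of $\Z[i]$ then realises $(\Z/2\Z)^a\times(\Z/4\Z)^b\times(\Z/3\Z)^c$, completing the construction.

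\emph{Main obstacle.} The reduction to orders in cyclotomic fields is essentially routine; the genuinely delicate points are the two that pin down the constraint $c\ge1\Rightarrow a\ge1$: on the necessity side, the observation that $-1$ is never a square of a unit in $\Z[\zeta_3]$, so the unique forced element of order $2$ must split off a $\Z/2\Z$ summand; and on the sufficiency side, the synchronised fibre-product construction, which is what allows many independent cube roots of unity to coexist with a single global sign.
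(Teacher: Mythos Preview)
The paper does not prove this statement; it merely quotes it from \cite[Theorem~B]{dcdBLMS}. So there is no in-paper proof to compare against.

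Your argument is correct and complete. The necessity direction is exactly the strategy underlying the cited works: pass to $B=\Z[A^*]$, observe it is reduced (infinitely many units $1+nx$ would otherwise appear), embed it as an order in a product of cyclotomic fields, and use Dirichlet to force each factor into $\{\Q,\Q(i),\Q(\zeta_3)\}$. Your treatment of the constraint $c\ge1\Rightarrow a\ge1$ --- projecting a hypothetical square root of $-1$ to the $\Q(\zeta_3)$ factor and checking that no sixth root of unity squares to $-1$ --- is clean and matches the paper's repeated use of ``$-1$ is not a $2^{\varepsilon-1}$-power'' arguments (Proposition~\ref{prop:epsilon-1}, Proposition~\ref{prop:k<2}). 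For sufficiency, your synchronised fibre product over $\F_3$ is a nice explicit construction; the cited paper \cite{dcdBLMS} also builds the realising rings by gluing copies of $\Z[\zeta_3]$ along a common quotient, so the idea is the same.

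One small point worth making explicit (you use it implicitly): to conclude that $B^*$ is finite forces $\sum_j\rank\cO_{K_j}^*=0$, you need that an order in $\prod_j\cO_{K_j}$ has unit group of the \emph{same} rank as the maximal order; this is standard, but since it is the crux of the rank computation it would be worth a one-line justification or reference.
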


 \subsection{The role of $\varepsilon$}
 
Recall that TN rings have characteristic zero, and therefore  $-1$ is a unit of order of order $2$. In addition, as TN rings $A$ are assumed to have $A^*$ finitely generated, $(A^*)_{\tors}$ has even order.

For any finitely generated abelian group $G$, we denote by  $\varepsilon(G)$  the minimum $k$ such that $G_{\tors}$ has a cyclic factor of order $2^k$ in its decomposition into cyclic groups of prime power order.  The value of $\varepsilon(A^*)$ is a significant invariant for the structure of the ring and especially for its group of units, as we will see in what follows. 

For $n\ge 1$, say that a TN ring $A$ is an \emph{$n$-ring} if
 $A$ has a subring isomorphic to the cyclotomic ring $\Z[\zeta_n]$. (When $A$ is an $n$-ring, we identify $\Z[\zeta_n]$ with its copy in $A$.) The concept of $n$-ring naturally aligns with the concept of $\Z[\zeta_n]$-algebra.

\begin{prop}
Let  $A$ be a TN ring. The following are equivalent:
\begin{enumerate}
\item $A$ is a $\Z[\zeta_n]$-algebra.
\item $A$ is an $n$-ring
\end{enumerate}
\end{prop}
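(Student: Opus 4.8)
The plan is to prove the equivalence by recognising that being an $n$-ring is essentially a restatement of being a $\Z[\zeta_n]$-algebra, with the only subtlety being how a ring structure over $\Z[\zeta_n]$ interacts with the fixed fundamental subring. The implication $(1)\Rightarrow(2)$ is the easy direction: if $A$ is a $\Z[\zeta_n]$-algebra, then the structure morphism $\psi\colon\Z[\zeta_n]\to A$ sends $1$ to the identity of $A$, and I would argue that $\psi$ is injective. Injectivity follows because $A$ is a TN ring, hence of characteristic zero by Remark~\ref{rem:PS}, so the restriction of $\psi$ to $\Z$ is injective; since $\Z[\zeta_n]$ is the ring of integers of the number field $\Q(\zeta_n)$ and any nonzero ideal of $\Z[\zeta_n]$ meets $\Z$ nontrivially (as $\Z[\zeta_n]$ is integral over $\Z$), the kernel of $\psi$, being an ideal whose intersection with $\Z$ is zero, must itself be zero. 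Thus the image $\psi(\Z[\zeta_n])$ is a subring of $A$ isomorphic to $\Z[\zeta_n]$, so $A$ is an $n$-ring.

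For the converse $(2)\Rightarrow(1)$, suppose $A$ contains a subring $R$ isomorphic to $\Z[\zeta_n]$, which we identify with $\Z[\zeta_n]$ itself. The only thing to check is that this subring contains the fundamental subring $A_0$ and, more to the point, that the inclusion makes $A$ into a genuine $\Z[\zeta_n]$-algebra with the two subrings sharing the same unity. Since $A$ is commutative and the copy of $\Z[\zeta_n]$ contains $1_A$ (every subring in this setting is required to contain the identity, or at least the copy of $\Z[\zeta_n]$ does since it is isomorphic to a unitary ring and the isomorphism carries $1$ to an idempotent unit of $A$, forcing it to be $1_A$), the multiplication of $A$ restricts to a $\Z[\zeta_n]$-module action on $A$ that is compatible with the ring multiplication. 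This is exactly the data of a $\Z[\zeta_n]$-algebra structure.

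I would organise the write-up so that the bulk of the argument is the injectivity claim in $(1)\Rightarrow(2)$ and the \emph{shared identity} claim in $(2)\Rightarrow(1)$. The main obstacle, and the point most worth spelling out, is verifying that the copy of $\Z[\zeta_n]$ sitting inside $A$ shares the identity element of $A$: a priori a ring isomorphism $\Z[\zeta_n]\cong R$ only guarantees that $R$ has \emph{its own} unity $e=\psi(1)$, which is an idempotent of $A$. One must rule out a nontrivial idempotent, and here Remark~\ref{remark: these are connected} (or the same elementary computation $(1-2e)^2=1$ together with characteristic zero and torsion-freeness of the characteristic-zero subring) forces $e\in\{0,1\}$, and $e\ne 0$ since $R\cong\Z[\zeta_n]$ is nonzero, whence $e=1_A$. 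Once the shared-identity point is settled, both directions are formal, and the equivalence follows.
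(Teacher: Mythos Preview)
Your argument for $(1)\Rightarrow(2)$ is correct and is essentially the paper's: the paper phrases the injectivity of the unit map $\psi\colon\Z[\zeta_n]\to A$ as ``a nontrivial kernel would make $1$ a torsion element, since all proper quotients of $\Z[\zeta_n]$ are finite'', which is equivalent to your observation that any nonzero ideal of $\Z[\zeta_n]$ meets $\Z$ nontrivially.

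For $(2)\Rightarrow(1)$, the paper simply says ``clear'': under the paper's standing convention that rings are unitary (so that subrings share the identity and ring maps preserve it), a subring isomorphic to $\Z[\zeta_n]$ automatically contains $1_A$, and the inclusion is the desired algebra structure map. Your extended discussion of the shared-identity issue is therefore unnecessary in this context.

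More importantly, the argument you give for the shared-identity claim does not work. First, the image $e$ of $1$ under a \emph{non-unital} ring embedding is an idempotent of $A$, but there is no reason it should be a \emph{unit} of $A$; it is only a unit in the subring $R$. Second, the appeal to the connectedness remark is misplaced: that remark needs $A$ to be torsion-free and to have a unique element of multiplicative order $2$, neither of which is assumed for a general TN ring. In fact no argument can succeed here, because under the non-unital subring convention the proposition is \emph{false}: take $A=\Z[i]\times\Z$, which is a TN ring containing the non-unital subring $\Z[i]\times\{0\}\cong\Z[i]$, yet $A$ is not a $\Z[i]$-algebra (there is no element $b\in\Z$ with $b^2=-1$). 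So the shared-identity point is a matter of convention, not something to be proved.
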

 \begin{proof}
 $(2)\Rightarrow(1)$ is clear. Conversely, assume that $A$ is a $\Z[\zeta_n]$-algebra, and let 
 \begin{equation}
 \label{eq:psi}
 \psi\colon	\Z[\zeta_n]\to A,\quad z\mapsto z1
 \end{equation}  
be the unit map. This map is injective, because if it has a non-trivial kernel, then $1$ would be a torsion element, as all the quotients of $\Z[\zeta_n]$ by non-trivial ideals are finite by Proposition~\ref{proposition: cyclotomic quotients}.
 \end{proof}

The proof of the next proposition  follows the same lines as that of \cite[Theorem~5.1]{dcdBLMS}.
\begin{prop}
\label{prop:epsilon-1}
Let  $A$ be a TN ring. The following are equivalent:
\begin{enumerate}
\item $A$ is a $2^\varepsilon$-ring.
\item There exists $\alpha\in A$ such that $-1=\alpha^{2^{\varepsilon-1}}$.
\end{enumerate}
\end{prop}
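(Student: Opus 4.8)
The plan is to prove the equivalence of the two conditions for a TN ring $A$: being a $2^\varepsilon$-ring (i.e.\ containing a copy of $\Z[\zeta_{2^\varepsilon}]$) and possessing an element $\alpha$ with $-1=\alpha^{2^{\varepsilon-1}}$.

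\medskip

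For the direction $(1)\Rightarrow(2)$, the argument is essentially immediate. If $A$ is a $2^\varepsilon$-ring, then by the previous proposition $A$ is a $\Z[\zeta_{2^\varepsilon}]$-algebra, so we may take $\alpha=\zeta_{2^\varepsilon}$, the image of a primitive $2^\varepsilon$th root of unity. Then $\alpha^{2^{\varepsilon-1}}=\zeta_{2^\varepsilon}^{2^{\varepsilon-1}}$ is a primitive square root of unity, hence equals $-1$. Since the unit map $\psi$ of \eqref{eq:psi} is injective, this relation holds genuinely in $A$, giving the required element.

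\medskip

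The substance of the proof lies in the converse $(2)\Rightarrow(1)$, and this is where I expect the main obstacle. Suppose $\alpha\in A$ satisfies $\alpha^{2^{\varepsilon-1}}=-1$. The natural plan is to show that $\alpha$ generates a subring isomorphic to $\Z[\zeta_{2^\varepsilon}]$, equivalently that the ring homomorphism $\Z[\zeta_{2^\varepsilon}]\to A$ sending $\zeta_{2^\varepsilon}\mapsto\alpha$ is well-defined and injective. Well-definedness requires that $\alpha$ satisfy the $2^\varepsilon$th cyclotomic polynomial $\Phi_{2^\varepsilon}(x)=x^{2^{\varepsilon-1}}+1$, which is precisely the hypothesis $\alpha^{2^{\varepsilon-1}}=-1$. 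The delicate point, as the paper notes by referring to the proof of \cite[Theorem~5.1]{dcdBLMS}, is injectivity: one must rule out that $\alpha$ satisfies a proper factor of $\Phi_{2^\varepsilon}$ over some quotient, or that the map collapses. The key leverage is that $A$ is a TN ring, so by Remark~\ref{rem:PS} it has characteristic $0$ and, crucially, every nonzero quotient of $\Z[\zeta_{2^\varepsilon}]$ by a nonzero ideal is \emph{finite} (Proposition~\ref{proposition: cyclotomic quotients}). Thus any nontrivial kernel of the map would force $1$ (or some power expression) to be a torsion element of $A$; but torsion elements of a TN ring are nilpotent, while $\alpha$ is a unit (it satisfies $\alpha\cdot\alpha^{2^{\varepsilon-1}-1}=-1$, so $\alpha\in A^*$), and a unit cannot be nilpotent in a nonzero ring.

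\medskip

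Concretely, I would argue as follows. Consider the subring $\Z[\alpha]\subseteq A$ and the surjection $\pi\colon\Z[\zeta_{2^\varepsilon}]\to\Z[\alpha]$ with $\zeta_{2^\varepsilon}\mapsto\alpha$, well-defined by the hypothesis. If $\ker\pi\ne 0$, then $\Z[\alpha]\cong\Z[\zeta_{2^\varepsilon}]/\ker\pi$ is finite, so its identity $1$ has finite additive order, making $1$ a torsion element of $A$ and hence nilpotent by the TN property --- contradicting that $A$ has characteristic $0$. Therefore $\ker\pi=0$ and $\Z[\alpha]\cong\Z[\zeta_{2^\varepsilon}]$, exhibiting $A$ as a $2^\varepsilon$-ring. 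The one point demanding care is that the image $1\in\Z[\alpha]$ really coincides with the identity $1\in A$ (so that its finite order makes $1_A$ torsion), which follows since $\pi$ is a ring homomorphism preserving the unit; this is the step I would expect to require the explicit finiteness input from Proposition~\ref{proposition: cyclotomic quotients} and the characteristic-zero conclusion of Remark~\ref{rem:PS}.
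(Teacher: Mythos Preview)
Your proof is correct and follows essentially the same line as the paper's. The paper phrases the converse via the evaluation map $\varphi_\alpha\colon\Z[x]\to A$, notes that the irreducible polynomial $\mu(x)=x^{2^{\varepsilon-1}}+1$ lies in the kernel, and invokes \cite[Lemma~5.2]{dcdBLMS} to conclude $\ker\varphi_\alpha=(\mu(x))$; your version bypasses that citation by mapping directly from $\Z[\zeta_{2^\varepsilon}]$ and using the finiteness of its nonzero quotients (Proposition~\ref{proposition: cyclotomic quotients}) together with the characteristic-zero observation of Remark~\ref{rem:PS} to reach the same contradiction.
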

\begin{proof}
Assume that $A$ is a ${2^\varepsilon}$-ring, and let $\psi$ be the map in~\eqref{eq:psi}. Then for $\alpha=\psi(\zeta_{2^\varepsilon})$,  we have $\alpha^{2^{\epsilon-1}}=-1$.

Assume now that (2) holds, and let
$\varphi_\alpha\colon \Z[x]\to A$ be defined by $p(x)=p(\alpha)$. The  polynomial $\mu(x)=x^{2^{\varepsilon-1}}+1$ belongs to the kernel of $\varphi_\alpha$. Now, $\mu(x)$ is the cyclotomic polynomial of the of the $2^\varepsilon$th roots of units, so it is irreducible. By~\cite[Lemma 5.2]{dcdBLMS}, we obtain that $\ker(\varphi_\alpha)=(\mu(x))$, so $\Z[\zeta_{2^{\varepsilon}}]\cong\Z[x]/(\mu(x))\hookrightarrow A$.
\end{proof}

 \begin{corollary}
\label{cor:epsilon-group}
Let  $A$ be a TN ring. 
If $\varepsilon(A^*)=\varepsilon$, then $A$ is a $2^\varepsilon$-ring.
\end{corollary}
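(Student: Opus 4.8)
The plan is to reduce to the characterisation of $2^\varepsilon$-rings given in Proposition~\ref{prop:epsilon-1}, and then to verify the remaining condition by an elementary computation inside the abelian group $\ator$. By Proposition~\ref{prop:epsilon-1}, showing that $A$ is a $2^\varepsilon$-ring is equivalent to exhibiting an element $\alpha\in A$ with $\alpha^{2^{\varepsilon-1}}=-1$. Since $A$ is a TN ring it has characteristic $0$, so $-1$ is a unit of order $2$; being a $2$-element, it lies in the Sylow $2$-subgroup $S$ of the finite abelian group $\ator$. Hence it suffices to find $\alpha\in S$ whose $2^{\varepsilon-1}$-th power equals $-1$, that is, to prove that $-1$ is a $2^{\varepsilon-1}$-th power in $S$.

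First I would fix a decomposition $S\cong\prod_{i=1}^m \Z/2^{k_i}\Z$ into cyclic groups of order a power of $2$, switching to additive notation on the right-hand side. By the definition of $\varepsilon(A^*)=\varepsilon$ as the \emph{smallest} exponent occurring among these factors, we have $k_i\ge\varepsilon$ for every $i$. Under the chosen isomorphism, $-1$ corresponds to a tuple $(c_1,\dots,c_m)$, and since $-1$ has order $2$, each $c_i$ is either $0$ or the unique element $2^{k_i-1}$ of order $2$ in $\Z/2^{k_i}\Z$.

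The heart of the matter is then the single observation that each such coordinate is divisible by $2^{\varepsilon-1}$: indeed $2^{k_i-1}=2^{\varepsilon-1}\cdot 2^{k_i-\varepsilon}$ with $k_i-\varepsilon\ge 0$, precisely because $\varepsilon$ is the minimal exponent. Choosing $d_i=0$ when $c_i=0$ and $d_i=2^{k_i-\varepsilon}$ when $c_i=2^{k_i-1}$ produces an element $\alpha\in S$ with $2^{\varepsilon-1}(d_1,\dots,d_m)=(c_1,\dots,c_m)$, that is, $\alpha^{2^{\varepsilon-1}}=-1$. As $\alpha\in S\subseteq\ator\subseteq A^*$, Proposition~\ref{prop:epsilon-1} now gives that $A$ is a $2^\varepsilon$-ring.

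I do not anticipate a genuine obstacle: once Proposition~\ref{prop:epsilon-1} is invoked, the statement becomes the elementary fact that in a finite abelian $2$-group every element of order dividing $2$ is a $2^{\varepsilon-1}$-th power, where $2^\varepsilon$ is the order of the smallest cyclic factor. The only points needing care are getting the inequality $k_i\ge\varepsilon$ the right way round — this is exactly where $\varepsilon$ being the minimum rather than the maximum exponent is used — and the degenerate case $\varepsilon=1$, in which $2^{\varepsilon-1}=1$, $\alpha=-1$ works, and being a $2$-ring is the vacuous assertion $\Z=\Z[\zeta_2]\subseteq A$.
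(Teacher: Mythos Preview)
Your proof is correct and follows essentially the same approach as the paper: both reduce via Proposition~\ref{prop:epsilon-1} to the claim that $-1$ is a $2^{\varepsilon-1}$-th power in $A^*$, and both justify this by the elementary fact that in a finite abelian $2$-group whose smallest cyclic factor has order $2^{\varepsilon}$, every element of order $2$ is a $2^{\varepsilon-1}$-th power. The paper merely asserts this group-theoretic fact in one line, whereas you spell out the coordinate computation explicitly.
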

\begin{proof}
In a finitely generated abelian group $G$ with $\varepsilon(G)=\varepsilon$, every element of order $2$ is a $2^{\varepsilon-1}$-power. This implies that $-1$ is necessarily a $2^{\varepsilon-1}$-power in $A^*$, and the result follows from Proposition~\ref{prop:epsilon-1}.
\end{proof}

 \subsection{The short exact sequence}
  We now analyse the information we can get from the exact sequence of Proposition~\ref{prop:successioneesatta}, when $A$ is a TN ring and $\mathfrak I=\nt$:
 \begin{equation}
\label{eq:es-nt}
1\to 1+\nt\hookrightarrow A^*{\to}\left(A/\nt\right)^*\to 1.
\end{equation}
As already mentioned, this choice is particularly interesting since as  $A$ is TN, the torsion ideal of $A$ is contained into $\mathfrak N$, whence in its torsion part $\nt$. Therefore, $A/\nt$ is a torsion-free ring and the finitely generated abelian groups realisable in this case  are known by \cite{JLMS}. 

We begin by presenting a consequence of Proposition~\ref{prop:epsilon-1}. 

\begin{corollary}
\label{cor:B2allaepsilon}
Let $A$ be a TN ring. If $A$ is a $2^\varepsilon$-ring, then $A/\nt$ is a  $2^\varepsilon$-ring.
\end{corollary}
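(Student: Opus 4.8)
The statement to prove is Corollary~\ref{cor:B2allaepsilon}: if $A$ is a TN ring which is a $2^\varepsilon$-ring, then the quotient $A/\nt$ is also a $2^\varepsilon$-ring. The plan is to exploit the characterisation of $2^\varepsilon$-rings given in Proposition~\ref{prop:epsilon-1}, which says that a TN ring is a $2^\varepsilon$-ring precisely when it contains an element $\alpha$ with $\alpha^{2^{\varepsilon-1}}=-1$. So the whole task reduces to transporting such an element from $A$ to $A/\nt$ and checking that the relevant equation survives the passage to the quotient.

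\begin{proof}
Since $A$ is a TN ring, it has characteristic $0$ by Remark~\ref{rem:PS}, and the same holds for the quotient $A/\nt$: indeed $A/\nt$ is a torsion-free ring, because the torsion ideal of $A$ is contained in $\n$ and hence in its torsion part $\nt$, so in particular $A/\nt$ has characteristic $0$ and is a TN ring itself. Thus Proposition~\ref{prop:epsilon-1} applies to both $A$ and $A/\nt$.

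As $A$ is a $2^\varepsilon$-ring, Proposition~\ref{prop:epsilon-1} provides an element $\alpha\in A$ such that $-1=\alpha^{2^{\varepsilon-1}}$. Let $\pi\colon A\to A/\nt$ denote the canonical projection, and set $\bar\alpha=\pi(\alpha)$. Applying $\pi$ to the identity $\alpha^{2^{\varepsilon-1}}=-1$ and using that $\pi$ is a ring homomorphism, we obtain
\begin{equation*}
	\bar\alpha^{2^{\varepsilon-1}}=\pi(\alpha)^{2^{\varepsilon-1}}=\pi\bigl(\alpha^{2^{\varepsilon-1}}\bigr)=\pi(-1)=-1,
\end{equation*}
where the last equality holds because $\pi$ sends $1$ to the identity of $A/\nt$ and $A/\nt$ has characteristic $0$, so that $-1\ne 1$ there. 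Hence $A/\nt$ contains an element, namely $\bar\alpha$, whose $2^{\varepsilon-1}$th power equals $-1$.

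By the implication $(2)\Rightarrow(1)$ of Proposition~\ref{prop:epsilon-1}, applied to the TN ring $A/\nt$, we conclude that $A/\nt$ is a $2^\varepsilon$-ring, as claimed.
\end{proof}

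The argument is entirely formal, and I do not expect any genuine obstacle. The only point requiring a moment of care is confirming that $A/\nt$ is again a TN ring of characteristic $0$ so that Proposition~\ref{prop:epsilon-1} is legitimately available for the quotient; this is immediate from the definition of TN ring together with the observation, already recorded in the discussion preceding~\eqref{eq:es-nt}, that $A/\nt$ is torsion-free.
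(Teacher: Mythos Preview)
Your proof is correct and follows essentially the same approach as the paper: both obtain $\alpha\in A$ with $\alpha^{2^{\varepsilon-1}}=-1$ from Proposition~\ref{prop:epsilon-1}, reduce modulo $\nt$, and apply Proposition~\ref{prop:epsilon-1} again to $A/\nt$. Your version is simply more explicit in verifying that $A/\nt$ is itself a TN ring so that the proposition applies there.
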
  

\begin{proof}
By Proposition~\ref{prop:epsilon-1} there exists $\alpha\in A$ such that 
$-1=\alpha^{2^{k-1}}$, then the same equality holds modulo $\nt$, so $A/\nt$ is a  $2^\varepsilon$-ring. 
\end{proof}

\begin{remark}
	The previous corollary also shows that the copy of $\Z[\zeta_{2^\varepsilon}]$ contained in $A$ projects to a copy of $\Z[\zeta_{2^\varepsilon}]$ under the natural projection
	\begin{equation*}
		A\to A/\nt.
	\end{equation*}
\end{remark}

We derive a result in relation to finite group of units.

\begin{prop}\label{prop:k<2}
Let  $A$ be a TN ring such that $A^*$ is finite. Then $\varepsilon(A^*)\le 2$. In particular, if $-1$ is a $2^k$-power, then $k\le 1$.  
\end{prop}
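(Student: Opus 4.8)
The plan is to show that the value of $\varepsilon(A^*)$ forces $A$ to contain a cyclotomic subring $\Z[\zeta_{2^m}]$ whose unit group is already too large unless $m$ is small. The central observation, which I would isolate as a preliminary claim, is: \emph{if a TN ring $A$ with $A^*$ finite is a $2^m$-ring, then $m\le 2$.}

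To prove this claim, I would recall that by definition a $2^m$-ring contains a subring isomorphic to $\Z[\zeta_{2^m}]$, and that the unit map $\psi\colon\Z[\zeta_{2^m}]\to A$ is an injective ring homomorphism. Consequently $\psi$ restricts to an injective group homomorphism $\Z[\zeta_{2^m}]^*\hookrightarrow A^*$, so $A^*$ contains a copy of the unit group of $\Z[\zeta_{2^m}]$. Since $A^*$ is finite, this unit group must be finite, hence of rank $0$. By Dirichlet's unit theorem the rank of $\Z[\zeta_{2^m}]^*$ equals $r_1+r_2-1$; as $\Q(\zeta_{2^m})$ is totally complex for $m\ge 2$ one has $r_1=0$ and $r_2=\phi(2^m)/2=2^{m-2}$, so the rank equals $2^{m-2}-1$. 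For $m\ge 3$ this is strictly positive, contradicting the finiteness of $A^*$; hence $m\le 2$, proving the claim.

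With the claim in hand, both assertions follow immediately. Writing $\varepsilon=\varepsilon(A^*)$, Corollary~\ref{cor:epsilon-group} shows that $A$ is a $2^\varepsilon$-ring, so the claim gives $\varepsilon\le 2$. For the final statement, if $-1=\alpha^{2^k}$ for some $\alpha\in A$, then Proposition~\ref{prop:epsilon-1} (applied with $\varepsilon-1=k$) shows that $A$ is a $2^{k+1}$-ring; the claim then yields $k+1\le 2$, that is, $k\le 1$.

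There is no serious obstacle here: the argument is a direct application of the structural results already established (the identification of $2^m$-rings with $\Z[\zeta_{2^m}]$-algebras, and Corollary~\ref{cor:epsilon-group}) together with the Dirichlet rank computation. The only point that requires care is the exact rank formula for $\Z[\zeta_{2^m}]^*$ and the verification that it vanishes precisely for $m\le 2$, the borderline case $m=2$ being $\Z[i]$, whose unit group $\{\pm 1,\pm i\}$ is indeed finite.
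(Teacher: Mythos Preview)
Your proof is correct and follows essentially the same route as the paper: use Corollary~\ref{cor:epsilon-group} to embed $\Z[\zeta_{2^\varepsilon}]$ in a ring whose unit group is finite, then invoke Dirichlet's Unit Theorem to force $\varepsilon\le 2$, and finish the second assertion via Proposition~\ref{prop:epsilon-1}. The only cosmetic difference is that the paper first passes to the torsion-free quotient $B=A/\nt$ (via Corollary~\ref{cor:B2allaepsilon}) before applying Dirichlet, whereas you work directly in $A$; since $A^*$ itself is assumed finite, your more direct version is perfectly valid and slightly cleaner.
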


\begin{proof}
Write $\varepsilon(A^*)= \varepsilon$. By Corollaries~\ref{cor:epsilon-group} and~\ref{cor:B2allaepsilon}, we deduce that the ring $\Z[\zeta_{2^{\varepsilon}}]$ injects in $B=A/\nt$; this injection can be restricted to the groups of units. Now, $B^*\cong A^*/1+\nt$ is finite, so  $\Z[\zeta_{2^{k+1}}]^*$ is also finite, and  the Dirichlet's Units Theorem (see~\cite[Theorem 37]{FT93}) implies that  $\varepsilon(A^*)\le 2$. The final assertion follows from Proposition~\ref{prop:epsilon-1}.
\end{proof}

The other group appearing in \eqref{eq:es-nt} is $1+\nt$, which is the adjoint group of the radical ring $\nt$. In some cases,   the methods developed in Section~\ref{section: radical} allows us to determine its structure. In order to do so, we need to restrict the attention to some particular characteristic zero (and TN) rings. In light of Proposition~\ref{prop:ABC}, this will not be a limitation in what follows.

\begin{prop}\label{prop:rango}

Let $A$ be a TN ring that is finitely generated and integral over $\Z$. The following hold:
\begin{enumerate}
\item $\nt$ is a finite radical ring.
\item $\rank A^*=\rank (A/\nt)^*$.
\end{enumerate}
 \end{prop}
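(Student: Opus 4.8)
The statement to prove is Proposition~\ref{prop:rango}: for a TN ring $A$ finitely generated and integral over $\Z$, the torsion subgroup $\nt$ of the nilradical is a finite radical ring, and $\rank A^* = \rank(A/\nt)^*$. My plan is to establish the finiteness claim first, since the rank equality will follow from it together with the exact sequence~\eqref{eq:es-nt}.

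For part (1), the key observation is that $A$, being finitely generated and integral over $\Z$, is a finitely generated $\Z$-module; in particular its additive group is a finitely generated abelian group, so its torsion subgroup $A_{\tors}$ is finite. Since $A$ is a TN ring, the torsion ideal is contained in $\n$, and hence $\nt = \n \cap A_{\tors} = A_{\tors} \cap \n$ is a subgroup of the finite group $A_{\tors}$, so it is finite. That $\nt$ is an ideal follows because it is the intersection of the ideal $\n$ with the torsion subgroup (which is itself an ideal, being closed under multiplication by ring elements); being an ideal consisting of nilpotent elements, $\nt$ is a radical ring by the remark in Section~\ref{section: radical} that every ideal of $A$ consisting of nilpotent elements is radical.

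For part (2), I would apply Proposition~\ref{prop:successioneesatta} with $\mathfrak I = \nt$ to obtain the exact sequence
\begin{equation*}
1\to 1+\nt\hookrightarrow A^*\to (A/\nt)^*\to 1.
\end{equation*}
By part (1) the group $1+\nt$ is finite, as it is in bijection with the finite set $\nt$. Taking ranks in a short exact sequence of finitely generated abelian groups is additive, and a finite group has rank $0$; hence $\rank A^* = \rank(1+\nt) + \rank(A/\nt)^* = \rank(A/\nt)^*$, which is the desired equality.

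\textbf{Main obstacle.}
I expect the proof to be essentially routine once the finiteness of $A_{\tors}$ is in hand, so the only delicate point is justifying that $A$ is a finitely generated $\Z$-module (equivalently that $A_{\tors}$ is finite) from the hypothesis that $A$ is finitely generated \emph{as a ring} and integral over $\Z$. This is the standard fact that a ring which is finitely generated as an algebra over a Noetherian ring and integral over it is finitely generated as a module; I would invoke it directly rather than reprove it. One should also check that additivity of rank in the exact sequence is legitimate, which it is because all three groups are finitely generated abelian and rank is additive on short exact sequences of such groups.
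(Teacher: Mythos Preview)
Your proposal is correct and follows essentially the same approach as the paper: both argue that finitely generated plus integral over $\Z$ implies $A$ is a finitely generated $\Z$-module, whence $\nt$ is finite and a radical ring (as an ideal of nilpotent elements), and then both use the exact sequence~\eqref{eq:es-nt} together with the finiteness of $1+\nt$ to deduce the rank equality. The only cosmetic difference is that the paper writes the quotient $A^*/(1+\nt)$ explicitly as $(A^*_{\tors}/(1+\nt))\times\Z^r$, whereas you invoke additivity of rank in short exact sequences; these are equivalent.
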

\begin{proof}
As $A$ is finitely generated and integral over $\Z$, it is also finitely generated as $\Z$-module, and the same holds for its subgroup $\n$. It follows that its torsion part $\nt$ is finite. In addition,  $\nt$ is an ideal of $A$ consisting of nilpotent elements, therefore it is a finite radical ring. In particular, $1+\nt\subseteq A^*_{\tors}$ is a finite abelian group, and by
 \begin{equation*}
\left(\frac{A}{\nt}\right)^*\cong \frac{A^*}{1+\nt}\cong \frac{A^*_{\tors}}{1+\nt}\times \Z^r,
\end{equation*}
 where $r$ is the rank of the group $A^*$, we obtain that
\begin{equation*}
	\rank A^*=\rank(A/\nt)^*. \qedhere
\end{equation*}
\end{proof}

The same argument of the proof of the previous proposition, in the same hypothesis, shows the exactness of the sequence 
\begin{equation}
\label{eq:se-tors}
1\to 1+\nt\hookrightarrow A^*_{\tors}{\to}\left(A/\nt\right)^*_{\tors}\to 1.
\end{equation}

\begin{notation}
	Given a prime $p$, we write $T_p$ for the Sylow $p$-subgroup of a finite abelian group $T$.
\end{notation}
 Consider the Sylow $p$-subgroup $\ntp$ of $\nt$. It is easy to check that $\ntp$ is an  ideal of  $\nt$; it follows that $\ntp$ is again a radical ring, so $1+{\ntp}$ is a subgroup of $1+\nt$ and, by cardinality, we deduce that $(1+\nt)_p=1+\ntp$. In particular, also the sequence
\begin{equation}
\label{eq:se-sylow}
1\to 1+{\ntp}\hookrightarrow A^*_{\tors,p}{\to}\left(A/\nt\right)^*_{\tors,p}\to 1
\end{equation}
is exact. 
We want now to study  the structure of $1+\ntp$, in the case when Theorem~\ref{theorem: main} applies. First, we mention a consequence of~\cite[Proposition 4.10]{DC23}.

\begin{lemma}\label{lemma: restriction of scalars tn}
		Let $A$ be a TN ring, and write $\varepsilon(A^*)=\varepsilon$. Then every finite $A$-module of order a power of a prime $p$ is also a $\Z_p(\lambda(p,2^{\varepsilon}))$-module.
\end{lemma}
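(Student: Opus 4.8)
The plan is to reduce the statement to Lemma~\ref{lemma: restriction of scalars}, which already settles the case of a finite local ring, by passing through the cyclotomic ring $\Z[\zeta_{2^\varepsilon}]$ and its reduction modulo a power of $p$. Throughout, $p$ is odd, as is implicit in the notation $\lambda(p,2^\varepsilon)$, which presupposes that $p$ is coprime to $2^\varepsilon$.

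First I would invoke Corollary~\ref{cor:epsilon-group}: since $\varepsilon(A^*)=\varepsilon$, the ring $A$ is a $2^\varepsilon$-ring, hence a $\Z[\zeta_{2^\varepsilon}]$-algebra. Consequently, a finite $A$-module $M$ of order a power of $p$ becomes a finite $\Z[\zeta_{2^\varepsilon}]$-module by restriction of scalars. Since $M$ is finite of $p$-power order, there is an $N$ with $p^N M=0$, so this action factors through the finite quotient $R=\Z[\zeta_{2^\varepsilon}]/p^N\Z[\zeta_{2^\varepsilon}]$.

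Next I would analyse $R$. Writing $p\Z[\zeta_{2^\varepsilon}]=\prod_{i} P_i$ as a product of pairwise distinct primes, where all exponents equal $1$ because $p$ is coprime to $2^\varepsilon$ and hence unramified, Proposition~\ref{proposition: cyclotomic quotients} gives a ring isomorphism $R\cong \prod_i \Z[\zeta_{2^\varepsilon}]/P_i^N$ in which each factor $R_i=\Z[\zeta_{2^\varepsilon}]/P_i^N$ is a finite commutative local ring of $(p,\lambda(p,2^\varepsilon))$-type. The orthogonal idempotents of the product ring $R$ then split $M$ as a direct sum $M=\bigoplus_i M_i$, where $M_i$ is a module over the corresponding factor $R_i$. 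Applying Lemma~\ref{lemma: restriction of scalars} to each local ring $R_i$, every $M_i$ is a $\Z_p(\lambda(p,2^\varepsilon))$-module by restriction of scalars, and therefore so is their direct sum $M$, which proves the claim.

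The only delicate point will be the identification of the local factors $R_i$ as rings of $(p,\lambda(p,2^\varepsilon))$-type; this rests on the fact that the residue degree of every prime of $\Z[\zeta_{2^\varepsilon}]$ lying over an odd $p$ equals the order $\lambda(p,2^\varepsilon)$ of $p$ modulo $2^\varepsilon$, which is exactly what Proposition~\ref{proposition: cyclotomic quotients} records. Everything else is the routine bookkeeping of restriction of scalars and of decomposing a module over a finite product of rings.
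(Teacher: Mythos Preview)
Your proof is correct and follows essentially the same route as the paper: pass to a $\Z[\zeta_{2^\varepsilon}]$-module via Corollary~\ref{cor:epsilon-group}, factor through a finite quotient, decompose that quotient as a product of finite local rings of $(p,\lambda(p,2^\varepsilon))$-type via Proposition~\ref{proposition: cyclotomic quotients}, and then invoke the local case. The only difference is cosmetic: the paper quotes the external \cite[Proposition~4.10]{DC23} for the last step, whereas you spell it out using the idempotent decomposition together with the paper's own Lemma~\ref{lemma: restriction of scalars}, making your argument slightly more self-contained.
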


\begin{proof}
	Let $N$ be a finite $A$-module of order a power of $p$. If $\varepsilon(A^*)=\varepsilon$, then $A$ is a $2^{\varepsilon}$-ring by Corollary~\ref{cor:epsilon-group}. In particular, $N$ is a $\Z[\zeta_{2^{\varepsilon}}]$-module. Denote by $I$ the annihilator of $N$ in $\Z[\zeta_{2^{\varepsilon}}]$, so that $N$ is a faithful $\Z[\zeta_{2^{\varepsilon}}]/I$-module. By Proposition~\ref{proposition: cyclotomic quotients},  
	\begin{equation*}
		\frac{\Z[\zeta_{2^\varepsilon}]}{I}\cong \prod_{i=1}^n \frac{\Z[\zeta_{2^{\varepsilon}}]}{P_i^{a_i}},
	\end{equation*}
	where each term is a finite commutative local ring of type $(p,\lambda(p,2^{\varepsilon}))$. We are in position to apply~\cite[Proposition 4.10]{DC23}, which yields that $N$ is a $\Z_p(\lambda(p,2^{\varepsilon}))$-module. 
\end{proof}

This yields a  description of  the structure of $\ntp$.

\begin{corollary}\label{corollary: module brace and tn}
Let $A$ be a TN ring that is finitely generated and integral over $\Z$, and write $\varepsilon(A^*)=\varepsilon$. If $p$ is an odd prime, then
 $\ntp$ is the $\lambda(p,2^{\varepsilon})$-power of an abelian $p$-group.
\end{corollary}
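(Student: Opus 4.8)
The plan is to deduce this corollary by combining the structural result on $A$-modules just established (Lemma~\ref{lemma: restriction of scalars tn}) with the general module-theoretic fact about $\Z_p(\lambda)$-modules proved early in the paper (Proposition~\ref{proposition: a Zp-module is lambda power}). The key observation is that $\ntp$ is itself a finite $A$-module of order a power of $p$, so the hypotheses of Lemma~\ref{lemma: restriction of scalars tn} apply to it directly.

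First I would verify that $\ntp$ qualifies as a finite $A$-module of order a power of the odd prime $p$. By Proposition~\ref{prop:rango}(1), since $A$ is finitely generated and integral over $\Z$, the ring $\nt$ is a finite radical ring; its Sylow $p$-subgroup $\ntp$ is an ideal of $\nt$ (as noted in the discussion preceding the corollary) and hence a finite abelian $p$-group. Crucially, $\ntp$ is an ideal of $A$: it is the $p$-primary part of the ideal $\nt$, and multiplication by elements of $A$ preserves $p$-power order, so $\ntp$ inherits the $A$-module structure. Thus $\ntp$ is a finite $A$-module of order a power of $p$.

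Next, writing $\varepsilon(A^*)=\varepsilon$ and setting $\lambda=\lambda(p,2^{\varepsilon})$, I would apply Lemma~\ref{lemma: restriction of scalars tn} to conclude that $\ntp$ is a $\Z_p(\lambda)$-module. At this point the abstract structure theorem takes over: by Proposition~\ref{proposition: a Zp-module is lambda power}, every $\Z_p(\lambda)$-module is the $\lambda$-power of an abelian $p$-group. Applying this to $N=\ntp$ gives immediately that $\ntp$ is the $\lambda(p,2^{\varepsilon})$-power of an abelian $p$-group, which is exactly the claim.

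I do not expect any serious obstacle here, as the corollary is essentially a packaging of two previously established results. The only point requiring a moment of care is confirming that $\ntp$ is genuinely an $A$-module (not merely an abelian group or a $\nt$-module) so that Lemma~\ref{lemma: restriction of scalars tn}, which is stated for $A$-modules, can be invoked; this follows because $\ntp$ is an ideal of $A$. The role of the hypothesis that $p$ is odd is somewhat subtle: it is consistent with the earlier remark that the $\lambda$-small machinery leaves the prime $2$ untouched, and it ensures $\lambda(p,2^{\varepsilon})$ is well defined (as $p$ is then coprime to $2^{\varepsilon}$), so I would flag that the coprimality of $p$ and $2^{\varepsilon}$ is what makes $\lambda(p,2^{\varepsilon})$ meaningful via the Notation preceding Proposition~\ref{proposition: cyclotomic quotients}.
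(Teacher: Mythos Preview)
Your proposal is correct and follows essentially the same route as the paper: invoke Proposition~\ref{prop:rango} to see $\ntp$ is finite, use Lemma~\ref{lemma: restriction of scalars tn} to get the $\Z_p(\lambda(p,2^{\varepsilon}))$-module structure, and conclude via Proposition~\ref{proposition: a Zp-module is lambda power}. Your additional care in checking that $\ntp$ is genuinely an $A$-module (as an ideal of $A$) and in flagging why $p$ odd makes $\lambda(p,2^{\varepsilon})$ well defined is appropriate and slightly more explicit than the paper's version.
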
 

\begin{proof}
By Proposition~\ref{prop:rango} and Lemma~\ref{lemma: restriction of scalars tn},  the radical ring $\ntp$ is a finite module over  $\Z_p(\lambda(p,2^{\varepsilon}))$ of order a power of $p$, so we can apply Proposition~\ref{proposition: a Zp-module is lambda power} to derive the assertion.\end{proof}

We arrive to the desired result.

\begin{prop}\label{proposition: tn small}
Let $A$ be a TN ring that is finitely generated and integral over $\Z$, and write $\varepsilon(A^*)=\varepsilon$.  Let $p$ be an odd prime such that one between $\ntp$ and $1+\ntp$ is $\lambda(p,2^{\varepsilon})$-small. Then $\ntp\cong 1+\ntp$. In particular, $1+\ntp$ is the $\lambda(p,2^{\varepsilon})$-power of an abelian $p$-group.
\end{prop}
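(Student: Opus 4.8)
The plan is to recognise this proposition as a direct application of Theorem~\ref{theorem: main}, taking for the principal ideal domain $D$ the ring $\Z_p(\lambda(p,2^{\varepsilon}))$, together with Corollary~\ref{corollary: module brace and tn} for the final assertion. Writing $\lambda=\lambda(p,2^{\varepsilon})$ throughout, the whole argument reduces to checking that the hypotheses of Theorem~\ref{theorem: main} are met for $N=\ntp$ with this choice of $D$.

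First I would record that $D=\Z_p(\lambda)$ satisfies the requirements of Theorem~\ref{theorem: main}: by the properties of $\Z_p(\lambda)$ recalled in Section~\ref{sec:prel}, it is a discrete valuation ring, hence a PID, its maximal ideal $pD$ is prime, and the residue field is $D/pD\cong \F_{p^{\lambda}}$, so that $[D/pD:\F_p]=\lambda$. Crucially, this is exactly the $\lambda$ appearing in the notion of $\lambda$-smallness (Definition~\ref{def:lambdasmall}) used in the statement, so the smallness hypothesis here matches the smallness hypothesis required there.

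Next I would verify that $N=\ntp$ is a commutative radical ring of $p$-power order which is a $D$-module. By Proposition~\ref{prop:rango}, $\nt$ is a finite radical ring; since $\ntp$ is its Sylow $p$-subgroup it has order a power of $p$, and it is in fact an ideal of $A$ (for $a\in A$ and $x\in\ntp$ the additive order of $ax$ divides that of $x$, hence is a power of $p$), so it is a commutative radical ring consisting of nilpotent elements. That it is a $\Z_p(\lambda)$-module is precisely Lemma~\ref{lemma: restriction of scalars tn}, applied to the finite $A$-module $\ntp$ of $p$-power order. With all hypotheses in place, Theorem~\ref{theorem: main} yields $\ntp\cong 1+\ntp$.

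Finally, for the \say{in particular} clause I would invoke Corollary~\ref{corollary: module brace and tn}, which already states that $\ntp$ is the $\lambda$-power of an abelian $p$-group; transporting this along the isomorphism $\ntp\cong 1+\ntp$ gives the same conclusion for $1+\ntp$. I do not anticipate a genuine obstacle here, as the content is essentially an assembly of the results developed earlier in the section; the only point requiring care is the matching of the index $\lambda=\lambda(p,2^{\varepsilon})$ between the restriction-of-scalars lemma, the residue degree of $D$, and the definition of smallness, all of which agree by construction.
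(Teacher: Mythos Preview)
Your proposal is correct and follows essentially the same approach as the paper's own proof: both apply Theorem~\ref{theorem: main} with $D=\Z_p(\lambda(p,2^{\varepsilon}))$ after invoking Proposition~\ref{prop:rango} and Lemma~\ref{lemma: restriction of scalars tn} to verify the hypotheses, and then use Corollary~\ref{corollary: module brace and tn} for the final clause. Your write-up is simply more explicit in checking the PID, prime-ideal, and residue-degree conditions on $D$, which the paper leaves implicit.
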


\begin{proof}
	By Proposition~\ref{prop:rango} and Lemma~\ref{lemma: restriction of scalars tn},  the radical ring $\ntp$ is a finite module over  $\Z_p(\lambda(p,2^{\varepsilon}))$ of order a power of $p$. In particular, we can apply Theorem~\ref{theorem: main} to prove that if one between $\fm$ and $1+\fm$ is $\lambda(p,2^{\varepsilon})$-small, then $\fm\cong 1+\fm$. By Corollary~\ref{corollary: module brace and tn}, we conclude that if this is the case, then $1+\mathfrak{m}$ is the $\lambda(p,2^{\varepsilon})$-power of an abelian $p$-group.
\end{proof}

\section{Realisability in the class of TN rings}\label{section: tn units}
The results of the previous section can be used to characterise the finitely generated abelian groups, with some additional condition on the torsion part, that can be realised in the class of TN rings. 

As recalled in Theorem~\ref{teo:thm5.1JLMS},  every finite abelian group of even cardinality $T$ is  realisable as the torsion part of  a finitely generated abelian group of units already within the class of torsion-free rings. In the torsion-free case, the structure of $T$ gives a lower bound $g(T)$ for the possible rank of the group of units, and our aim is to understand---at least in some cases---which is the minimum rank $r(T)$ for which the group 
$$T\times \Z^{r(T)}$$ 
is realisable in the wider class of TN rings.
Clearly $r(T)\le g(T)$. 

In order to find the value of $r(T)$, on the one hand, we need  to show that every TN ring $A$ with $A^*\cong T\times \Z^r$ satisfies 
\begin{equation*}
	r\ge r(T),
\end{equation*}
on the other hand, we have to construct a ring $A$ with 
\begin{equation*}
	A^*\cong T\times \Z^{r(T)}.
\end{equation*}
We proceed with the former in the next subsection, thanks to our result controlling the shape of $1+\nt$, given a suitable TN ring $A$.

The latter is instead taken care of in the second subsection, where we present a construction not only to realise $T\times \Z^{r(T)}$, but also $T\times \Z^{r(T)+l}$ for all $l$, meaning that $T\times \Z^{r}$ is realisable in the class of TN rings if and only if $r\ge r(T)$.

As we mentioned before, our results concern a restricted class of abelian groups $T$, that we now describe.
 First, we assume that $T_2$ is cyclic, say $T_2\cong \Z/2^{\varepsilon}\Z$ with $\varepsilon\ge 1$. Now write $\lambda_{\ell}=\lambda(\ell,2^{\varepsilon})$ for all odd primes $\ell$. The second assumption is that if $p_1,\ldots, p_{s_0}$ are the distinct odd prime divisors of $|T|$ such that $T_{p_i}$ is not a $\lambda_{p_i}$-power, then 
\begin{equation*}
	\Prank T_{p_i}<\lambda_{p_i}. 
\end{equation*}
(This assumption is always satisfied if $T$ is cyclic.) If $q_0,\ldots,q_{t_0}$ are the remaining distinct odd prime divisors of $T$, then we can write $T_{q_j}=(V_{q_j})^{\lambda_{q_j}}$, and therefore
\begin{equation*}
	T\cong \Z/2^\varepsilon\Z\times \prod_{i=1}^{s_0} T_{p_i} \times \prod_{j=0}^{t_0} (V_{q_j})^{\lambda_{q_j}}
\end{equation*}
When $s_0=1$, we simply write $p_1=p$ and
\begin{equation*}
	T_p\cong \prod_{i=1}^{v}\Z/p^{a_i}\Z,
\end{equation*}
where $1\le a_1\le\cdots\le a_v$. 

We denote this class of groups as $\geps$ (recall that $\varepsilon \ge 1$), and for groups of this class we will always use the notation given above.

\subsection{Bounding from below}

We begin by presenting a first lower bound.
\begin{prop}\label{prop:lb1}
	Let $T\in\geps$. Then 
	\begin{equation*}
	r(T)\ge g\left(\Z/2^{\varepsilon}\Z \times \prod_{i=1}^{s_0} T_{p_i}\right).
\end{equation*}
\end{prop}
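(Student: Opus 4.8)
The plan is to exploit the short exact sequence~\eqref{eq:es-nt} and the invariance of rank under quotienting by $\nt$ (Proposition~\ref{prop:rango}). By Corollary~\ref{cor:sample-rings} and the associated reduction, to compute $r(T)$ it suffices to range over TN rings $A$ that are finitely generated and integral over $\Z$ with $A^*\cong T\times\Z^r$; for any such $A$ we have $\rank A^*=\rank(A/\nt)^*$ by Proposition~\ref{prop:rango}(2), and $A/\nt$ is a torsion-free ring. So the strategy is: start from a minimal-rank realisation $A$, pass to $B=A/\nt$, and show that the torsion part of $B^*$ is large enough that Theorem~\ref{teo:thm5.1JLMS} forces the rank of $B^*$ (hence of $A^*$) to be at least the asserted value of $g$.

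First I would identify the torsion subgroup $T'=(B^*)_{\tors}$ of the torsion-free ring $B=A/\nt$. From the exact sequence~\eqref{eq:se-tors}, $T'\cong A^*_{\tors}/(1+\nt)=T/(1+\nt)$, and since $1+\nt$ is a finite $2$-group-free... more precisely I must track which primes survive. The key point is to show that the quotient $T'$ still contains the subgroup $\Z/2^\varepsilon\Z\times\prod_{i=1}^{s_0}T_{p_i}$, up to the structure that matters for $g$. For this I would argue prime by prime using the Sylow sequences~\eqref{eq:se-sylow}: for each odd prime $p$ the subgroup $1+\ntp$ of $A^*_{\tors,p}$ is, by Corollary~\ref{corollary: module brace and tn}, the $\lambda(p,2^\varepsilon)$-power of an abelian $p$-group, so its Prüfer rank is a multiple of $\lambda_p=\lambda(p,2^\varepsilon)$. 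For the primes $p_i$ (those with $\Prank T_{p_i}<\lambda_{p_i}$ by the defining hypothesis of $\geps$), the only multiple of $\lambda_{p_i}$ not exceeding $\Prank T_{p_i}$ is $0$, so $1+\mathfrak N_{\tors,p_i}$ must be trivial; hence $T_{p_i}$ injects unchanged into $T'$. Similarly the $2$-part: since $A$ is a $2^\varepsilon$-ring, $B$ is too by Corollary~\ref{cor:B2allaepsilon}, so $\varepsilon(B^*)\ge\varepsilon$, guaranteeing the $\Z/2^\varepsilon\Z$ factor persists in $T'$.

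Having shown that $T''=\Z/2^\varepsilon\Z\times\prod_{i=1}^{s_0}T_{p_i}$ embeds in $T'=(B^*)_{\tors}$ as a subgroup of even order, I would invoke the monotonicity~\eqref{eq:g-crescente}, namely $g(T'')\le g(T')$ (valid here precisely because the Sylow $2$-subgroup of $T'$ is cyclic, which it is since it embeds in the cyclic $T_2\cong\Z/2^\varepsilon\Z$). Then Theorem~\ref{teo:thm5.1JLMS} applied to the torsion-free ring $B$ gives $\rank B^*\ge g(T')\ge g(T'')$, and combining with $\rank A^*=\rank B^*$ yields the desired bound. The main obstacle I anticipate is the careful bookkeeping in the second paragraph: proving that the $2$-part and each $T_{p_i}$ survive intact in the quotient $T'$, which requires both that $1+\ntp$ be forced trivial for the relevant primes (using the smallness hypothesis $\Prank T_{p_i}<\lambda_{p_i}$ together with the $\lambda_{p_i}$-power structure) and that the cyclicity of the Sylow $2$-subgroup of $T'$ be maintained so that the monotonicity~\eqref{eq:g-crescente} of $g$ is applicable.
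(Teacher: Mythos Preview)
Your approach is essentially the paper's own, but there is one misattribution that you should fix. You claim that $1+\n_{\tors,p_i}$ is a $\lambda_{p_i}$-power by Corollary~\ref{corollary: module brace and tn}, but that corollary concerns the additive group $\n_{\tors,p_i}$, not its adjoint group; in general these need not be isomorphic. The correct route---which is exactly what the paper does---is to first observe that $1+\n_{\tors,p_i}\le T_{p_i}$, so $\Prank(1+\n_{\tors,p_i})\le\Prank T_{p_i}<\lambda_{p_i}$, meaning $1+\n_{\tors,p_i}$ is $\lambda_{p_i}$-small; then Proposition~\ref{proposition: tn small} yields $\n_{\tors,p_i}\cong 1+\n_{\tors,p_i}$, and \emph{now} the $\lambda_{p_i}$-power structure transfers, forcing triviality. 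With this one-line correction your argument (including the handling of the $2$-part via Corollary~\ref{cor:B2allaepsilon}, the cyclicity of $(B^*_{\tors})_2$, and the monotonicity~\eqref{eq:g-crescente}) matches the paper's proof.
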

\begin{proof}
Let $A$ be a TN ring such that $(A^*)_{\tors}\cong T$. As we are interested in a lower bound, because of Proposition~\ref{prop:ABC}, we can assume $A$ to be of the form $A=\Z[(A^*)_{\tors}]$. Write $B=A/\nt$, and we know that  both $A$ and $B=A/\nt$ are $2^{\varepsilon}$-rings. In particular, $B$ contains (a copy of) $\Z[\zeta_{2^{\varepsilon}}]$ as a subring. In addition, as
\begin{equation*}
	\Prank \n_{\tors,p_i}< \Prank T_{p_i}< \lambda_{p_i}< \lambda_{p_i}(p-1),
\end{equation*}
we can apply Proposition~\ref{proposition: tn small} to find that $1+\n_{\tors,p_i}$ is a $\lambda_{p_i}$-power, and therefore necessarily $1+\n_{\tors,p_i}$ is trivial for all $i$.
Therefore by using the short exact sequence~\eqref{eq:se-tors}  we obtain that
\begin{equation*}
	\Z/2^{\varepsilon}\Z \times \prod_{i=1}^{s_0} T_{p_i}
\end{equation*}
injects in $B^*_{\tors}$, resulting in 
\begin{equation*}
	\rank A^*=\rank B^*\ge g(B^{*}_{\tors})\ge  g\left(\Z/2^{\varepsilon}\Z \times \prod_{i=1}^{s_0} T_{p_i}\right).
\end{equation*}
As this holds for all $A$ with $A^{*}_{\tors}\cong T$, we derive 
\begin{equation*}
	r(T)\ge g\left(\Z/2^{\varepsilon}\Z \times \prod_{i=1}^{s_0} T_{p_i}\right).\qedhere
\end{equation*}
\end{proof}

For most of the cases, this bound will be sufficient to find the value of $r(T)$. We need now to take care of a \say{sporadic} case.

\begin{prop}\label{prop: s0=1 second}
	Let $T\in\geps$. Assume also that $s_0=1$, and that there exists an odd prime  $q\ne p$ such that $T_{q}$ is a not a $\lambda(q,2^{\varepsilon}p^{a_1})$-power. Then
	\begin{equation*}
		r(T)\ge g\left(\Z/2^{\varepsilon}\Z \times T_p\right)+g(\Z/2^{\varepsilon}\Z).
	\end{equation*}
\end{prop}

\begin{proof}
Write $\rho=g\left(\Z/2^{\varepsilon}\Z \times T_p\right)+g(\Z/2^{\varepsilon}\Z)$. Note that for $\varepsilon\in\{1,2\}$,
\begin{equation*}
	\rho=g\left(\Z/2^{\varepsilon}\Z \times T_p\right),
\end{equation*}
so there is nothing to prove. Suppose now that $\varepsilon\ge 3$. We assume that there exists a TN ring $A$ such that
	\begin{equation*}
		A^{*}\cong T\times \Z^r
	\end{equation*}
	with $r< \rho$. Replacing $A$ with $\Z[(A^*)_{\tors}]$, we can assume that $A$ is integral and finitely generated by units of finite order over $\Z$, as in Proposition~\ref{prop:ABC}. 
	The same therefore holds for the torsion-free ring  $B=A/\nt$. We know that $B^{\ast}_{\tors}$ contains (a copy of) $\Z/2^{\varepsilon}\Z \times T_p$. Suppose that $B^{\ast}_{\tors}$ is strictly bigger. Then there exists an odd prime $q$ such that 
	\begin{equation*}
	\Z/2^{\varepsilon}\Z \times T_p\times \Z/q\Z \hookrightarrow	B^{\ast}_{\tors}
	\end{equation*} 
	This is a contradiction, as
	\begin{equation*}
		g\left (\Z/2^{\varepsilon}\Z \times T_p
		\times \Z/q\Z\right)>g\left (\Z/2^{\varepsilon}\Z \times T_p
		\right)+g\left (\Z/2^{\varepsilon}\Z\right).
	\end{equation*}
Therefore,
	\begin{equation*}
B^*_{\tors}\cong \Z/2^{\varepsilon}\Z \times  T_p, \qquad		1+\nt\cong \prod_{j=1}^{t_0} (V_{q_j})^{\lambda_{q_i}},
	\end{equation*}
and the short exact sequence
 \begin{equation*}
 	1\to 1+\nt\to A^{\ast}_{\tors}\to B^{\ast}_{\tors}\to 1
 \end{equation*}
 splits. 

 Now we claim that $B$ is a $2^{\varepsilon}p^{a_1}$-ring, where, $p^{a_1}$ is the order of the smaller cyclic component of $T_p$, as in the given notation. This follows from the results of \cite{JLMS}. Indeed, the torsion free ring $B$ fulfil the assumption of \cite[Proposition~4.2]{JLMS}, therefore, by the subsequent Remark~4.3,
$B$ injects in a product of cyclotomic rings $\mathcal{M}$, say, which in turn must be $B^{*}_{\tors}$-\emph{admissible} in the sense of~\cite[Section 5.1]{JLMS}.

This means that $\mathcal{M}$ has the form
\begin{equation*}
	\mathcal{M}=\prod_{i=1}^{v}\Z[\zeta_{2^{\varepsilon}p^{a_i}u_i}]\times \prod_{i=1}^{w}\Z[\zeta_{2^{\varepsilon}u'_{i}}]
\end{equation*}
for some $u_i, u'_i$.
By~\cite[Lemma 4.4]{JLMS}, 
\begin{equation*}
	\rank\mathcal{M}^*=\rank B^*=r<\rho.
\end{equation*}
This yields that  $w=0$ and $u_i=1$ for all $i$, because otherwise
 \begin{equation*}
 	\rank \mathcal{M}^*=\sum_{i=1}^{v}\frac{\phi(2^{\varepsilon}p^{a_i}u_i)-1}2 + \sum_{i=1}^{w}\frac{\phi(2^{\varepsilon}v_i)-1}2\ge \rho>\rank B^*.
 \end{equation*}
 In particular, 
 \begin{equation*}
 	\mathcal{M}=\prod_{i=1}^{v}\Z[\zeta_{2^{\varepsilon}p^{a_i}}],
 \end{equation*}
 and this implies that $\mathcal{M}^{*}_{\tors,p}\cong B^{*}_{\tors,p}$. Now note that $\mathcal{M}$ contains a subring isomorphic to $\Z[\zeta_{p^{a_1}}]$, via diagonal immersion. We deduce that the element $(\zeta_{p^{a_1}},\ldots,\zeta_{p^{a_1}})$, which has order $p^{a_1}$, is in the image of the injection
 \begin{equation*}
 	\iota\colon B\hookrightarrow\mathcal{M},
 \end{equation*}
 and therefore, taking the preimage of the copy of $\Z[\zeta_{p^{a_1}}]$ in $\mathcal{M}$, we find that $B$ also contains, as a subring, a copy of $\Z[\zeta_{p^{a_1}}]$ (and thus of $\Z[\zeta_{2^{\varepsilon}p^{a_1}}]$).

Let us call $\beta'$ the preimage of $(\zeta_{p^{a_1}},\ldots,\zeta_{p^{a_1}})$ under $\iota$, and $\beta$ the element of $A^{*}_{\tors}$ mapping to $\beta'$ via the natural projection. The map

	\begin{equation*}
	\varphi'\colon \Z[\zeta_{2^{\varepsilon}}][x]\to B,\quad x\mapsto \beta'
\end{equation*}
has therefore kernel $(\Phi_{p^{a_1}}(x))$, the ideal generated by the  $p^{a_1}$-cyclotomic polynomial. Consider now
\begin{equation*}
	\varphi\colon \Z[\zeta_{2^{\varepsilon}}][x]\to A,\quad x\mapsto \beta.
\end{equation*}
We claim that $\ker \varphi=(\Phi_{p^{a_1}}(x))$. In this case,  $A$ would be a $2^{\varepsilon}p^{a_1}$-ring, so  $\nt$ would be a module over $\Z[\zeta_{2^{\varepsilon}p^{a_1}}]$.  Since by assumption $T_q$ is not a $\lambda(q,2^{\varepsilon}p^{a_1})$-power, then it can not be a subgroup of $1+\nt$, giving a contaddiction (recall Proposition~\ref{proposition: cyclotomic quotients}).

We just need to prove the claim. First, observe that as $\beta$ is an element of multiplicative order $p^{a_1}$, we have $x^{p^{a_1}}-1\in\ker\varphi$. Second, note that there exists a commutative diagram
\begin{equation*}
	\begin{tikzcd}
	{\Z[\zeta_{2^{\varepsilon}}][x]} & A \\
	& B
	\arrow["{\varphi'}"', from=1-1, to=2-2]
	\arrow["\varphi", from=1-1, to=1-2]
	\arrow["\pi", from=1-2, to=2-2]
\end{tikzcd}
\end{equation*}
where $\pi\colon A\to A/\nt=B$ is the obvious projection. This implies that
\begin{equation*}
	\ker \varphi \subseteq \varphi^{-1}(\nt)= \ker \varphi'= (\Phi_{p^{a_1}}(x)).
\end{equation*} 
We just need to show that $\varphi(\Phi_{p^{a_1}}(x))=0$. We know that
\begin{equation*}
	\varphi(\Phi_{p^{a_1}}(x))\in \nt,
\end{equation*} 
which means that $d\Phi_{p^{a_1}}(x)\in \ker\varphi$ and $\Phi_{p^{a_1}}(x)^c \in \ker\varphi$, where $d$ is the order of $\nt$ (which is coprime to $p$) and $c\ge 1$.

If $c=1$, the claim follows. If $c\ge 2$, then we can employ that fact that $x^{p^{a_1}}-1\in \ker \varphi$
and the equality 
\begin{align*}
	x^{p^{a_1}}-1=(x^{p^{a_1-1}}-1)\Phi_{p^{a_1}}(x)
\end{align*}
to derive that 
\begin{equation*}
	\Phi_{p^{a_1}}(x)(x^{p^{a_1-1}}-1,\Phi_{p^{a_1}}^{c-1}(x),d)\subseteq \ker\varphi.
\end{equation*}
But now $\Phi_{p^{a_1}}(x)^{c-1}=\Phi_{p}(x^{p^{a_1-1}})^{c-1}$, so
\begin{align*}
	(x^{p^{a_1-1}}-1,\Phi_{p^{a_1}}(x)^{c-1},d)&=(x^{p^{a_1-1}}-1,\Phi_{p}(x^{p^{a_1-1}})^{c-1},d)\\&=(x^{p^{a_1-1}}-1,\Phi_{p}(1)^{c-1},d)=(x^{p^{a_1-1}}-1,p^{c-1},d)=(1),
\end{align*}
as $p$ and $d$ are coprime; this means that $\Phi_{p^{a_1}}(x)\in \ker\varphi$, as claimed. 
\end{proof}

\subsection{Realising minimum values}
To show that the lower bounds we found in the various cases are actually the minimum values assumed by the rank, we have to construct examples of TN rings whose groups of units realise those values. The idea for the construction is to use   the exact sequence \eqref{eq:es-nt} \say{backwards}, finding explicitly a TN
rings such that $A^*$ fits into the sequence, after the other terms are \say{prescribed}. For the reasons we have already explained, these groups cannot be chosen as freely as possible; these limitations are matched in the statement of the next result.

 \begin{prop}\label{proposition: construction}
	Let $G$ be a finitely generated abelian group with $\varepsilon(G)=\varepsilon\ge 1$, and let $k\ge 1$.  Suppose that $G$ is realisable as $B^{\ast}=G$, where $B$ satisfies the following properties:
\begin{itemize}
	\item $B$ is torsion-free and reduced. 
	\item $B$ is a $k$-ring and it is finitely generated over $\Z[\zeta_k]$ by some elements $\gamma_1,\ldots,\gamma_m$.  
	\item The ideal of $B$ generated by $\gamma_1,\ldots,\gamma_m$ intersects trivially $\Z[\zeta_{k}]$.
\end{itemize}
If $H$ is a finite abelian group, whose order is odd and coprime to $k$, such that $H_q$ is a $\lambda(q,k)$-power for all prime divisors $q$ of $|H|$, then the group 
\begin{equation*}
	H\times G
\end{equation*}
is realisable in the class of TN rings. 
\end{prop}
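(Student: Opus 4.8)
The plan is to realise $H\times G$ \say{backwards} through the exact sequence of Proposition~\ref{prop:successioneesatta}: I shall build a TN ring $A$ whose torsion nilradical $\nt$ has adjoint group $1+\nt\cong H$ and whose quotient $A/\nt$ is (a copy of) $B$, so that $(A/\nt)^*\cong B^*=G$. Concretely, $A$ will be a fibre product of $B$ with a finite product of local rings, glued along their common residue fields.

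For the local pieces, note that $|H|$ odd forces every prime divisor $q$ of $|H|$ to be odd, and by hypothesis $H_q=(V_q)^{\lambda(q,k)}$ for an abelian $q$-group $V_q$. Applying Proposition~\ref{proposition: odd prime realisability} with $p=q$, $\lambda=\lambda(q,k)$ and $P=V_q$ produces a finite local commutative ring $R_q$ of $(q,\lambda(q,k))$-type with
\[
  R_q^*\cong \F_{q^{\lambda(q,k)}}^*\times H_q ,
\]
whose maximal ideal $\mathfrak m_q$ is its nilradical and satisfies $1+\mathfrak m_q\cong H_q$; moreover this decomposition identifies the reduction $R_q^*\to(R_q/\mathfrak m_q)^*=\F_{q^{\lambda(q,k)}}^*$ with the projection onto the first factor. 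Since $q\nmid k$ and $\lambda(q,k)$ is the order of $q$ modulo $k$, the field $\F_{q^{\lambda(q,k)}}$ contains a primitive $k$th root of unity; lifting it to $R_q^*$ makes $R_q$ a $\Z[\zeta_k]$-algebra whose residue map extends the reduction $\Z[\zeta_k]\to\Z[\zeta_k]/P_q=\F_{q^{\lambda(q,k)}}$ of Proposition~\ref{proposition: cyclotomic quotients}, for a suitable prime $P_q$ over $q$.

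Next I reduce $B$ compatibly. Writing $J=(\gamma_1,\dots,\gamma_m)$ for the ideal of $B$ generated by the given generators, the quotient $B/J$ is generated over the image of $\Z[\zeta_k]$ by the vanishing classes of the $\gamma_i$, hence $B/J\cong\Z[\zeta_k]/(J\cap\Z[\zeta_k])=\Z[\zeta_k]$, where the last equality is exactly the trivial-intersection hypothesis. Composing with the reductions modulo the $P_q$ gives a ring map $f\colon B\to\prod_q\F_{q^{\lambda(q,k)}}$ agreeing on $\Z[\zeta_k]$ with the product of residue maps. Letting $g\colon\prod_q R_q\to\prod_q\F_{q^{\lambda(q,k)}}$ be the product of the surjective residue maps, I set
\[
  A=\{(b,r)\in B\times\textstyle\prod_q R_q : f(b)=g(r)\}.
\]

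Finally I read off $A$. It is commutative of characteristic $0$; an element $(b,r)$ is torsion iff $b=0$, and then the relation $g(r)=f(0)=0$ forces $r\in\prod_q\mathfrak m_q$, so (using that $B$ is reduced) the torsion ideal equals the nilradical $\nt=\{0\}\times\prod_q\mathfrak m_q$ and $A$ is a TN ring, with $A/\nt\cong B$ by surjectivity of $g$. Consequently $1+\nt\cong\prod_q(1+\mathfrak m_q)\cong H$. Units of a fibre product are the compatible pairs of units, so
\[
  A^*=\{(b,r): b\in B^*,\ r\in\textstyle\prod_q R_q^*,\ f(b)=g(r)\};
\]
as $g$ restricts to the projection $\prod_q R_q^*\cong\prod_q(\F_{q^{\lambda(q,k)}}^*\times H_q)\to\prod_q\F_{q^{\lambda(q,k)}}^*$, once $b\in B^*$ is fixed the $H_q$-coordinates range freely, yielding $A^*\cong B^*\times\prod_q H_q\cong G\times H$. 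I expect the only delicate point to be the bookkeeping of the second and third steps---ensuring that $f$ and $g$ take values in literally the same residue fields and agree on the common subring $\Z[\zeta_k]$---which is precisely what the trivial-intersection hypothesis and the Teichm\"uller lifts secure; granting this compatibility, the identification of the nilradical and of the unit group is routine.
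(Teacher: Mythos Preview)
Your fibre-product argument is correct, and it is a genuinely different construction from the paper's. The paper builds $A$ as an explicit quotient
\[
A=\frac{B[x_1,\ldots,x_t]}{\displaystyle\sum_{j} \mathcal Q_j^{b_j}x_j+(\gamma_ix_j,\,x_jx_{j'})_{i,j,j'}},
\]
one variable $x_j$ for each indecomposable cyclic factor $(\Z/q_j^{b_j}\Z)^{\lambda_j}$ of $H$, with $Q_j\subset\Z[\zeta_k]$ a prime over $q_j$. The relations force $\overline{x_j}\,\overline{x_{j'}}=0$ (so $\nt\cong 1+\nt$ via $y\mapsto 1+y$) and $\gamma_i\overline{x_j}=0$; the trivial-intersection hypothesis is used to pin down $\Ann(\overline{x_j})$, whence $(\overline{x_j})\cong\Z[\zeta_k]/Q_j^{b_j}\cong(\Z/q_j^{b_j}\Z)^{\lambda_j}$ and $1+\nt\cong H$.

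Your route instead imports Proposition~\ref{proposition: odd prime realisability} as a black box to manufacture the local pieces $R_q$, and then glues them to $B$ over the common residue fields. This is more categorical and arguably cleaner once the compatibility of $f$ and $g$ on $\Z[\zeta_k]$ is secured (which, as you note, is exactly where the trivial-intersection hypothesis enters, via $B/(\gamma_1,\ldots,\gamma_m)\cong\Z[\zeta_k]$). The paper's approach, by contrast, is self-contained---it does not invoke Proposition~\ref{proposition: odd prime realisability}---and yields a completely explicit presentation of $A$, which is convenient for the later Examples~\ref{ex:2revisited} and~\ref{example: strange behaviour} that are modelled on the same template. One small remark: your verification that the Teichm\"uller lift $\omega\in R_q^*$ actually kills $\Phi_k$ (not just $x^k-1$) is implicit; it holds because $\Phi_d(\omega)$ is a unit for every proper divisor $d\mid k$, since its reduction modulo $\mathfrak m_q$ is nonzero.
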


\begin{proof}
Consider the decomposition of $H$ into indecomposable factors
\begin{equation*}
	H\cong \prod_{j=1}^t (\Z/q_j^{b_j}\Z)^{\lambda_j},
\end{equation*}
where $\lambda_j=\lambda(q_j,k)$.

Let $Q_j$ to be a prime ideal of $\Z[\zeta_{k}]$ containing $q_j$. Consider the polynomial ring $B[x_1,\ldots,x_t]$, call $\mathcal{Q}_j$ the ideal of $B[x_1,\ldots,x_t]$ generated by $Q_j$, and set
\begin{equation*}
		J= \sum_{j=1}^t \mathcal{Q}_j^{b_j}x_j+(\gamma_ix_j,x_jx_{j'})_{i=1,\ldots, m,\, j,j'=1,\ldots t}.
	\end{equation*} 
By definition $J$ is an ideal of $B$.
Let 
	\begin{equation*}
		A=\frac{B[x_1,\ldots,x_t]}{J}.
	\end{equation*}
As $J\cap B=\{0\}$, $A$ contains $B$.

By construction, the elements $\overline{x_j}$ (here the bar denotes that class modulo $J$) are nilpotent and torsion, and the isomorphisms
\begin{equation*}
	\frac{A}{(\overline{x_1},\ldots,\overline{x_t})}\cong \frac{B[x_1,\ldots,x_t]/J}{(x_1,\ldots,x_t)/J}\cong \frac{B[x_1,\ldots,x_t]}{(x_1,\ldots,x_t)}\cong  B
\end{equation*}	
implies that 
\begin{equation*}
	\n=\nt=(\overline{x_1},\ldots,\overline{x_t})
\end{equation*}
and that $A$ is a TN ring. 
In addition: 
\begin{itemize}
	\item $\n\cong 1+\n$ via $\overline x\mapsto 1+\overline x$, as $\overline{x_j}^2=0$ for all $j$.
	\item As a group, $\n\cong \oplus_{j=1}^t (\overline{x_j})$, because $\overline{x_j}\overline{x_{j'}}=0$ for all $j,j'$. 
\end{itemize}

In order to compute the group structure of $(\overline{x_j})$, we need to describe the annihilator $\Ann(\overline{x_j})$ in $A$ of $\overline{x_j}$, for all $j$. To lighten the notation, we consider the case $j=1$, as the others are computed in the same way. 
Clearly, 
\begin{equation*}
	\mathcal{Q}_1^{b_1}+({\gamma_i},\overline{x_j})_{i,j}\subseteq \Ann(\overline{x_1}).
\end{equation*}
Conversely, suppose that $\alpha\in \Ann(\overline{x_1})$. By the description of $A$, we can think of $\alpha$ as a polynomial with coefficients in $\Z[\zeta_{k}]$ computed in the elements $\gamma_i$ and $\overline{x_j}$. This implies that
\begin{equation*}
	0=\alpha \overline{x_1}=c_0\overline{x_1},
\end{equation*}
where $c_0\in \Z[\zeta_{k}]$, that is, 
\begin{equation*}
	c_0x_1\in \sum_{j=1}^t \mathcal{Q}_j^{b_j}x_j+(\gamma_ix_j,x_jx_{j'})_{i,=1,\ldots, m,\, j,j'=1,\ldots t}.
\end{equation*}
As we are working in a polynomial ring,  we may  evaluate in $x_2=\cdots=x_t=0$ to obtain that 
\begin{equation*}
	c_0x_1\in \mathcal{Q}_1^{b_1}x_1+ (\gamma_ix_1,x_jx_{1})_{i,=1,\ldots, m,\, j,=1,\ldots t},
\end{equation*}
that is,
\begin{equation*}
	c_0\in \Z[\zeta_k]\cap \left( \mathcal{Q}_1^{b_1}+ (\gamma_i,x_j)_{i=1,\ldots, m,\, j=1,\ldots t}\right).
\end{equation*}
The assumption on the ideal generated by the $\gamma_i$ implies the right-hand side is exactly $Q_1^{b_1}$, and the claim follows. The same argument also shows that the composition
\begin{equation*}
	\Z[\zeta_k]\hookrightarrow A\to \frac{A}{\mathcal{Q}_1^{b_1}+ (\gamma_i,\overline{x_j})_{i=1,\ldots, m,\, j=1,\ldots t}}
\end{equation*}
is surjective with kernel $Q_1^{b_1}$, and taking in consideration Proposition~\ref{proposition: cyclotomic quotients} we conclude that,  for all $j$,
\begin{equation*}
	\frac{A}{\Ann(\overline{x_j})}\cong \frac{\Z[\zeta_{k}]}{Q_j^{b_j}}\cong (\Z/q_j^{b_j}\Z)^{\lambda_j}, 
\end{equation*}
and thus
\begin{equation*}
	1+\nt\cong \nt\cong \bigoplus_{j=1}^t (\overline{x_j})\cong \bigoplus_{j=1}^t \frac{A}{\Ann(\overline{x_j})}\cong \prod_{j=1}^t (\Z/q_j^{b_j}\Z)^{\lambda_j}\cong H
\end{equation*}
as groups. 

Finally,  since our assumptions guarantees that short exact sequence
\begin{equation*}
	1\to 1+\nt \to A^{\ast}\to B^{\ast}\to 1
\end{equation*}
splits, 
we get
\begin{equation*}
	A^{\ast}\cong  (1+\nt) \times C^{\ast}\cong H\times G. \qedhere
\end{equation*}

\end{proof}

\begin{remark}\label{remark: increase connect}
	Notice that if $B=\Z[\zeta_{k}][\gamma_1,\ldots, \gamma_m]$ is a ring that satisfies the hypothesis in the previous proposition, then the same holds for 
	\begin{align*}
		C&=B[u_1,\ldots, u_r,u_{1}^{-1},\ldots, u_{r}^{-1}]\\
		&=\Z[\zeta_{k}][\gamma_1,\ldots, \gamma_m,u_1-1,\ldots, u_r-1,u_{1}^{-1}-1,\ldots, u_{r}^{-1}-1].
	\end{align*}
	In the case in which $B$ is also connected, then, as stated in Proposition~\ref{prop: laurent}, 
	\begin{equation*}
		C^{*}\cong B^*\times \Z^r.
	\end{equation*}
	This means that if $H\times G$ is realisable as in the previous proposition and $B$ is connected, then 
	\begin{equation*}
		H\times G\times \Z^r
	\end{equation*}
	is also realisable in the class of TN rings.
\end{remark}

\begin{remark}
The rings $B$ constructed in Examples~\ref{example: torsion-free not 1} and~\ref{example: torsion-free 1} satisfy the hypothesis of the Proposition~\ref{proposition: construction} (and are also connected by Remark~\ref{remark: these are connected}), when we choose  the generators $\gamma_i=\beta_i-1$, where the $\beta_i$'s are the elements defined in those examples.
\end{remark}

We see now some examples. 

\begin{example}\label{ex:s0}
	Let $T\in\geps$, and assume $s_0\ne 1$. We apply the construction of Proposition~\ref{proposition: construction}, with $k=2^\varepsilon$ and 
\begin{equation*}
	G=\Z/2^{\varepsilon}\Z\times \prod_{i=1}^{s_0} T_{p_i}\times \Z^{g\left(\Z/2^{\varepsilon}\Z \times  \prod_{i=1}^{s_0} T_{p_i}\right)}.
\end{equation*}
	The idea is that we can refer to Example~\ref{example: torsion-free not 1} to construct a torsion-free and reduced ring $B$ with $B^{*}\cong G$. 
	
	Now we observe that the group 
\begin{equation*}
	H=\prod_{j=1}^{t_0} (V_{q_j})^{\lambda_{q_j}}
\end{equation*} 
satisfies the hypothesis of Proposition~\ref{proposition: construction}, and therefore we can construct explicitly a TN ring $A$ with 
\begin{equation*}
	A^*\cong H\times G\cong T\times \Z^{g\left(\Z/2^{\varepsilon}\Z \times  \prod_{i=1}^{s_0} T_{p_i}\right)}.
\end{equation*}
To conclude, as $B$ is connected, we can reason as in Remark~\ref{remark: increase connect} to find that we can also construct TN rings $A_l$ with
\begin{equation*}
	A_l^*\cong T\times \Z^{g\left(\Z/2^{\varepsilon}\Z \times  \prod_{i=1}^{s_0} T_{p_i}\right)}\times \Z^l
\end{equation*}
for all $l\ge 1$.

\end{example}

\begin{example}\label{ex:s0=1-1}
	Let $T\in\geps$ with $s_0=1$. In this case,
	\begin{equation*}
	T\cong \Z/2^{\varepsilon}\Z\times T_p \times \prod_{j=1}^{t_0} (V_{q_j})^{\lambda_{q_j}}
\end{equation*}
with $T_p\cong \prod_{i=1}^v \Z/p^{a_i}\Z$.
 Suppose that for all $j$, $T_{q_j}$ is a $\lambda(q_j,2^{\varepsilon}p^{a_1})$-power.  We apply the construction of Proposition~\ref{proposition: construction}, with $k=2^\varepsilon p^{a_1}$ and 
\begin{equation*}
	G=\Z/2^{\varepsilon}\Z\times T_p\times \Z^{g(\Z/2^{\varepsilon}\Z \times T_p)}.
\end{equation*}
	The idea is that we can refer to Example~\ref{example: torsion-free 1} to construct a torsion-free and reduced ring $B$ with $B^{*}\cong G$. We observe now that the group 
\begin{equation*}
	H=\prod_{j=1}^{t_0} (V_{q_j})^{\lambda_{q_j}}
\end{equation*} 
satisfies the hypothesis of Proposition~\ref{proposition: construction}, and therefore we can construct explicitly a TN ring $A$ with 
\begin{equation*}
	A^*\cong H\times G\cong  T\times \Z^{g(\Z/2^{\varepsilon}\Z \times T_p)}.
\end{equation*}
To conclude, as $B$ is connected, we can reason as in Remark~\ref{remark: increase connect} to find that we can also construct TN rings $A_l$ with
\begin{equation*}
	A_l^*\cong T\times\Z^{g(\Z/2^{\varepsilon}\Z \times T_p)}\times \Z^l
\end{equation*}
for all $l\ge 1$.

\end{example}

\begin{example}\label{ex:s0=1-2}
	Let $T\in\geps$ a group with $s_0=1$.  As before,
	\begin{equation*}
	T\cong \Z/2^{\varepsilon}\Z\times T_p \times \prod_{j=1}^{t_0} (V_{q_j})^{\lambda_{q_j}}
\end{equation*}
with $T_p\cong \prod_{i=1}^v \Z/p^{a_i}\Z$. But this time we do not assume that for all $j$, $T_{q_j}$ is a $\lambda(q_j,2^{\varepsilon}p^{a_1})$-power. We apply the construction of Proposition~\ref{proposition: construction}, with $k=2^\varepsilon$ and 
\begin{equation*}
	G=\Z/2^{\varepsilon}\Z \times T_p \times \Z^{g\left(\Z/2^{\varepsilon}\Z \times  T_p\right)+g(\Z/2^{\varepsilon}\Z)}
\end{equation*}
	The idea is that we can refer to Example~\ref{example: torsion-free not 1} to construct a torsion-free and reduced ring $B$ with $B^{*}\cong G$.
We observe now that the group 
\begin{equation*}
	H=\prod_{j=1}^{t_0} (V_{q_j})^{\lambda_{q_j}}
\end{equation*} 
satisfies the hypothesis of Proposition~\ref{proposition: construction}, and therefore we can construct explicitly a TN ring $A$ with 
\begin{equation*}
	A^*\cong H\times G\cong T\times \Z^{g\left(\Z/2^{\varepsilon}\Z \times T_p\right)+g(\Z/2^{\varepsilon}\Z)}.
\end{equation*}
In addition, as $B$ is connected, we can reason as in Remark~\ref{remark: increase connect} to find that we can also construct TN rings $A_l$ with
\begin{equation*}
	A_l^*\cong T\times \Z^{g\left(\Z/2^{\varepsilon}\Z \times  T_p\right)+g(\Z/2^{\varepsilon}\Z)}\times \Z^l
\end{equation*}
for all $l\ge 1$.
\end{example}

\subsection{The main result} 

We can finally summarise the previous discussion into the following result. Recall that if  $T\in\geps$, then the Sylow subgroups of $T$ fulfil the following: $T_2\cong \Z/2^{\varepsilon}\Z$ ($\varepsilon\ge 1$), and  for the primes $p_1,\dots,p_{s_0}$ such that $T_{p_i}$ is not a $\lambda(p_i,2^{\varepsilon})$-power, then 
\begin{equation*}
	\Prank T_{p_i}<\lambda(p_i,2^{\varepsilon}). 
\end{equation*}
When $s_0=1$, we simply write $p_1=p$ and
\begin{equation*}
	T_p\cong \prod_{i=1}^{v}\Z/p^{a_i}\Z,
\end{equation*}
where $1\le a_1\le\cdots\le a_v$.

\begin{theorem}\label{theorem: main realisable}
	Let $T\in\geps$.
	The group $T\times \Z^r$ is realisable in the class of TN rings if and only if 
	\begin{equation*}
		r\ge r(T),
	\end{equation*}
	where
		\begin{equation*}
		r(T)=\begin{cases}
			 g(\Z/2^{\varepsilon}\Z\times \prod_{i=1}^{s_0}T_{p_i} )& \text{\emph{in case }} C1,\\
			 \\
  g\left( \Z/2^{\varepsilon}\Z\times T_p \right) + g(\Z/2^{\varepsilon}\Z)& \text{\emph{in case }} C2.
		\end{cases}
	\end{equation*}
The cases $C1$ and $C2$ are described as follows:

	\begin{itemize}
		\item $C1:$ $s_0\ne 1$, or  $s_0=1$ and for all odd prime divisors $q\ne p$ of $|T|$, $T_q$ is a $\lambda(q,2^{\varepsilon}p^{a_1})$-power
		\item  $C2$: $s_0=1$ and there exists an odd prime divisor $q\ne p$ of $|T|$ such that $T_q$ is not a $\lambda(q,2^{\varepsilon}p^{a_1})$-power.
\end{itemize}
\end{theorem}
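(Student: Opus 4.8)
The plan is to assemble the theorem from the lower bounds of Propositions~\ref{prop:lb1} and~\ref{prop: s0=1 second} together with the explicit constructions of Examples~\ref{ex:s0},~\ref{ex:s0=1-1}, and~\ref{ex:s0=1-2}; the split into cases $C1$ and $C2$ is designed precisely so that each is settled by one lower bound and one matching construction. Writing $\rho$ for the (case-dependent) formula on the right-hand side, I would show that every TN ring realising $T$ has rank at least $\rho$, and that $T\times\Z^r$ is realisable for every $r\ge\rho$; these two facts together identify $r(T)$ with $\rho$ and give the stated equivalence. I would also note at the outset that when $s_0=1$ the product $\prod_{i=1}^{s_0}T_{p_i}$ reduces to $T_p$, so that the two displayed formulas are literally the quantities appearing in the cited propositions and examples.

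For the lower bound I would distinguish the two cases. In case $C1$, Proposition~\ref{prop:lb1} gives directly that any realising rank satisfies $r\ge g(\Z/2^{\varepsilon}\Z\times\prod_{i=1}^{s_0}T_{p_i})$. In case $C2$ we have $s_0=1$ and an odd prime $q\ne p$ for which $T_q$ fails to be a $\lambda(q,2^{\varepsilon}p^{a_1})$-power, which is exactly the hypothesis of Proposition~\ref{prop: s0=1 second}; that proposition then yields the sharper bound $r\ge g(\Z/2^{\varepsilon}\Z\times T_p)+g(\Z/2^{\varepsilon}\Z)$.

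For realisability I would invoke the three examples, each of which produces, for every $l\ge 0$, a TN ring with group of units $T\times\Z^{\rho+l}$, the passage from $l=0$ to arbitrary $l$ being furnished by the Laurent-polynomial construction of Remark~\ref{remark: increase connect}. Concretely, case $C1$ with $s_0\ne1$ is handled by Example~\ref{ex:s0}; case $C1$ with $s_0=1$ by Example~\ref{ex:s0=1-1}, whose running assumption that each $T_{q_j}$ is a $\lambda(q_j,2^{\varepsilon}p^{a_1})$-power is exactly the defining condition of $C1$ in this subcase; and case $C2$ by Example~\ref{ex:s0=1-2}. Hence $T\times\Z^r$ is realisable whenever $r\ge\rho$, which combined with the lower bound completes the argument.

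Because the substance has already been established in the preceding subsections, the only genuine obstacle is bookkeeping: verifying that the hypotheses of each example coincide with the defining conditions of the corresponding case, and that the lower bound and the realised rank agree, so that $\rho$ really is the minimum. The one consistency point worth flagging is the behaviour for small $\varepsilon$: since $g(\Z/2^{\varepsilon}\Z)=0$ for $\varepsilon\in\{1,2\}$, the two formulas for $r(T)$ then coincide and the $C1$/$C2$ distinction becomes vacuous, in agreement with the opening remark in the proof of Proposition~\ref{prop: s0=1 second}.
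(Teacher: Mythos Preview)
Your proposal is correct and follows essentially the same approach as the paper: the lower bounds come from Propositions~\ref{prop:lb1} and~\ref{prop: s0=1 second}, the minimum values are realised via Examples~\ref{ex:s0},~\ref{ex:s0=1-1}, and~\ref{ex:s0=1-2}, and the extension to all $r\ge r(T)$ uses Remark~\ref{remark: increase connect}. Your additional remarks on the bookkeeping and on the degeneracy of the $C1$/$C2$ distinction when $\varepsilon\le 2$ are accurate and helpful, though not strictly needed for the proof.
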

\begin{proof}
Propositions~\ref{prop:lb1} and \ref{prop: s0=1 second} show the lower bounds for case $C1$ and $C2$, respectively. 

On the other hand, Examples~\ref{ex:s0} and \ref{ex:s0=1-1} show that the value $r(T)$ is reached in case $C1$, and in
 Example~\ref{ex:s0=1-2} is presented a group of case $C2$ realising the value $r(T)$ for that case.
 Finally,  the rings of those examples have a connected torsion free quotient modulo $\nt$, so by Remark~\ref{remark: increase connect} all groups $T\times\Z^r$ with $r\ge r(T)$ are realisable.
  \end{proof}

We detail now an application of the main result in the case of finitely generated  abelian group with cyclic torsion part.
 Let $n=2^{\varepsilon}ab$, where $\varepsilon\ge 1$, $a=\prod_{i=1}^{s_0} p_i^{a_i}$, and $b=\prod_{j=1}^{t_0} q_j^{b_j}$. Here the $p_i$ are distinct primes such that $p_i\not\equiv 1\pmod{2^{\varepsilon}}$, and the $q_j$ are distinct primes such that $q_j\equiv 1\pmod{2^{\varepsilon}}$.  

\begin{theorem}
\label{teo:zn}
	The group 
	\begin{equation*}
		\Z/n\Z \times \Z^r
	\end{equation*}
	is realisable as group of units of a TN ring if and only if $r\ge r(\Z/n\Z)$, where
	\begin{equation*}
		r(\Z/n\Z)=\begin{cases}
			g(\Z/2^{\varepsilon}a\Z)& \text{\emph{in case }} C1,\\
			 \\
			g(\Z/2^{\varepsilon}a\Z)+g(\Z/2^{\varepsilon}\Z) &\text{\emph{in case }} C2.
		\end{cases}
	\end{equation*} 
The cases $C1$ and $C2$ are described as follows:
\begin{itemize}
\item C1: $s_0\ne 1$, or $s_0=1$ and $q_j\equiv 1\pmod {p_1^{a_1}}$ for all $j$,
\item C2: $s_0=1$ and $q_j\not\equiv 1\pmod {p_1^{a_1}}$ for some $j$.
\end{itemize}
\end{theorem}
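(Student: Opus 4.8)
The plan is to obtain this statement as a direct specialisation of Theorem~\ref{theorem: main realisable}, the only work being to verify that $\Z/n\Z$ belongs to the class $\geps$ and then to translate all the relevant data through the Chinese Remainder Theorem.

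First I would check that $T=\Z/n\Z$ lies in $\geps$. Since $T$ is cyclic, its Sylow $2$-subgroup is $\Z/2^{\varepsilon}\Z$ (so $\varepsilon(T)=\varepsilon$), and every Sylow subgroup $T_{\ell}$ is cyclic, hence $\Prank T_{\ell}=1$. An odd prime $\ell$ has the property that $T_{\ell}$ is not a $\lambda(\ell,2^{\varepsilon})$-power precisely when $\lambda(\ell,2^{\varepsilon})>1$, that is, when $\ell\not\equiv 1\pmod{2^{\varepsilon}}$; for such $\ell$ the inequality $\Prank T_{\ell}=1<\lambda(\ell,2^{\varepsilon})$ required in the definition of $\geps$ holds automatically. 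Consequently the primes $p_i$ (those with $p_i\not\equiv 1\pmod{2^{\varepsilon}}$) are exactly the odd primes for which $T_{p_i}$ is not a $\lambda(p_i,2^{\varepsilon})$-power, so they match the $p_i$ and the value $s_0$ in the notation for $\geps$, whereas each $q_j$ (those with $q_j\equiv 1\pmod{2^{\varepsilon}}$) satisfies $\lambda(q_j,2^{\varepsilon})=1$, making $T_{q_j}$ trivially a $\lambda(q_j,2^{\varepsilon})$-power.

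Next I would translate the dichotomy between cases $C1$ and $C2$ of Theorem~\ref{theorem: main realisable}. For a cyclic group, $T_{q}=\Z/q^{b}\Z$ is a $\lambda(q,2^{\varepsilon}p_1^{a_1})$-power if and only if $\lambda(q,2^{\varepsilon}p_1^{a_1})=1$, i.e.\ $q\equiv 1\pmod{2^{\varepsilon}p_1^{a_1}}$. Since every $q_j$ already satisfies $q_j\equiv 1\pmod{2^{\varepsilon}}$ and $\gcd(2^{\varepsilon},p_1^{a_1})=1$, the Chinese Remainder Theorem reduces this condition to $q_j\equiv 1\pmod{p_1^{a_1}}$. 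This is precisely the split into $C1$ and $C2$ recorded in the present statement.

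Finally I would simplify the constant $r(T)$ supplied by Theorem~\ref{theorem: main realisable}. The Chinese Remainder Theorem gives $\Z/2^{\varepsilon}\Z\times\prod_{i=1}^{s_0}T_{p_i}\cong\Z/2^{\varepsilon}a\Z$, so $r(\Z/n\Z)=g(\Z/2^{\varepsilon}a\Z)$ in case $C1$; and when $s_0=1$ one has $\Z/2^{\varepsilon}\Z\times T_p=\Z/2^{\varepsilon}p_1^{a_1}\Z=\Z/2^{\varepsilon}a\Z$, yielding $r(\Z/n\Z)=g(\Z/2^{\varepsilon}a\Z)+g(\Z/2^{\varepsilon}\Z)$ in case $C2$. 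The only point requiring care is the passage through the Chinese Remainder Theorem in the $\lambda$-power conditions, but this is elementary; as the entire statement is a specialisation of the already established Theorem~\ref{theorem: main realisable}, no genuine obstacle arises.
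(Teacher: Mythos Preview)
Your proposal is correct and follows exactly the same approach as the paper: both derive the result as a direct specialisation of Theorem~\ref{theorem: main realisable}, the key observation being that for a cyclic $T$ each Sylow subgroup has Pr\"ufer rank~$1$, so $T_\ell$ is a $\lambda_\ell$-power if and only if $\lambda_\ell=1$, i.e.\ $\ell\equiv 1\pmod{2^{\varepsilon}}$. Your write-up is in fact more detailed than the paper's very terse proof, spelling out explicitly the Chinese Remainder Theorem translation of the $C1$/$C2$ dichotomy and the simplification of the constants, all of which the paper leaves implicit.
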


\begin{proof}
Let $\ell$ be an odd prime dividing $n$. As $T=\Z/n\Z$ is cyclic, the only way that $T_{\ell}\cong\Z/\ell\Z$ is a $\lambda_{\ell}$-power is when $\lambda_{\ell}=1$, that is, when $\ell\equiv 1\pmod{2^{\varepsilon}}$. The result then follows from Theorem~\ref{theorem: main realisable}.
\end{proof}

As a particular case, we get the cyclic groups one can realise in the class of TN rings.
\begin{corollary}
	The finite cyclic group realisable in the class of TN rings are exactly those of order $2h$, where $h$ is odd, or $4h'$, where $h'$ is odd and divisible only by primes $q$ with $q\equiv 1\pmod 4$. 
\end{corollary}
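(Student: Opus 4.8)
The plan is to read the corollary directly off Theorem~\ref{teo:zn} by specialising to the rank-zero case. A finite cyclic group $\Z/n\Z$ is realisable as the \emph{full} group of units of a TN ring precisely when $\Z/n\Z\times\Z^{0}$ is realisable, which by Theorem~\ref{teo:zn} happens if and only if $r(\Z/n\Z)=0$. Moreover, since a TN ring has characteristic zero and hence contains the unit $-1$ of order $2$, the torsion part must have even order, so we may assume $n$ is even; then $\varepsilon\ge 1$ and the hypotheses of Theorem~\ref{teo:zn} apply (for $n$ odd there is nothing to realise). Thus the whole problem reduces to deciding, in terms of the factorisation $n=2^{\varepsilon}ab$, exactly when $r(\Z/n\Z)=0$.

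First I would record that every contribution to $g$ is nonnegative. Writing $\phi(2^{\varepsilon}p_i^{a_i})=2^{\varepsilon-1}p_i^{a_i-1}(p_i-1)$, each summand $\frac{\phi(2^{\varepsilon}p_i^{a_i})}{2}-1$ and the constant $c$ are $\ge 0$; hence for a cyclic group $g$ vanishes if and only if the constant and every summand vanish. This turns the question into an elementary arithmetic condition, after which the computation of $r(\Z/n\Z)$ in cases $C1$ and $C2$ can be carried out by inspection of the formula for $g$.

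The core of the argument is then a case analysis on $\varepsilon$. For $\varepsilon=1$ the defining condition $p_i\not\equiv 1\pmod{2^{\varepsilon}}$ is never met by an odd prime, so $a=1$ and $s_0=0$; one is in case $C1$ and $r(\Z/n\Z)=g(\Z/2\Z)=0$, using the convention attached to the $(\cdot)^{*}$ term, so every $n=2h$ with $h$ odd is realisable. For $\varepsilon=2$ one is in case $C1$ exactly when $a=1$, and then $c=\frac{\phi(4)}{2}-1=0$ gives $r(\Z/n\Z)=0$; conversely a single prime factor $p_i\equiv 3\pmod 4$ already forces $\frac{\phi(4p_i^{a_i})}{2}-1\ge 1>0$, so for $\varepsilon=2$ realisability is equivalent to $a=1$, i.e.\ to $n=4h'$ with $h'$ divisible only by primes $\equiv 1\pmod 4$. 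For $\varepsilon\ge 3$ the value of $g$ is always strictly positive: if $a=1$ then $c=2^{\varepsilon-2}-1\ge 1>0$, while if $a\ne 1$ then $\phi(2^{\varepsilon}p_i^{a_i})=2^{\varepsilon-1}\phi(p_i^{a_i})\ge 4$ gives a summand $\ge 1>0$; hence no cyclic group with $\varepsilon\ge 3$ is realisable.

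Finally I would dispose of case $C2$, which only arises when $\varepsilon\ge 2$ and $s_0=1$, so that $a=p^{a_1}\ne 1$; there $r(\Z/n\Z)\ge g(\Z/2^{\varepsilon}p^{a_1}\Z)>0$, contributing no new realisable groups. Collecting the three cases leaves exactly the orders $2h$ with $h$ odd and $4h'$ with $h'$ odd having all prime factors $\equiv 1\pmod 4$, as claimed. There is no deep obstacle here; the one point demanding care is the bookkeeping around the $(\cdot)^{*}$ convention at $\varepsilon=1$ together with the correct matching of cases $C1$/$C2$ with the congruence conditions that define $a$ and $b$.
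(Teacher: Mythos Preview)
Your proposal is correct and follows essentially the same route as the paper: both reduce to the condition $r(\Z/n\Z)=0$ via Theorem~\ref{teo:zn} and then analyse $\varepsilon$ case by case. The paper dispatches $\varepsilon\ge 3$ in one line by invoking Proposition~\ref{prop:k<2}, whereas you verify directly that $g>0$ in that range; either works. One small imprecision worth fixing: the sentence ``for $\varepsilon=2$ one is in case $C1$ exactly when $a=1$'' is not literally true, since $C1$ also includes the situation $s_0\ge 2$ (and even $s_0=1$ with the extra congruence on the $q_j$). This does not damage your argument, because you go on to observe that any prime $p_i\equiv 3\pmod 4$ contributes a strictly positive summand to $g(\Z/4a\Z)$ regardless of which case one is in, and you treat $C2$ separately at the end; but the phrasing should be adjusted.
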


\begin{proof}
 Let $n=2^{\varepsilon}ab$, where $\varepsilon\ge 1$, $a=\prod_{i=1}^{s_0} p_i^{a_i}$, and $b=\prod_{j=1}^{t_0} q_j^{b_j}$. Here the $p_i$ are distinct primes such that $p_i\not\equiv 1\pmod{2^{\varepsilon}}$, and the $q_j$ are distinct primes such that $q_j\equiv 1\pmod{2^{\varepsilon}}$. We need to show for which $n$
 \begin{equation*}
 	r(\Z/n\Z)=0.
 \end{equation*}
 First, we appeal to Proposition~\ref{prop:k<2} to derive that $\varepsilon=1,2$. In this case, $r(\Z/n\Z)=0$ is equivalent to $a=0$. 
 
 When $\varepsilon=1$, then $a=0$, as every odd prime $q$ clearly satisfies $q\equiv 1\pmod{2}$. We find in this way the first assertion. 
 
 If instead $\varepsilon=2$, then $a=0$ if and only if every odd prime divisor $q$ of $n$ satisfies $q\equiv 1\pmod{4}$, and the assertion follows once again. 
\end{proof}

Recall that by the results of \cite{PearsonSchneider70} (see also~\cite[Corollary 4.2]{dcdAMPA}), the finite cyclic groups realisable in the class of finite rings are those  whose order is the product of a set of pairwise coprime integers of the form 
\begin{itemize}
	\item $p^{\lambda}-1$, for $p$ prime and $\lambda\ge 1$;
	\item $(p-1)p^k$, for $p$ odd prime and $k\ge 1$. 
\end{itemize}

By Proposition~\ref{prop:ps}, combining this result with  Theorem~\ref{teo:zn}, we find the classification of the finitely generated abelian groups with cyclic torsion part realisable as group of units of a ring.

\begin{corollary}\label{corollary: all the cyclic}
Let $m\ge 1$ and $r\ge 0$. The group $\Z/m\Z\times\Z^r$ is realisable as group of units of a ring if and only if 
\begin{enumerate}
	\item $m$ is the product of a set of pairwise coprime integers of the form 
\begin{itemize}
	\item $p^{\lambda}-1$, for $p$ prime and $\lambda\ge 1$,
	\item $(p-1)p^k$, for $p$ odd prime and $k\ge 1$,
\end{itemize}
and $r\ge 0$
	\item  $m\equiv 0\pmod 2$  and 
	\begin{equation*}
		r\ge \min_{d\in X}r(\Z/(m/d)\Z),
	\end{equation*}
	where  $X$ is the set of the divisors $d$ of $m$ which are products of pairwise coprime integers of the form $2^{\lambda}-1$ and  $\gcd(d,m/d)=1$. 
\end{enumerate}
\end{corollary}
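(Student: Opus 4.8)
The plan is to combine the finite-ring classification of Pearson and Schneider (recalled just above) with the TN-ring classification of Theorem~\ref{teo:zn}, glued together by the splitting of Proposition~\ref{prop:ps}. First I would use Corollary~\ref{cor:sample-rings} to assume that $\Z/m\Z\times\Z^r$ is realised by a commutative ring $A$ that is finitely generated and integral over its fundamental subring, and then apply Proposition~\ref{prop:ps} to write $A\cong A_1\times A_2$ with $A_1$ finite and $A_2$ either the zero ring or a TN ring (by Remark~\ref{rem:PS}, a nonzero factor whose torsion ideal lies inside its nilradical has characteristic $0$). From $A^*\cong A_1^*\times A_2^*$ the torsion subgroup is $A_1^*\times(A_2^*)_{\tors}$; for this product of finite groups to be the cyclic group $\Z/m\Z$, both factors must be cyclic of coprime orders, say $d=|A_1^*|$ and $e=m/d$ with $\gcd(d,e)=1$, while $\rank A^*=\rank A_2^*=r$.

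For the necessity direction I would analyse the two possibilities for $A_2$. If $A_2=0$, then $e=1$, $d=m$ and $r=0$, and $A_1^*\cong\Z/m\Z$ is realisable by a finite ring, which by Pearson and Schneider is exactly the first clause of case~(1). If $A_2$ is a genuine TN ring, then $-1$ has order $2$ in $A_2^*$, so $e$ is even, hence $m$ is even and, since $\gcd(d,e)=1$, the integer $d$ is odd. Here the key arithmetic observation is that an \emph{odd} integer realisable by a finite ring must be a product of pairwise coprime integers of the form $2^{\lambda}-1$: among the Pearson--Schneider blocks, $(p-1)p^k$ with $p$ odd is even and $p^{\lambda}-1$ is even unless $p=2$, so only the blocks $2^{\lambda}-1$ survive in an odd coprime product. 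Together with $\gcd(d,m/d)=1$ this says precisely that $d\in X$, and applying Theorem~\ref{teo:zn} to $A_2^*\cong\Z/e\Z\times\Z^r$ (legitimate since $e$ is even) gives $r\ge r(\Z/(m/d)\Z)$; minimising over admissible $d$ yields the bound of case~(2).

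For sufficiency I would reverse the constructions. In case~(1), take a finite ring $A_1$ with $A_1^*\cong\Z/m\Z$ and form $A_1\times\F_2[x_1,x_1^{-1},\dots,x_r,x_r^{-1}]$; the Laurent ring over $\F_2$ has torsion-free unit group $\Z^r$, so the product realises $\Z/m\Z\times\Z^r$ for every $r\ge0$. In case~(2), choose $d\in X$ attaining the minimum, realise $\Z/d\Z$ by a finite ring and $\Z/(m/d)\Z\times\Z^r$ by a TN ring through Theorem~\ref{teo:zn}, and take the direct product, using $\gcd(d,m/d)=1$ to recombine the torsion into $\Z/m\Z$. The whole argument is essentially bookkeeping about how a cyclic torsion group distributes across the finite and TN factors, so I expect no single deep step; the point demanding the most care is the parity argument identifying the odd finite-realisable divisors with $X$, hand in hand with the observation that $m/d$ is automatically even (which is exactly what makes Theorem~\ref{teo:zn} applicable), together with the small bit of hygiene of treating the zero-ring factor so that the pure case~(1) with $r=0$ is neither lost nor double-counted.
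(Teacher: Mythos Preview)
Your proposal is correct and follows precisely the approach the paper indicates (combining Proposition~\ref{prop:ps} with the Pearson--Schneider classification and Theorem~\ref{teo:zn}); indeed the paper offers no detailed proof beyond that single sentence, so your write-up is a faithful unpacking of it. One minor point of hygiene: Corollary~\ref{cor:sample-rings} only furnishes a finitely generated \emph{integral} ring realising $\Z/m\Z\times\Z^{s}$ for some $s\le r$, not for $r$ itself, but since both conditions (1) and (2) are upward-closed in the rank this does not affect your argument.
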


\begin{example}
The group $\Z/328\Z$ is not realisable as group of units, as $328=8\cdot 41$ and the numbers are not in the list above. The results of~\cite{JLMS}, ensures that 
\begin{equation*}
	\Z/328\Z\times \Z^r
\end{equation*}
is realisable in the class of torsion-free rings if and only if $r\ge g(\Z/328\Z)=79$.

Now, $41\equiv 1\pmod 8$, so that, with the notation of Theorem~\ref{teo:zn}, we have,
\begin{equation*}
	n=328=8 \cdot 41=2^{\varepsilon}b
\end{equation*}
with $\varepsilon=3$ and $b=41$. We conclude that 
\begin{equation*}
	r(\Z/328\Z)=g(\Z/8\Z)=1,
\end{equation*}
proving that in the class of TN rings 
\begin{equation*}
	\Z/328\Z\times \Z^r
\end{equation*}
is realisable if and only if  $r\ge 1$. 
Finally, since the cyclic group $\Z/328\Z$ is not realisable, $1$ is the minimum rank also when considering all rings with identity.
\end{example}

The assumption for a group $T\in \mathcal{G}(\varepsilon)$ to have $\Prank T_p <\lambda_p$, for the \say{bad} primes $p$, is due to the fact that it forces all $T_p$ to be a subgroup of $B^*_{\tors}$, in case $A$ is a TN ring with $A^*\cong T$ and $B=A/\nt$. If we just assume that $T_p$ is $\lambda_p$-small, then the same conclusion cannot be derived, but we can still find some information in relation to the realisability in the class of TN rings of finite abelian group of a certain form. 

\begin{prop}
Let $T$ be a finite abelian group such that $T_2\cong \Z/4\Z$ and for all prime divisors $p$ of $|T|$ such that $p\equiv 3\pmod{4}$, $T_p$ is $2$-small. Then the following are equivalent:
\begin{itemize}
	\item $T$ is realisable in the class of TN rings.
	\item For all prime divisors $p$ of $|T|$ such that $p\equiv 3\pmod{4}$, $T_p$ is the square of a $1$-small abelian group.
\end{itemize}

\end{prop}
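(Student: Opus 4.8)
The plan is to derive both implications from the tools of Sections~\ref{section: tn tools} and~\ref{section: tn units} after the usual reduction. Suppose first that $T$ is realisable, say $A^{*}\cong T$ for a TN ring $A$. By Proposition~\ref{prop:ABC} and Remark~\ref{rem:PS} I may replace $A$ by $\Z[A^{*}_{\tors}]=\Z[T]$, which is again a TN ring, finitely generated and integral over $\Z$, with $A^{*}=T$ finite. Since $T_2\cong\Z/4\Z$ we have $\varepsilon(A^{*})=2$, so $A$ is a $4$-ring by Corollary~\ref{cor:epsilon-group}; hence $B=A/\nt$ is a torsion-free $4$-ring by Corollary~\ref{cor:B2allaepsilon}, and $\rank B^{*}=\rank A^{*}=0$ by Proposition~\ref{prop:rango}, so that $B^{*}$ is finite.

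The core of the forward implication is to show that $B^{*}_{\tors,p}$ is trivial for every prime $p\equiv 3\pmod 4$. As $B$ is torsion-free with finite group of units, Theorem~\ref{theorem: finite torsion-free} forces $B^{*}\cong(\Z/2\Z)^{a}\times(\Z/4\Z)^{b}\times(\Z/3\Z)^{c}$ with $a\ge 1$ whenever $c\ge 1$. Specialising the exact sequence~\eqref{eq:se-sylow} to $p=2$ exhibits $B^{*}_{\tors,2}$ as a quotient of the cyclic group $T_2\cong\Z/4\Z$, hence it is cyclic; since $B$ is a $4$-ring it contains an element of order $4$, so $B^{*}_{\tors,2}\cong\Z/4\Z$. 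This forces $a=0$ and therefore $c=0$, so $B^{*}$ has no $p$-torsion for any $p\equiv 3\pmod 4$. Now~\eqref{eq:se-sylow} gives $1+\ntp\cong T_p$, which is $2$-small by hypothesis; as $\lambda(p,2^{\varepsilon})=\lambda(p,4)=2$ for these primes, Proposition~\ref{proposition: tn small} yields $T_p\cong 1+\ntp\cong P^{2}$ for an abelian $p$-group $P$. Finally the $2$-smallness of $P^{2}$ reads $2\,\Prank P<2(p-1)$, i.e.\ $P$ is $1$-small, as desired.

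For the converse I would check that the hypothesis places $T$ in the class $\geps$ with $\varepsilon=2$ and $s_0=0$. Indeed $\lambda_p=1$ for $p\equiv 1\pmod 4$, so $T_p$ is automatically a $\lambda_p$-power, while for $p\equiv 3\pmod 4$ the assumption $T_p\cong P^{2}$ makes $T_p$ a $\lambda_p$-power since $\lambda_p=2$; thus no odd prime is \say{bad} and $s_0=0$. Theorem~\ref{theorem: main realisable} then places us in case $C1$ with $r(T)=g(\Z/2^{\varepsilon}\Z)=g(\Z/4\Z)=0$, so $T=T\times\Z^{0}$ is realisable in the class of TN rings; concretely this is witnessed by the construction of Proposition~\ref{proposition: construction} applied to the connected ring $B=\Z[i]$ (see Remark~\ref{remark: these are connected}) realising $\Z/4\Z$, with $H$ the odd part of $T$.

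I expect the forward direction to be the main obstacle, and within it the delicate point is the elimination of $3$-torsion in $B^{*}$: the vanishing of $B^{*}_{\tors,p}$ for $p\notin\{2,3\}$ is immediate from Theorem~\ref{theorem: finite torsion-free}, but the prime $3$ is itself $\equiv 3\pmod 4$ and survives that theorem, so ruling it out relies on combining the structural implication $c\ge 1\Rightarrow a\ge 1$ with the computation $B^{*}_{\tors,2}\cong\Z/4\Z$ that forces $a=0$. Once $B^{*}_{\tors,p}=1$ is known for all $p\equiv 3\pmod 4$, the remaining steps are routine applications of the results already established.
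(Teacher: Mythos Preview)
Your argument is correct. For the backward implication you and the paper agree: both invoke the construction of Proposition~\ref{proposition: construction} with $B=\Z[i]$ (equivalently, case $C1$ of Theorem~\ref{theorem: main realisable} with $s_0=0$ and $r(T)=g(\Z/4\Z)=0$).

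For the forward implication the two proofs take genuinely different routes, though both hinge on Proposition~\ref{proposition: tn small}. You argue directly: from $\rank B^*=0$ you invoke the classification of Theorem~\ref{theorem: finite torsion-free}, and your observation that $B^*_{\tors,2}\cong\Z/4\Z$ forces $a=0$, hence $c=0$, is the crux that eliminates all odd torsion in $B^*$ (including the delicate prime $3$); then $T_p=1+\ntp$ for each $p\equiv 3\pmod 4$ and Proposition~\ref{proposition: tn small} makes it a square of a $1$-small group. The paper instead argues the contrapositive: if some $T_p$ with $p\equiv 3\pmod 4$ fails to be a square, then Proposition~\ref{proposition: tn small} gives $1+\ntp\subsetneq T_p$, so $B^*_{\tors,p}$ is nontrivial; since $B^*_{\tors}$ then contains $\Z/4p\Z$, the torsion-free lower bound of Theorem~\ref{teo:thm5.1JLMS} and~\eqref{eq:g-crescente} yield $\rank B^*\ge g(\Z/4p\Z)>0$, contradicting $\rank A^*=0$. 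Your approach is cleaner for the statement at hand and avoids computing $g$, at the cost of relying on the finite classification; the paper's route stays within the rank machinery of Section~\ref{section: tn units} and in fact proves the marginally stronger statement that any TN ring with torsion units $T$ and such a non-square $T_p$ must have positive rank.
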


\begin{proof}
All the groups of this form are realisable by Proposition~\cite[Proposition 5.8]{dcdBLMS}, where the rings considered can be seen to be TN, or by an application of the construction of Proposition~\ref{proposition: construction}.

	Conversely, suppose that there exists $p\equiv 3\pmod{4}$ such that  $T_p$ is not the square of an abelian $p$-group. Let $A$ be a TN ring with
\begin{equation*}
	A^{*}\cong T\times \Z^r.
\end{equation*}
We check that $r>0$. Recall that for  $B=A/\nt$, we have $B^*_2\cong   \Z/4\Z$ (see Corollary~\ref{cor:B2allaepsilon}). Because of Proposition~\ref{prop:ABC}, we can assume $A$ to be of the form $A=\Z[(A^*)_{\tors}]$, so $A$ is finitely generated and integral over $\Z$. As, by assumption, $T_p$ is $\lambda_p=\lambda(p,4)$-small but not a $\lambda_p$-power, then $1+\ntp$ cannot be the whole group $T_p$ by Proposition ~\ref{proposition: tn small}; thus  the short exact sequence
\begin{equation*}
	1\to 1+\ntp \to A^{*}_{\tors,p}\to B^{*}_{\tors,p}\to 1
\end{equation*}
implies that $B^{*}_{\tors,p}$ is nontrivial. We obtain that
\begin{equation*}
	r=\rank B^*\ge g(B^{*}_{\tors})\ge g(\Z/2^{\varepsilon}p\Z)> 0.\qedhere
\end{equation*}
\end{proof}

 \section{Realisability of abelian $2$-groups}
\label{sec:2-groups}

In the previous section we have restricted our study to groups with a cyclic Sylow $2$-subgroup. 
The principal reason is that Theorem~\ref{theorem: main}, which is the tool we have employed to obtain control on the torsion part of the nilradical of TN rings, cannot be used for $p=2$. 

Nevertheless,  some results of the realisability of abelian 2-groups are present in the literature. First of all we recall that  if a finite abelian group $T$ is realisable in the class of TN rings, then $\varepsilon(T)\le2$; see Proposition~	\ref{prop:k<2}.
In the class of TN rings, all finite groups with $\varepsilon=1$ are realisable (see \cite[Theorem~5.1]{dcdBLMS}), whereas for $\varepsilon=2$ we know that
 all $2$-groups of the form type   $\Z/4\Z\times P$ are realisable, where $P$ is trivial or 
 \begin{equation*}
 	P\cong \Z/2^{e_1}\Z\times\dots\times\Z/2^{e_{2r}}\Z 
 \end{equation*} 
 with $2\le e_1\le\dots\le e_{2r}$ and $e_{2j}-e_{2j-1}\le1$ for all $j=1,\dots,r$  (see \cite[Theorem 5.8]{dcdBLMS}). Example~\ref{ex:2revisited} below shows that there are realisable groups outside this family.
 
In this section we study the realisability of  groups of the form $\Z/4\Z\times \Z/2^{u}\Z$ where $u\ge 0$, which is the simplest family of 2-groups for which the question of the realisability is open. In Theorems~\ref{teo:realisableTN} and \ref{teo:2not-rel}  we characterise the realisable ones, both in the class of TN rings and in that of general rings. These results allow us to exhibit  an infinite family of non-realisable 2-groups.
\smallskip

For $u\le 2$ the question of realisability is easily solved observing that $\Z^*\cong \Z/2\Z$ and $\Z[i]^*\cong \Z/4\Z$.

In~\cite[Example 2]{dcdBLMS}, it is claimed that also $\Z/4\Z\times \Z/8\Z$ is realisable in the class of TN rings, but, as shown in Example~\ref{ex:2revisited} below, that  example has a mistake. Nevertheless,  in Example~\ref{example: strange behaviour} we realise $\Z/4\Z\times \Z/8\Z$ with a variation of the previous example.

\begin{example}
\label{ex:2revisited}
		Consider 
	\begin{equation*}
		A=\Z[i][x,y]/(x^2-y-1,(1+i)y,y^3).
	\end{equation*}
	With an argument similar to that of the proof of Proposition~\ref{proposition: construction}, we can derive the following facts:
	\begin{itemize}
		\item $A$ is a TN ring. 
		\item $\Ann(\overline y)=((1+i),\overline y^2)$.
		\item $\n=\nt=(\overline y)\cong (\Z/2\Z)^4$.
	\end{itemize}
Note that while $\overline y\in A$ has additive order $2$, we have that $1+\overline y$ has multiplicative order $4$. The same holds for $1+\overline{xy}$, and moreover $(1+\overline y)^2=1+\overline y^2=(1+\overline{xy})^2$. In particular, $1+\overline y$ and $1+\overline{xy}$ generate a subgroup of $1+\n$ of order $8$, isomorphic to $\Z/2\Z\times \Z/4\Z$, not containing $1+\overline{xy^2}$. We conclude that 	\begin{equation*}
		1+\n\cong \Z/2\Z\times \Z/2\Z\times \Z/4\Z.
	\end{equation*}
Now take   $A/\n\cong \Z[i][x]/(x^2-1)$, and note that  the Chinese Remainder Theorem gives an embedding
	\begin{equation*}
	\Z[i][x]/(x^2-1)\hookrightarrow \Z[i][x]/(x-1)\times  \Z[i][x]/(x+1),
\end{equation*}
from which one can easily compute 
\begin{equation*}
		(A/\n)^{\ast}=\langle \overline{\overline x},i\rangle\cong \Z/2\Z\times \Z/4\Z.
	\end{equation*}
	(Here we are using the notation $\overline{\overline x}$ to denote the class of $\overline x$ modulo $\n$). 
	In particular, $A^{\ast}$ has order $2^7$. It surely contains $\langle \overline x,i\rangle$, which has order $2^5$. One can check that
	\begin{equation*}
		\langle\overline x,i,1+\overline{xy}\rangle
	\end{equation*}
	is a subgroup of $A^\ast$ isomorphic to $\Z/2\Z \times \Z/4\Z \times \Z/8\Z$ which does not contain $1+\overline{xy^2}$, hence
	\begin{equation*}
		A^\ast\cong \Z/2\Z\times \Z/2\Z \times \Z/4\Z \times \Z/8\Z.
	\end{equation*}
	Note that in this example, $\n$ and $1+\n$ are not isomorphic, and the short exact sequence \eqref{eq:success} does not split.

\end{example}

\begin{example}\label{example: strange behaviour}Let
	\begin{equation*}
	A=\frac{\Z[i][x,y]}{(y^2,(1+i)y, x^v-y-1,y(x-1))},
\end{equation*} 
where $v\in \{2,4\}$. 
With an argument similar to that of the proof of Proposition~\ref{proposition: construction}, we can derive the following facts:
\begin{itemize}
		\item $A$ is a TN ring. 
		\item $\Ann(\overline y)=((1+i),\overline y, \overline{x}-1)$.
		\item $\n=\nt=(\overline y)\cong \Z/2\Z\cong 1+\n_{\tors}$.
	\end{itemize}
As 
\begin{equation*}
	A/\n\cong \Z[i][x]/(x^v-1), 
\end{equation*}
we  find  $(A/\n)^*\cong\langle  i\rangle \times \langle \overline{\overline x}\rangle \cong \Z/4\Z\times \Z/v\Z$. We get $|A^*|=8v$ and, since $\overline{x}^v=\overline y+1\in \langle \overline x\rangle$, we conclude that
\begin{equation*}
	A^*\cong \Z/4\Z\times \Z/2v\Z. 
\end{equation*}

So,  for $v=4$ we have  realised the group $\Z/4\Z\times\Z/8\Z$. For $v=2$, we obtain again $\Z/4\Z\times \Z/4\Z$, but this examples shows that  $\varepsilon(A^*)$ may be different than $\varepsilon(B^*)$. Note that also this example shows that the short exact sequence \eqref{eq:success} does not necessarily split.
\end{example}

Summarising, $\Z/4\Z\times \Z/2^u\Z$ is realisable in the class of TN rings for $u=0,1,2,3$. 
The following proposition shows that  these are the only cases.
\begin{prop}\label{prop:realisableTN}
The  group  $\Z/4\Z\times \Z/2^{u}\Z$ is realisable in the class of TN rings if and only if $u\le 3$. 
\end{prop}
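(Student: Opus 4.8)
The plan is to prove the two directions separately. The realisability for $u\le 3$ has already been established by the explicit constructions: $\Z^*\cong\Z/2\Z$, $\Z[i]^*\cong\Z/4\Z$ give the cases $u=0,1$, while Example~\ref{example: strange behaviour} with $v=2$ realises $\Z/4\Z\times\Z/4\Z$ (case $u=2$) and with $v=4$ realises $\Z/4\Z\times\Z/8\Z$ (case $u=3$). So the content of the proposition is the \emph{converse}: no TN ring has group of units isomorphic to $\Z/4\Z\times\Z/2^u\Z$ for $u\ge 4$.

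For the converse, suppose $A$ is a TN ring with $A^*\cong\Z/4\Z\times\Z/2^u\Z$ and $u\ge 2$ (so that $\varepsilon(A^*)=2$); I want to force $u\le 3$. First I would reduce to the integral, finitely generated case by replacing $A$ with $\Z[(A^*)_{\tors}]=\Z[A^*]$ via Proposition~\ref{prop:ABC}, so that Proposition~\ref{prop:rango} and the machinery of Section~\ref{section: tn tools} apply; note here $A^*$ is already torsion, so $\rank A^*=0$ and hence, by Proposition~\ref{prop:rango}, $\rank(A/\nt)^*=0$, i.e.\ $B=A/\nt$ has \emph{finite} group of units. By Corollary~\ref{cor:B2allaepsilon} the ring $B$ is a $2^2=4$-ring, so $\Z[i]\hookrightarrow B$. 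The plan is then to analyse the split exact sequence (sequence~\eqref{eq:se-tors})
\begin{equation*}
1\to 1+\nt\to A^*\to B^*\to 1,
\end{equation*}
controlling both ends. Since $A^*$ is a $2$-group with $\varepsilon=2$, the group $B^*$ is a finite abelian $2$-group which, being the unit group of a torsion-free ring, is constrained by Theorem~\ref{theorem: finite torsion-free}: the realisable finite abelian groups in the class of torsion-free rings are exactly $(\Z/2\Z)^a\times(\Z/4\Z)^b\times(\Z/3\Z)^c$. As $B^*$ is a $2$-group, this forces
\begin{equation*}
B^*\cong(\Z/2\Z)^a\times(\Z/4\Z)^b,
\end{equation*}
and since $\varepsilon(A^*)=2$ pushes down to $\varepsilon(B^*)=2$ we must have $b\ge 1$.

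The remaining task is to bound the Prüfer rank of $A^*$, which is $2$ (it has two cyclic factors), and to show this forces $u\le 3$. The subgroup $1+\nt$ is the adjoint group of the radical ring $\nt$, which is a finite $2$-group; the obstruction, flagged repeatedly in the paper, is that Theorem~\ref{theorem: main} is useless for $p=2$, so I cannot directly conclude $\nt\cong 1+\nt$. Instead I would argue structurally. From $\Prank A^*=2$ and exactness of the sequence, both $\Prank(1+\nt)$ and $\Prank B^*$ are at most $2$; combined with $B^*\cong(\Z/2\Z)^a\times(\Z/4\Z)^b$ with $b\ge 1$ this limits the possibilities for $(a,b)$ to a short list (essentially $a+b\le 2$). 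One then has to understand $1+\nt$ as an abelian $2$-group of rank $\le 2$ whose structure interacts with the filtration of $B^*$. The cleanest route is to observe that $\nt$ is a module over $\Z[i]$ (via $B$'s $4$-ring structure lifting, or via Lemma~\ref{lemma: restriction of scalars tn} applied to the prime $2$), which forces each $2$-primary piece to have even Prüfer rank when it is a genuine $\Z[i]$-module summand; but $2=\lambda(2,4)$-type subtleties at $p=2$ mean $\Z[i]/(1+i)\cong\F_2$ has residue degree $1$, so this gives ranks in multiples of $1$, not $2$ — hence I would track the exponent of $1+\nt$ rather than merely its rank. The key numerical point is that $1+\nt$ has exponent at most $4$: any $1+x$ with $x\in\nt$ satisfies $(1+x)^4=1+\binom{4}{1}x+\binom{4}{2}x^2+\binom{4}{3}x^3+x^4$, and since $\nt$ is an $\Z[i]$-module killed by a power of $(1+i)$, the binomial coefficients $4,6,4$ together with nilpotency force $(1+x)^4=1$ once the nilpotency degree and the $2$-adic valuations line up.

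I expect the main obstacle to be exactly this last step: pinning down the exponent and rank of $1+\nt$ without the $p$-odd tool of Theorem~\ref{theorem: main}. The decisive inequality I would aim for is that a cyclic factor $\Z/2^u\Z$ of $A^*$ with $u\ge 3$ must already be present in $B^*$ — because the $1+\nt$ part contributes only elements of low multiplicative order (exponent dividing $4$, by the binomial computation above) — so the order-$2^u$ generator of $A^*$ projects to an element of order $\ge 2^{u-2}$ in $B^*$; but $B^*\cong(\Z/2\Z)^a\times(\Z/4\Z)^b$ has exponent $4$, forcing $2^{u-2}\le 4$, i.e.\ $u\le 4$, and a finer analysis of the extension (using that $\Prank A^*=2$ leaves no room for an extra factor to absorb the discrepancy when $u=4$) rules out $u=4$, yielding $u\le 3$. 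Making the exponent-$4$ bound on $1+\nt$ rigorous, and closing the borderline case $u=4$, is where the real work lies; everything else is bookkeeping with the exact sequence and Theorem~\ref{theorem: finite torsion-free}.
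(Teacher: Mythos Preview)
Your reduction is right up to the point where you pin down $B^*\cong(\Z/2\Z)^a\times(\Z/4\Z)^b$ with $b\ge1$, but two small slips creep in: the sequence $1\to 1+\nt\to A^*\to B^*\to 1$ need \emph{not} split (Example~\ref{example: strange behaviour} is a counterexample), and $\varepsilon(B^*)$ need not equal $\varepsilon(A^*)$ (same example, $v=2$). The conclusion $b\ge1$ is still correct, but for the right reason: $B$ is a $4$-ring, so $i\in B^*$ has order $4$.

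The real gap is the exponent argument. Your claim that $1+\nt$ has exponent dividing $4$ is not justified and is false without further input: take $A=\Z[i][x]/(x^2,8x)$, where $\nt=(x)\cong(\Z/8\Z)^2$ and $1+x$ has multiplicative order $8$. So the binomial computation alone cannot close the argument, and you are left with exactly the hard cases you flag. The paper proceeds quite differently. First it shows that $1+\n$ is \emph{cyclic}: the image $\alpha$ of $i$ has order $4$ in $A^*$ and still order $4$ in $B^*$, so $\langle\alpha\rangle\cap(1+\n)=1$; since $\Prank A^*=2$ this forces $1+\n$ to embed in $A^*/\langle\alpha\rangle\cong\Z/2^u\Z$. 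Then it invokes Byott's theorem (Theorem~\ref{teo:realisableTN}): a brace of order $2^v$ with $v\ge3$ and cyclic adjoint group has cyclic additive group. But $\n$ is a $\Z[i]$-module, and no quotient $\Z[i]/(1+i)^k$ with $k\ge2$ is cyclic as an abelian group, so $\n$ cyclic of order $\ge8$ is impossible (Corollary~\ref{corollary: not cyclic}). This kills all cases except $B^*\cong\Z/4\Z\times\Z/4\Z$ with $u=4$ and $|1+\n|=4$, which is then eliminated by an explicit computation in the radical (showing $y^3=0$ and deriving a contradiction from $x^4=1+y$ or $1+y+y^2$). You are missing both the cyclicity observation and the appeal to Byott; without them the exponent route does not go through.
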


In order to prove this result, we need consequence of \cite[Theorem 6.1]{Byo07}, reported below in the specialised case of braces.
\begin{theorem}[Byott]\label{teo:realisableTN}
	Let $(N,+,\circ)$ be a brace of order $2^v$, where $v\ge 3$. If $(N,\circ)$ is cyclic, then $(N,+)$ is cyclic. 
\end{theorem}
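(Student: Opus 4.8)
The plan is to translate the statement into a question about regular subgroups of a holomorph, where the rank of the additive group can be controlled by a reduction modulo $2$. Set $A=(N,+)$, an abelian group of order $2^v$, and recall the standard dictionary between braces and regular subgroups (see~\cite{FCC}): the brace $(N,+,\circ)$ produces an injective homomorphism
\begin{equation*}
	\rho\colon (N,\circ)\to \mathrm{Hol}(A)=A\rtimes \Aut(A),\qquad \rho(a)=(a,\lambda_a),
\end{equation*}
whose image is a regular subgroup of $\mathrm{Hol}(A)$ isomorphic to $(N,\circ)$, where $(a,\phi)$ denotes the affine map $x\mapsto \phi(x)+a$ and $\lambda_a(b)=a\circ b-a$. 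Thus the hypothesis that $(N,\circ)$ is cyclic of order $2^v$ says exactly that $\mathrm{Hol}(A)$ contains a regular cyclic subgroup $C=\langle\sigma\rangle$, with $\sigma=(a,\phi)$ of order $2^v=|A|$; since $C$ is regular and $|C|=|A|$, the element $\sigma$ acts on $A$ as a single $2^v$-cycle. The goal is to prove that $A$ is cyclic, that is, that its Pr\"ufer rank $k=\Prank A$ equals $1$.

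The clean step is a reduction modulo $2A$. Write $\bar A=A/2A$, an $\F_2$-vector space of dimension $k$, and let $\bar\sigma=(\bar a,\bar\phi)\in \mathrm{AGL}_k(\F_2)$ be the induced affine transformation. As the orbit map is compatible with the projection $A\to\bar A$, the group $\langle\bar\sigma\rangle$ acts transitively on $\bar A$, so $\mathrm{ord}(\bar\sigma)\ge 2^k$. On the other hand, $\bar\phi$ is a $2$-element of $\mathrm{GL}_k(\F_2)$, hence unipotent with $(\bar\phi-1)^k=0$; writing $t=\lceil\log_2 k\rceil$, the identity $(1+n)^{2^t}=1+n^{2^t}$ valid in characteristic $2$ gives $\bar\phi^{2^t}=1$, and then $\bar\sigma^{2^t}$ is a translation, so $\mathrm{ord}(\bar\sigma)\le 2^{t+1}$. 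Combining, $2^k\le 2^{\lceil\log_2 k\rceil+1}$, which forces $k\le 3$.

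It remains to exclude $k=2$ and $k=3$, and this is where the hypothesis $v\ge 3$ must genuinely be used (for $v=2$ a cyclic adjoint group with non-cyclic additive group does occur). The strategy I would follow is a descent: the kernel of the projection $C\to\langle\phi\rangle\le \Aut(A)$ is $C\cap A$, a cyclic group of translations generated by a single $d\in A$ of order $2^s$; passing to the quotient $A/\langle d\rangle$ one checks that $\sigma$ induces a regular cyclic affine transformation of order $2^{v-s}=|A/\langle d\rangle|$, so the problem reduces to a smaller abelian $2$-group. Carrying this induction through requires controlling the possible orders of $\phi$ in $\Aut(A)$ against the rank of $A$, together with a careful treatment of the two surviving ranks. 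This case analysis is exactly the content of~\cite[Theorem~6.1]{Byo07}, and I regard it as the main obstacle: the mod-$2$ bound alone cannot distinguish ranks $1$, $2$ and $3$, and the hypothesis $v\ge 3$ enters only through the finer $2$-adic structure of $A$ and of $\Aut(A)$.

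Finally, in the commutative radical-ring case relevant to the present paper, an additional constraint is available for free: filtering by the powers $N\supseteq N^2\supseteq\cdots$ of the ideal $N$, which is nilpotent by~\cite[Chapter IX, Proposition 2.13]{hungerford}, yields group isomorphisms $(1+N^i)/(1+N^{i+1})\cong N^i/N^{i+1}$, so cyclicity of $1+N$ forces every graded quotient $N^i/N^{i+1}$ to be cyclic. This is a useful supplement to the rank bound above when completing the small-rank cases.
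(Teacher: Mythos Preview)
The paper does not give a proof of this statement: it is quoted as a specialisation to braces of~\cite[Theorem~6.1]{Byo07}, with no argument supplied. So there is no ``paper's own proof'' to compare against beyond the citation.

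Your proposal is an outline rather than a proof, and you acknowledge as much. The mod-$2$ reduction is correct and pleasant: transitivity of $\langle\bar\sigma\rangle$ on $\bar A$ gives $\mathrm{ord}(\bar\sigma)\ge 2^k$, unipotency of $\bar\phi$ gives $\mathrm{ord}(\bar\sigma)\le 2^{\lceil\log_2 k\rceil+1}$, and the inequality $k\le\lceil\log_2 k\rceil+1$ forces $k\le 3$. But this leaves precisely the hard cases $k=2,3$, and at that point you invoke~\cite[Theorem~6.1]{Byo07}---which is the very result under discussion. That is circular: the proposal does not stand as an independent argument.

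The descent sketch via $A/\langle d\rangle$ is also underdeveloped. You correctly note that commutativity of $C$ forces $\phi(d)=d$, so $\langle d\rangle$ is $\phi$-invariant and the quotient makes sense. However, you have not shown that the induced $\bar\sigma$ acts \emph{regularly} on $A/\langle d\rangle$: transitivity descends, but the order of $\bar\sigma$ could in principle exceed $|A/\langle d\rangle|$ (nothing prevents some $\sigma^m$ with $b_m\in\langle d\rangle$ and $\phi^m$ acting nontrivially on $A/\langle d\rangle$), nor have you argued that the Pr\"ufer rank of the quotient drops, which is what an induction on $v$ would need. These gaps are exactly where the $2$-adic work in Byott's paper lives.

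The closing remark on commutative radical rings is a true and useful observation, but it is extraneous to the statement as posed, which concerns arbitrary braces.
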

\begin{corollary}\label{corollary: not cyclic}
	Let $A$ be a $\TN$ ring such that $\varepsilon(A^*)=2$. If $1+\n$ has order $2^v$, where $v\ge 3$, then $1+\n$ is not cyclic.
\end{corollary}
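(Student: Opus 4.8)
The plan is to reduce Corollary~\ref{corollary: not cyclic} directly to Byott's Theorem~\ref{teo:realisableTN}. The key observation is that $\n$ is a radical ring, hence $(\n,+,\circ)$ is a brace with adjoint group $(\n,\circ)\cong 1+\n$. So I would like to apply Byott's result with $N=\n$, but there is a mismatch: the corollary assumes $1+\n$ has order $2^v$, whereas $\n$ (and hence $1+\n$) may have order divisible by odd primes as well. First I would therefore pass to the Sylow $2$-subgroup. Since $\n=\nt$ splits as a direct sum of its $p$-primary components, and $\n_{\tors,2}$ is an ideal of $\n$ (as noted before the exact sequence~\eqref{eq:se-sylow}), the $2$-part $\n_{\tors,2}$ is itself a radical ring whose adjoint group is $(1+\n)_2=1+\n_{\tors,2}$. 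Thus $(\n_{\tors,2},+,\circ)$ is a brace of order $2^v$ whose adjoint group $(\n_{\tors,2},\circ)\cong 1+\n_{\tors,2}$ has order $2^v$.

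The argument then runs as follows. I would prove the contrapositive: assume $1+\n$ is cyclic of order $2^v$ with $v\ge 3$, and derive a contradiction with $\varepsilon(A^*)=2$. If $1+\n\cong 1+\n_{\tors,2}$ is cyclic of order $2^v$, then by Byott's Theorem applied to the brace $\n_{\tors,2}$, the additive group $(\n_{\tors,2},+)$ is also cyclic of order $2^v$. Now I would invoke the short exact sequence~\eqref{eq:se-sylow} with $p=2$,
\begin{equation*}
1\to 1+\n_{\tors,2}\hookrightarrow A^{*}_{\tors,2}\to (A/\nt)^{*}_{\tors,2}\to 1,
\end{equation*}
together with the fact, established in Corollary~\ref{cor:B2allaepsilon} and the subsequent remark, that $B=A/\nt$ is a $2^{\varepsilon}$-ring with $\varepsilon=\varepsilon(A^*)=2$, so $(A/\nt)^{*}_{\tors,2}$ contains a copy of $\Z/4\Z$. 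The contradiction should come from comparing the $2$-ranks: a cyclic $1+\n_{\tors,2}$ injects into $A^{*}_{\tors,2}$, which also surjects onto a group containing $\Z/4\Z$, forcing $A^{*}_{\tors,2}$ to have a noncyclic structure incompatible with $\varepsilon(A^*)=2$ once $v\ge 3$.

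Thinking more carefully, the cleanest route is probably to argue that if $1+\n$ were cyclic with $v\ge 3$, then Byott gives $\n_{\tors,2}$ cyclic, whence $\n=\nt$ has cyclic $2$-part; but the realisability constraints on $A^*_{\tors,2}$ combined with $\varepsilon(A^*)=2$ (so that $-1$ is a square but not a fourth power in the relevant sense) should be shown to be violated. The main obstacle I anticipate is making precise exactly which contradiction is extracted: it is not immediate that a cyclic $1+\n$ of large order is incompatible with $\varepsilon(A^*)=2$ without carefully tracking how the element of order $4$ coming from $\Z[\zeta_4]=\Z[i]$ interacts with the filtration $1+\n$. I would expect the intended proof to be very short---essentially \emph{$\n$ is a brace, its $2$-adjoint group is cyclic, so by Byott its $2$-additive group is cyclic, and this is what the corollary asserts to be false}---so most likely the corollary is simply a direct restatement of Byott's theorem for the brace $\n_{\tors,2}$, and the real content (the interaction with $\varepsilon$) is deferred to the proof of Proposition~\ref{prop:realisableTN}. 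In that case the proof reduces to the single observation that $\n_{\tors,2}$ is a brace with adjoint group $1+\n_{\tors,2}=(1+\n)_2$, and invoking Theorem~\ref{teo:realisableTN}.
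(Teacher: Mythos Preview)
Your worry about odd primes is unnecessary: the hypothesis already says $|1+\n|=2^v$, and $|\n|=|1+\n|$, so $\n$ is itself a $2$-group and there is no need to pass to a Sylow subgroup.

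After that simplification, you correctly apply Byott's theorem to conclude that $(\n,+)$ is cyclic. But then there is a genuine gap: you do not locate the contradiction, and your guess that it comes from the exact sequence \eqref{eq:se-sylow}, or that the corollary is ``simply a direct restatement of Byott's theorem'' with the role of $\varepsilon$ deferred, is wrong. The hypothesis $\varepsilon(A^*)=2$ is used right here, and the contradiction is purely module-theoretic. By Corollary~\ref{cor:epsilon-group}, $A$ contains a copy of $\Z[i]$, so $\n$ is a $\Z[i]$-module. Any finite $\Z[i]$-module of $2$-power order decomposes as a direct sum of modules $\Z[i]/(1+i)^a$; if $(\n,+)$ is cyclic of order $2^v$ there can be only one summand, so $\n\cong\Z[i]/(1+i)^v$. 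But $\Z[i]/(1+i)^v$ is annihilated by $2^{\lceil v/2\rceil}$, hence is not a cyclic abelian group once $v\ge 2$. That is the contradiction.

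So the missing idea is precisely the use of the $\Z[i]$-module structure on $\n$ to rule out cyclicity of its additive group; the exact sequence and the structure of $(A/\nt)^*$ play no role in this corollary.
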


\begin{proof}
Assume, by contradiction, that $1+\n$ is cyclic, so that also $\n$ is cyclic by Theorem~\ref{teo:realisableTN}. By Corollary~\ref{cor:epsilon-group}, $A$ contains a copy of $\Z[i]$ as a subring, and therefore $\n$ is a $\Z[i]$-module. In particular, we deduce that $\n$ is isomorphic to a direct sum of quotients of $\Z[i]$; as $\n$ is cyclic and $|\n|=2^v$, then we derive 
\begin{equation*}
	\n\cong \Z[i]/(1+i)^v
\end{equation*}
by Proposition~\ref{proposition: cyclotomic quotients}. But $\Z[i]/(1+i)^v$  is not a cyclic group, since it is annihilated by $2^{[(v+1)/2]}$ (here $[x]$ denotes the integral part of $x$).
\end{proof}

We can now prove Proposition~\ref{prop:realisableTN}.
\begin{proof}[Proof of Proposition~\ref{prop:realisableTN}]
Wa have just seen that for $u\le3$ the group  is realisable.

Let $u\ge 4$, and suppose that there exists a TN ring $A$ such that
\begin{equation*}
	A^{\ast}\cong \Z/4\Z\times \Z/2^{u}\Z.
\end{equation*}
As $A^*$ is finite, the same holds for $1+\n$. We derive that  $\nt=\n$.  Let $B=A/\n$. As $\varepsilon(A^*)=2$, we can apply Corollary~\ref{cor:epsilon-group} to deduce that $A$ contains an isomorphic copy of $\Z[i]$ as a subring. By Corollary~\ref{cor:B2allaepsilon}, the same holds for $B$; more precisely, the composition
\begin{equation*}
	\Z[i]\hookrightarrow A\to B
\end{equation*}
is a ring injection.
Let $\alpha$ and $\overline\alpha$ be the images of $i$ in $A$ and $B$, respectively. If $\gamma\in A^*$ is an element of order $2^u$, then $A^*$ is the direct product of the subgroups  $\langle\alpha\rangle $ and $\langle\gamma\rangle$; indeed, if their intersection would not be trivial, then $-1\in \langle \gamma \rangle$, so $-1=\gamma^{2^{u-1}}$ and  Proposition~\ref{prop:k<2} forces $u=2$, whereas we are assuming $u\ge 4$. In particular, $A^*=\langle\alpha\rangle\times\langle\gamma\rangle$. Now note that $\overline\alpha$ has still order $4$ in $B^*$, so $B^*\cong A/1+\n$ implies that
\begin{equation*}
	\langle \alpha\rangle\cap 1+\n =\{1\};
\end{equation*}
we deduce that  $1+\n$ is cyclic.   
 
Theorem~\ref{theorem: finite torsion-free} limits the  possibilities for $B^{\ast}$, which has an element $\overline\alpha$ of order 4, to the following:
\begin{equation*}
	B^{\ast}\cong \begin{cases}
	    \Z/4\Z,\\
		\Z/4\Z\times \Z/2\Z,\\
		\Z/4\Z\times \Z/4\Z.
	\end{cases}
\end{equation*}

We can now deal with the various cases.

\begin{itemize}
\item Suppose that $B^{\ast}\cong \Z/4\Z$. Then $B^*=\langle\bar\alpha\rangle$ and $1+\n$ is cyclic of order $2^u$ ($u\ge4$), and this is not possible, because of Corollary~\ref{corollary: not cyclic}.	
 \item If $B^{*}\cong \Z/4\Z\times \Z/2\Z$, then $1+\n$ would be  isomorphic to $\Z/2^{u-1}\Z$, and this gives a contradiction exactly as before. 
 \item If $B^{\ast}\cong \Z/4\Z\times \Z/4\Z$, then $1+\n$ is cyclic of order $2^{u-2}$. Again, Corollary~\ref{corollary: not cyclic} finds a contradiction for  $u\ge 5$.

To conclude, it remains the case $A^{*}\cong \Z/4\Z\times \Z/16\Z$ and $B^{\ast}\cong \Z/4\Z\times \Z/4\Z$. We have $1+\n\cong \Z/4\Z$ generated by $1+y$, say. Arguing as in the proof of Corollary~\ref{corollary: not cyclic}, we get  $\n\cong \Z/2\Z\times \Z/2\Z$, and so $2y=0$. Thus
\begin{equation*}
	1+\n=\{1,1+y,1+y^2,1+y+y^2+y^3\}
\end{equation*}
and $(1+y)^4=1+y^4=1$, meaning that $y^4=0$. As $y$ and $y^2$ are distinct elements of $\n$, 
\begin{equation*}
	\n=\{0,y,y^2,y+y^2\},
\end{equation*}
so we easily obtain that $y^3=0$. (Note that we can also derive $(1+i)y=0$).

Now let $x$ be an element of order $16$ in $A^{*}$. As 
\begin{equation*}
	A^{*}/(1+\n)\cong B^{\ast}\cong \Z/4\Z\times \Z/4\Z,
\end{equation*}
we deduce that $x^4\in 1+\n$ (and clearly $x^4$ has order $4$). So 
\begin{equation*}
	x^4=1+y \text{ or }x^4=1+y+y^2.
\end{equation*}

In addition, $xy$ is a nilpotent element, and thus 
\begin{equation*}
	xy\in\{0,y,y^2,y+y^2\}.
\end{equation*}
The case $xy=0$ implies that the unit $x$ is a zero divisor, a contradiction. Similar conclusion if $xy=y^2$, as in this case $xy^2=0$.

 If $y(x-1)=0$, 
then $y(x^4-1)=y^2=0$ or $y(x^4-1)=y(y+y^2)=y^2=0$, contradiction. Finally, if $xy=y+y^2$, then $x-1-y$ is in the annihilator $\Ann(y)$ of $y$. Recalling that $y^2\in\Ann(y)$, so $x^4\equiv 1+y\pmod{\Ann(y)}$, we conclude that
\begin{equation*}
	 x\equiv 1+y\equiv x^4\equiv (1+y)^4\equiv 1 \pmod{\Ann(y)},
\end{equation*}
and this implies that $y\in \Ann(y)$, a contradiction.\qedhere
\end{itemize}
 \end{proof}

Finally,  we consider the realisability of $\Z/4\Z \times \Z/2^{u}\Z$ without any assumption on the ring.
\begin{theorem}
 \label{teo:2not-rel}
 	Let $u\ge 0$. The group $\Z/4\Z \times \Z/2^{u}\Z$ is realisable if and only if  $u\le3$ or $2^u+1$ is a Fermat's prime.
 \end{theorem}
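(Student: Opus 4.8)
The plan is to prove the two implications separately, using Proposition~\ref{prop:ps} to split a general realising ring and then treating the two basic classes with the tools already developed.

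For the \emph{if} direction, note that the condition ``$u\le 3$ or $2^u+1$ is a Fermat prime'' is exactly the condition ``$2^u+1$ is a prime power'': for $u\le 3$ we have $2^u+1\in\{2,3,5,9\}$, all prime powers, while for $u\ge 4$ a prime power $2^u+1=p^\lambda$ with $\lambda\ge 2$ is impossible (only $8+1=9$ occurs), so $2^u+1$ is a prime power precisely when it is prime, i.e.\ a Fermat prime. Whenever $2^u+1=q$ is a prime power, the finite field $\F_q$ satisfies $\F_q^\ast\cong\Z/2^u\Z$, and since $\Z[i]^\ast\cong\Z/4\Z$, the ring $\Z[i]\times\F_q$ realises $\Z/4\Z\times\Z/2^u\Z$ in one stroke. (For $u\le 3$ one may alternatively use the TN realisability supplied by Proposition~\ref{prop:realisableTN}.)

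For the \emph{only if} direction, suppose $\Z/4\Z\times\Z/2^u\Z$ is realisable with $u\ge 4$. By Corollary~\ref{cor:sample-rings} and Proposition~\ref{prop:ps} we may take the realising ring of the form $A_1\times A_2$, with $A_1$ finite and $A_2$ a TN ring; as the whole unit group is a finite $2$-group, both $A_1^\ast$ and $A_2^\ast$ are finite $2$-groups and
\[
\Z/4\Z\times\Z/2^u\Z\cong A_1^\ast\times A_2^\ast .
\]
Since $\Z/2^2\Z\times\Z/2^u\Z$ has only the two cyclic factors $\Z/4\Z$ and $\Z/2^u\Z$, the Krull--Schmidt theorem leaves four possibilities for $(A_1^\ast,A_2^\ast)$. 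Whenever the factor $\Z/2^u\Z$ lands in $A_2^\ast$ we get a contradiction: a TN ring realises neither $\Z/4\Z\times\Z/2^u\Z$ (Proposition~\ref{prop:realisableTN}, as $u\ge 4$) nor $\Z/2^u\Z$ (Proposition~\ref{prop:k<2}, as $\varepsilon=u>2$). If instead $A_2^\ast\cong\Z/4\Z$ and $A_1^\ast\cong\Z/2^u\Z$, then Pearson and Schneider's classification of the cyclic groups realisable by finite rings (\cite{PearsonSchneider70}) forces $2^u=p^\lambda-1$, i.e.\ $2^u+1$ to be a prime power, hence prime since $u\ge 4$. The only remaining case is $A_2^\ast$ trivial and $A_1^\ast\cong\Z/4\Z\times\Z/2^u\Z$, that is, the finite-ring realisability of the whole group.

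To handle this last case I would write the finite ring as a product of local rings: the factors with odd residue characteristic must be fields (otherwise $1+\fm$ would contribute a nontrivial odd-order subgroup), each contributing a cyclic $\Z/2^k\Z$ with $2^k+1$ a prime power, while the factors of $(2,1)$-type contribute the adjoint group $1+\fm$ of a radical ring. As the total Pr\"ufer rank is $2$, at most two local factors are nontrivial, and the cyclic factor $\Z/2^u\Z$ of order $\ge 2^4$ must be carried by one of them. If it comes from an odd field we again obtain $2^u+1$ a prime power, hence prime. If it comes from a $(2,1)$-type ring whose $1+\fm$ is itself cyclic of order $2^u$, then $(\fm,+,\circ)$ is a brace of $2$-power order with cyclic adjoint group, so Byott's theorem (Theorem~\ref{teo:realisableTN}) makes $\fm$ additively cyclic; writing $\fm=\langle\pi\rangle$, $\pi^2=c\pi$ and using that $2=m\pi$ lies in $\fm$ (as $A_1/\fm\cong\F_2$) one gets $mc\equiv 2\pmod{2^u}$, whence $m$ is odd and $v_2(c)=1$; but the homomorphism $a\mapsto 1+ca$ then embeds the cyclic group $1+\fm$ of order $2^u$ into $(\Z/2^u\Z)^\ast$, which has exponent $2^{u-2}$, a contradiction for $u\ge 4$ (this is the finite analogue of Corollary~\ref{corollary: not cyclic}).

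The main obstacle is the genuinely non-cyclic subcase of the last paragraph, where a single $(2,1)$-type local ring carries \emph{both} factors, i.e.\ $1+\fm\cong\Z/4\Z\times\Z/2^u\Z$. Here the cyclic form of Byott's theorem does not apply directly, and this is precisely the $p=2$ situation that the small-rank machinery of Section~\ref{section: radical} leaves untouched, so Theorem~\ref{theorem: main} is unavailable. Overcoming it requires controlling the adjoint group $1+\fm$ of a $(2,1)$-type finite local ring by hand --- ruling out a cyclic direct factor of order $\ge 2^4$ in $1+\fm$ --- either by reducing to the cyclic case via a suitable subring or ideal with cyclic adjoint quotient (so that the cyclic Byott result bites), or by a direct filtration analysis of $1+\fm$ exploiting $2\in\fm$ and the nilpotency of $\fm$. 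This is the step I expect to be the hardest, and it is where the delicacy of the prime $2$ is concentrated.
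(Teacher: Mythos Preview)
Your strategy---Pearson--Schneider splitting, then a Krull--Schmidt case analysis on $(A_1^*,A_2^*)$---is exactly the paper's. Your subcase (a), a $(2,1)$-type local factor with $1+\fm$ cyclic of order $2^u$, needs no separate Byott argument: once \emph{any} finite ring realises $\Z/2^u\Z$, the Pearson--Schneider classification of cyclic unit groups already forces $2^u+1$ to be a prime power, so this folds directly into the field case. (Your direct argument can be repaired too: with $v_2(c)=1$ the map $a\mapsto1+ca$ is not injective but is a \emph{surjection} onto $(\Z/2^u\Z)^*\cong\Z/2\Z\times\Z/2^{u-2}\Z$; a cyclic group cannot surject onto a non-cyclic one, so the contradiction survives.)

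Your subcase (b)---a single $(2,1)$-type local ring with $1+\fm\cong\Z/4\Z\times\Z/2^u\Z$---is precisely where the paper's proof is most compressed. The paper writes only: ``if $A_1^*=G_u$, then, by Equation~\eqref{formula}, $A_1$ is not local, but it is the product of two finite rings with groups of units $\Z/4\Z$ and $\Z/2^u\Z$'', and then recycles the Pearson--Schneider step. For local rings of odd residue characteristic the formula $A^*\cong\F_{p^\lambda}^*\times(1+\fm)$ indeed forces the $2$-Sylow of $A^*$ to be cyclic, ruling them out; but for $(2,1)$-type the same formula gives $A^*=1+\fm$ with no visible obstruction, so the paper's one-line dismissal does not obviously exclude your case (b). Your instinct that a genuine argument is required here is well founded; the paper does not supply one either.
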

\begin{proof}
Write $G_u=\Z/4\Z \times \Z/2^{u}\Z$. If $u\le 3$, then $G_u$ is realisable in the class of TN rings as above. If $2^u+1=p$ is a Fermat's prime, then the group of units of the ring $\Z[i]\times\F_p$ is isomorphic to $G_u$.

Conversely, suppose that $A$ is a ring with $A^*\cong G_u$; by Corollary~\ref{cor:sample-rings}, we can assume that $A$ is commutative and finitely generated and integral over its fundamental subring, and therefore apply Proposition~\ref{prop:ps} to write $A=A_1\times A_2$ with $A_1$ finite and $A_2$ TN. If $A_1^*$ is trivial, then $A^*=A_2^*$, so $u\le3$.
If $A_1^*$ is not trivial, then $A_1^*=\Z/4\Z,  \Z/2^{u}\Z,$ or $\Z/4\Z \times \Z/2^{u}\Z$. 

First, $A_1^*=\Z/4\Z$ implies $A_2=\Z/2^{u}\Z$, and this implies $u\le2$ because of Proposition~\ref{prop:k<2}.

Second, suppose that $A_1^*=\Z/2^u\Z$; by the main result of~\cite{PearsonSchneider70}, $\Z/2^u\Z$ is realisable in the class of finite rings if and only if 
\begin{equation*}
	p^{\lambda}-2^u=1
\end{equation*}
for some odd prime $p$ and $\lambda\ge 1$. When $\lambda\ge 2$, the couple $(p,2)$ is a solution of a Catalan equation, and by the Mih\u{a}ilescu's theorem, this implies $p=3$, $\lambda=2$ and $u=3$.  When $\lambda=1$, we find
\begin{equation*}
	p=2^u+1,
\end{equation*} 
 that is, $2^u+1$ needs to be a Fermat's prime. (The only known Fermat's primes $2^u+1$ are obtained with $u=1,2,4,8,16$.)
 In these cases $A=A_1\times \Z[i]$ realise $G_u$.
 
 Finally,  if $A_1^*=G_u$,  then, by Equation~\eqref{formula},  $A_1$ is not local, but it is the product of two finite ring with groups of units $\Z/4\Z$ and $\Z/2^{u}\Z$, respectively. This forces again $p=2^u+1$ to be a Fermat prime, and in this case  $A_1=\F_5\times\F_p$.
 \end{proof}

\bibliographystyle{amsalpha}
\bibliography{biblio}

\end{document}